\documentclass[a4paper,18pt]{article}
\oddsidemargin 0.1in
\evensidemargin 1.0in
\textwidth 6.0in
\headheight 0.01in
\topmargin 0.01in
\textheight 9.0in
\usepackage{amsmath}
\usepackage{indentfirst}
\usepackage[english]{babel}
\usepackage{graphics}
\usepackage{amssymb}
\usepackage{dsfont}
\usepackage{amsfonts}
\usepackage{amsthm}
\usepackage[dvips]{graphicx}
\usepackage[latin1]{inputenc}
\usepackage{enumerate}
\usepackage{amsxtra}
\usepackage{amstext}
\usepackage{amssymb}
\usepackage{latexsym}
\usepackage[pdftex,colorlinks]{hyperref}

\newcommand{\N}{\mathbb N}

\newcommand{\R}{\mathbb R}
\newcommand{\E}{\mathbb E}

\newcommand{\p}{\mathbb P}
\newtheorem{prop}{Proposition}[section]

\newtheorem{defi}[prop]{Definition}
\newtheorem{dfprop}[prop]{Definition/Proposition}
\newtheorem{lemme}[prop]{Lemma}
\newtheorem{remarque}[prop]{Remark}
\newtheorem{question}[prop]{Question}

\newtheorem{theo}[prop]{Theorem}

\newcommand{\Z}{\mathbb{Z}}

\setcounter{tocdepth}{1}
\title{Transience of algebraic varieties in linear groups - applications to  generic Zariski density}
\author{Richard Aoun \footnote{Laboratoire de Math\'ematiques, B\^atiment 425, Universit\'e Paris Sud 11, 91405 Orsay-
FRANCE, \newline E-mail: richard.aoun@math.u-psud.fr}}
\date{}
\begin{document}

\maketitle

\begin{abstract}
We study the transience of algebraic varieties in linear groups. In particular, we show that a ``non elementary''
 random walk in $SL_2(\R)$ escapes exponentially fast from every
proper algebraic  subvariety. We also treat the case where the random walk is on the real points of a semi-simple split algebraic group and show such a result   for a wide family of random walks.\\
As an application, we prove that generic subgroups (in some sense) of linear groups are Zariski dense.
\end{abstract}
%\keywords[english]{Transience, algebraic varieties, Zariski density, random matrix products, random walks, probability of return}
\tableofcontents

\section{Introduction}

One of the essential results in probability theory on groups is   Kesten's theorem \cite{Kesten}:  the probability of return to identity of a random walk  on a group $\Gamma$ decreases exponentially fast if and only if $\Gamma$ is non amenable.
A natural question is to extend this to other subsets: for which subsets does the random walk
escape with exponential rate? Many authors has studied the case where the subset is a subgroup of $\Gamma$: see for example \cite{em1}, \cite{bekk} and in particular \cite[Theorem 51]{lin} where it is shown that the probability that a random walk on  $\Gamma$ returns to a subgroup $H$ decreases exponentially fast to zero if and only if the Scheirer graph of $\Gamma /H$ is non amenable. \\
In this note we look at random walks on Zariski dense subgroups of algebraic groups (such as $SL_2(\R)$)  and we look at the escape from  proper algebraic subvarieties. Such questions have an interest in their own right since they allow us to study the delicate behavior
of the random walk  but they have also been recently involved in other domains such as the theory of expander graphs.
 We are referring here among others to the works of Bourgain and Gamburd
 \cite{bg1},\cite{bg}, Breuillard and Gamburd
\cite{gambbreui}  and Varju \cite{Varju}. In \cite{gambbreui} for instance
it is shown that there is an infinite set of primes $p$
  of density one, such that
the family of all Cayley graphs of $ SL_2(\Z/p\Z)$ is a family of
 expanders.
A crucial part of the proof is to take a random
walk on $ SL_2(\Z/p\Z)$ and to show that the probability of remaining in a subgroup
decreases exponentially fast to zero and uniformly.  In \cite[Corollary 1.1.]{bg} the following statement was established:
consider the group $SL_d(\Z)$ ($d\geq 2$), the
uniform probability measure on a finite symmetric generating set and $(S_n)_{n\in \N}$ the associated random walk, then for every proper algebraic  variety $\mathcal{V}$ of $SL_d(\mathbb{C})$,  $\p (S_n \in \mathcal{V}) $
decreases exponentially fast to zero. \\
Kowalski \cite{Kowalski} and Rivin \cite{Rivin} were interested in similar questions: for example they were able to estimate the probability that
a random walk in $SL_d(\Z)$  lies in the set of matrices with reducible characteristic polynomial. The techniques used by Kowalski and Rivin are  arithmetic sieving ones.\\
 %Moreover, the generalization of these results to any $\mathbf{G}(\Z)$ where $\mathbf{G}$ is an algebraic group has not been done yet. \\

In this article,  we develop  a more probabilistic approach allowing us to  deal with random walks on arbitrary Zariski dense subgroups of  semi-simple algebraic groups. In the particular case of $SL_2(\R)$, we obtain (see Theorem \ref{ch2tr}) that a random walk whose measure generates a non-elementary subgroup escapes with probability tending to one exponentially fast  from every algebraic variety.  Our method relies on the theory of random matrix products developed in the
60's by Kesten and Furstenberg and in the 70's-80's by the French school: in particular Bougerol, Guivarc'h, Le Page and Raugi.\\

We also apply our techniques to generic Zariski density. Let $\Gamma_1$ and $\Gamma_2$ be two Zariski dense subgroups of $SL_d(\R)$ ($d\geq 2$). We prove in Theorem \ref{ch2generic1} that one can exhibit a probability measure on each of the subgroups such that two independent random walks will eventually generate  a Zariski dense subgroup. We have proved in  \cite{aoun} that the latter subgroup is also free. This gives consequently a ``probabilistic'' version of the Tits alternative \cite{tits}.\\

%The second application is to expanders. We will look in particular at Varju's paper \cite{Varju} and see how one of
%its main theorems (Theorem 2)
%is related to Theorem \ref{theo1} in this paper. We will attempt (although we have not completely managed) to give an
%alternative proof
%of his statement. \\

 All the random variables will be defined on a probability space $(\Omega, \mathcal{F},\p)$, the symbol $\E$
 will refer to the expectation with respect to $\p$ and ``a.s.'' to almost surely. If $\Gamma$ is a topological group, $\mu$ a probability measure on $\Gamma$, we define a sequence of independent random variables $\{X_n; n\geq 0\}$ with the same law $\mu$. We denote for every $n\in \N^*$ by $S_n=X_n \cdots X_1$ the $n^{th}$ step of the random walk. \\
First let us present the result we obtain for $SL_2(\R)$. We will say that  a probability measure $\mu$ on $SL_2(\R)$ is non elementary if the group  generated by its support
 is non elementary, i.e. Zariski dense in $SL_2(\R)$ or equivalently not virtually solvable.
%We say that $\mu$ has an exponential moment if for some $\tau>0$, $\int{||g||^\tau d\mu(g)}< \infty $ ($||.||$ being any norm
% on $End(\R^2)$).. We obtain the following
\begin{theo} Let $\mu$ be a non elementary probability measure on $SL_2(\R)$ having an exponential moment (see Section \ref{ch2subprelprob} for a definition of this notion).
 Then for every proper algebraic subvariety $\mathcal{V}$ of $SL_2(\R)$, $$\displaystyle \limsup_{n\rightarrow \infty} \big[\p (S_n \in \mathcal{V})\big]^{\frac{1}{n}} < 1$$
 In particular, every proper  algebraic  subvariety is transient, that is a.s. $S_n$ leaves $\mathcal{V}$ after some time. \\
More precisely, if $P$ is a non constant polynomial equation in the entries of the $2\times 2$ matrices of $SL_2(\R)$, then there exists $\lambda>0$ such that:
$$\frac{1}{n} \log |P(S_n)| \underset{n\rightarrow \infty}{\overset{\textrm {a.s.}}{\longrightarrow}} \lambda$$
A large deviation inequality  holds as well:
 for every $\epsilon>0$:
\begin{equation}\displaystyle \limsup_{n\rightarrow \infty} \Big[\p \left( \big|\frac{1}{n} \log |P(S_n)| - \lambda\big| > \epsilon \right) \Big]^{\frac{1}{n}}< 1\label{ch2lgdevia}\end{equation}
\label{ch2tr}\end{theo}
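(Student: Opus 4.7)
First, I would reduce the containment statement to the large deviation estimate \eqref{ch2lgdevia}. If $\mathcal V$ is cut out by a non-constant polynomial $P$, then once the a.s.\ convergence $\frac{1}{n}\log|P(S_n)| \to \lambda > 0$ and \eqref{ch2lgdevia} are in hand,
$$\p(S_n \in \mathcal V) \le \p\bigl(P(S_n)=0\bigr) \le \p\Bigl(\tfrac{1}{n}\log|P(S_n)| < \tfrac{\lambda}{2}\Bigr)$$
decays exponentially by \eqref{ch2lgdevia} with $\epsilon=\lambda/2$, and almost sure transience is immediate from the a.s.\ convergence. So the core reduces to the statements concerning $|P(S_n)|$.

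\textbf{Linearization.} My plan is to interpret $P(S_n)$ as a matrix coefficient in a rational representation of $SL_2(\R)$. Let $V$ be the finite-dimensional $SL_2(\R)$-submodule of $\R[SL_2]$ generated by $P$ under right translation $R_g f(h):=f(hg)$; then $P(S_n) = \delta_e(R_{S_n}P)$, with $\delta_e$ the evaluation at the identity. By complete reducibility, $V$ decomposes as a direct sum of irreducibles, each isomorphic to a symmetric power $V_j := \mathrm{Sym}^j(\R^2)$; let $k \geq 1$ be the largest index such that the corresponding component $P^{(k)}$ of $P$ is non-zero ($k \geq 1$ because $P$ is non-constant). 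I would then verify, by right-translating any non-zero element of the $V_k$-isotypic component so as to make its value at $e$ non-zero, that $\delta_e$ is not identically zero on that component.

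\textbf{Large deviations for matrix coefficients.} The non-elementarity of $\mu$ propagates through each rational representation $\rho_j$: the induced action on $V_j$ is strongly irreducible and proximal, and the exponential moment passes over since $\rho_j$ is polynomial. I would then invoke the Le Page -- Guivarc'h--Raugi -- Bougerol--Lacroix large deviation theorem for matrix coefficients, which for all non-zero $v \in V_j$ and non-zero $\ell \in V_j^*$ gives
$$\limsup_{n\to\infty}\Bigl[\p\Bigl(\bigl|\tfrac{1}{n}\log|\ell(\rho_j(S_n)v)| - j\lambda_1\bigr| > \epsilon\Bigr)\Bigr]^{1/n} < 1,$$
where $\lambda_1=\lambda_1(\mu)>0$ is the top Lyapunov exponent of $\mu$ on $\R^2$ (positive by Furstenberg's theorem applied to the non-elementary $\mu$ with finite first moment). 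Taking $v=P^{(k)}$ and $\ell = \delta_e|_{V_k}$ on the dominant component yields two-sided exponential control $|\delta_e(R_{S_n}P^{(k)})| = e^{n(k\lambda_1+o(1))}$, while the lower components $P^{(j)}$ with $j<k$ contribute at most $e^{n(j\lambda_1+o(1))}$ (using either Le Page on $V_j$ or just $\|\rho_j(S_n)\| \le \|S_n\|^j$ together with a large deviation bound on $\|S_n\|$). Summing, $\tfrac{1}{n}\log|P(S_n)| \to k\lambda_1$ a.s.\ with the claimed large deviation rate, so $\lambda := k\lambda_1 > 0$.

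\textbf{Main obstacle.} The hard part will be the uniform large deviation estimate at the level of matrix coefficients in higher symmetric powers -- not merely on the norms of $S_n$; this is the technical heart of the Le Page-type theorems and is where the exponential moment assumption enters crucially. A secondary, more algebraic point is the identification of the dominant representation-theoretic degree $k$ and the non-vanishing of the evaluation functional $\delta_e$ on that component, which is what prevents the dominant term from cancelling asymptotically and allows $|P(S_n)|$ to inherit the full growth rate $k\lambda_1$.
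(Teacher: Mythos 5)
Your proposal is correct and follows essentially the same route as the paper: the paper proves the general Theorem \ref{ch2tr1} via linearization (Lemma \ref{ch2lemma}), identification of a dominant irreducible component, and the matrix-coefficient large deviation estimate (Theorem \ref{ch2theo1}), then deduces Theorem \ref{ch2tr} by noting that for $SL_2$ the Lyapunov exponents $L_{\rho_j(\mu)}=j\lambda_1$ of the symmetric powers are automatically distinct once $\lambda_1>0$ (Furstenberg), which is exactly the structure you spell out directly. One small caveat: the ``Le Page -- Guivarc'h--Raugi -- Bougerol--Lacroix large deviation theorem for matrix coefficients'' you invoke is not actually an off-the-shelf result but is what the paper itself establishes (Theorem \ref{ch2theo1} together with Proposition \ref{ch2propp}), assembling Le Page's norm-level large deviations with Guivarc'h's H\"older regularity of the stationary measure (Theorem \ref{ch2hausdorff}) and the asymptotic independence of the $K$-components of the Cartan decomposition (Theorem \ref{ch2independence}, from \cite{aoun}) -- so the obstacle you flag is indeed where most of the paper's work lies, and somewhat more than a citation is required to discharge it.
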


%By the Borel-Cantelli lemma, we have:
%\begin{corollaire}\label{lazem} For every ``non elementary'' random walk on $SL_2(\R)$,
%every algebraic proper subvariety of $SL_2(\R)$ is transient\end{corollaire}

%This result generalizes a well known fact in the particular case where the variety is a compact subgroup:
% indeed $SL_2(\R)$ being non amenable, every
%``non elementary'' random walk is transient and escapes from compact sets exponentially fast. When looking at compact
%groups, this is a particular case of our theorem because compact groups are algebraic (we're working in $\R$). \\
Theorem \ref{ch2tr} is in fact a particular case of a more general statement: Theorem \ref{ch2tr1} below.
If $G$ is the group of real points of an algebraic group $\mathbf{G}$, $m$ a Cartan projection (see Section \ref{ch2subprel}), $\mu$ a probability measure on $G$, then the Kingman subadditive ergodic theorem allows us to define a vector $Liap(\mu)$ (see Definition / Proposition \ref{ch2liapuvector}) in the Weyl chamber of $G$ which is the almost sure limit of $\frac{1}{n}m(S_n)$
.
 %then the Lyapunov vector
%Before stating it, we give some definitions.
%If $V$ is a vector space, $||.||$ a norm on $End(V)$,
%$\mu$  a probability on $GL(V)$, we define the Lyapunov exponent relative de $\mu$ by the following
% $\lim_{n\rightarrow \infty}\; {\frac{1}{n}\E (\log ||S_n||)}$ which  exists by simple application of the
% sub-additive lemma and is clearly independent of the chosen norm.
 % Now if $G=\mathbf{G}(\R)$ is the real points of a semi-simple algebraic group, $\Pi$ the set of simple roots,
 %$\alpha \in \Pi$,  $\rho_\alpha$ the associated fundamental representation,
 %$\mu$ a probability measure on $G$, we denote by $\lambda_\alpha$ the Lyapunov exponent relative to
 % the image measure $\rho_\alpha ( \mu)$. Finally a probability measure $\mu$ on a group $\Gamma$ is said to be adapted if $\Gamma_\mu=\Gamma$.
  %The Liapunov vector $Liap(\mu)$ is the vector in the Weyl chamber $\mathfrak{a}$ of $G$ satisfying
  %$\alpha\left( Liap(\mu) \right) = \lambda_\alpha$
%  We denote by $G_\mu$ the subgroup  of $GL(V)$ generated by the support of $\mu$.
\begin{theo}\label{ch2tr1} Let $\mathbf{G}$ be a  semi-simple algebraic group defined and split over $\R$\footnote{For example, $\mathbf{G}=\mathbf{SL}_d$, $d\geq 2$.}, $G=\mathbf{G}(\R)$ its group of  real points, $\Gamma$ a Zariski dense subgroup
of $G$, $\mathcal{V}$ a proper algebraic subvariety of $\mathbf{G}$ defined over $\R$, $\mu$ a probability on $G$ with  an exponential moment (see Section \ref{ch2subprelprob}) such that its support generates  $\Gamma$. Then, there exists a finite union of hyperplanes $H_1,\cdots, H_r$ in the Weyl chamber (see Section \ref{ch2subcartan}) depending only on $\mathcal{V}$ such that if $Liap(\mu)\not \in H_1 \cup \cdots \cup H_r$ then, \begin{equation}\displaystyle \limsup_{n\rightarrow \infty} \big[\p (S_n \in \mathcal{V})\big]^{\frac{1}{n}} < 1\label{ch2olli1}\end{equation}
Probability measures, whose support generates  $\Gamma$,  satisfying the condition $Liap(\mu) \not\in H_1 \cup \cdots H_r$ exist (See Lemma \ref{ch2lyapunovcone}). A large deviation inequality similar to (\ref{ch2lgdevia})
holds as well.
%More precisely, if $P_1,...,P_r$ are polynomials that vanish on $\mathcal{V}$, then there exists positive
%numbers $\lambda_1,...,\lambda_r$ such that for every $\epsilon>0$, there exists $\rho(\epsilon)\in ]0,1[$ such that
%\begin{equation}\p \Big[ \big|\frac{1}{n} \log |P_i(S_n)| -  \lambda_i\big|>\epsilon \big] \leq \rho(\epsilon)^n\;\;\;\;\;i=1,...,r\label{olli2}\end{equation}

%In fact, the previous statement is a particular case of the following: let $\mathcal{V}$ be a proper algebraic sub-variety of
%$\mathbf{G}$, then there exists integers $\{n_\alpha; \alpha \in \Pi\}$ such that every probability measure adapted on $\Gamma$ with an exponential moment such that $\sum_{\alpha \in \Pi} {n_\alpha  \lambda_\alpha} \neq 0 $ verifies conclusions (\ref{olli1}) and (\ref{olli2}). Such probability measures exist (See Lemma \ref{lyapunovcone}).
\end{theo}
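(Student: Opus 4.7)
\emph{Reduction and linearization.} Since $\mathcal{V}$ is proper, pick a nonzero regular function $P$ on $\mathbf{G}$ vanishing on $\mathcal{V}$; it suffices to establish the large deviation bound for $\frac{1}{n}\log|P(S_n)|$, of which (\ref{ch2olli1}) is an immediate consequence, since $\{S_n\in\mathcal{V}\}\subset\{|P(S_n)|\le e^{-\epsilon n}\}$ for every $\epsilon$ eventually. The finite-dimensional $\mathbf{G}$-submodule of $\R[\mathbf{G}]$ generated by $P$ under right translation yields, via Peter--Weyl, a rational representation $\rho:\mathbf{G}\to GL(V)$, a vector $v\in V$ and a linear form $\ell\in V^*$ such that $P(g)=\ell(\rho(g)v)$. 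Decompose $\rho=\bigoplus_{\lambda\in\Lambda_0}\rho_\lambda$ into irreducibles with distinct highest weights $\chi_\lambda$, retaining only those $\lambda$ for which both components $v_\lambda\in V_\lambda$ and $\ell_\lambda\in V_\lambda^*$ are nonzero, so that $P(g)=\sum_{\lambda\in\Lambda_0}\ell_\lambda(\rho_\lambda(g)v_\lambda)$.

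\emph{The hyperplane configuration.} Set
$$H_{\lambda,\lambda'}:=\{x:\langle\chi_\lambda-\chi_{\lambda'},x\rangle=0\}\;(\lambda\ne\lambda'\in\Lambda_0),\qquad H_\lambda:=\{x:\langle\chi_\lambda,x\rangle=0\}\;(\lambda\in\Lambda_0),$$
intersect these finitely many hyperplanes with the Weyl chamber, and label the result $H_1,\dots,H_r$; they depend only on $\Lambda_0$, and hence only on $\mathcal{V}$. When $Liap(\mu)\notin H_1\cup\cdots\cup H_r$ there is a \emph{unique} index $\lambda^*\in\Lambda_0$ realizing $\max_\lambda\langle\chi_\lambda,Liap(\mu)\rangle$, and this maximum is nonzero.

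\emph{Lyapunov asymptotics and large deviations.} By the Guivarc'h formula (Kingman's theorem combined with the theory of Cartan projections), for every rational representation $\tau$ of $\mathbf{G}$ with highest weight $\chi_\tau$ one has $\frac{1}{n}\log\|\tau(S_n)\|\to\langle\chi_\tau,Liap(\mu)\rangle$ almost surely. Zariski density of $\Gamma$ in the split semi-simple group $G$ implies that each $\rho_\lambda(\Gamma)$ acts strongly irreducibly and proximally on $V_\lambda$ (Goldsheid--Margulis, Benoist); the higher rank Le Page theorem (Bougerol--Lacroix, Guivarc'h--Raugi, Benoist--Quint), which exploits the exponential moment of $\mu$, then supplies a large deviation principle for $\frac{1}{n}\log\|\rho_\lambda(S_n)\|$ together with H\"older regularity of the unique $\mu$-stationary measure on $\mathbb{P}(V_\lambda)$. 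The regularity yields the crucial non-alignment estimate
$$\p\Big(|\ell_\lambda(\rho_\lambda(S_n)v_\lambda)|\le e^{-\epsilon n}\,\|\rho_\lambda(S_n)v_\lambda\|\Big)\le Ce^{-cn}\quad(\lambda\in\Lambda_0),$$
so that $\frac{1}{n}\log|\ell_\lambda(\rho_\lambda(S_n)v_\lambda)|$ itself satisfies a large deviation principle around $\langle\chi_\lambda,Liap(\mu)\rangle$. Outside an exponentially small event, the summand indexed by $\lambda^*$ therefore dominates in absolute value the sum of the others, so $|P(S_n)|$ inherits the exponent $\langle\chi_{\lambda^*},Liap(\mu)\rangle$, which proves both (\ref{ch2olli1}) and (\ref{ch2lgdevia}).

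\emph{Main obstacle.} The delicate point is the third step: one must simultaneously verify strong irreducibility and proximality for each of the finitely many subrepresentations $\rho_\lambda(\Gamma)$, apply the higher-rank Le Page large deviations with constants uniform across $\Lambda_0$, and exploit H\"older regularity of the stationary measures on each $\mathbb{P}(V_\lambda)$ to preclude accidental cancellations in the top mode. The splitness of $\mathbf{G}$ enters to make the highest weights, and hence the hyperplanes $H_i$, defined over $\R$, while the exponential moment of $\mu$ is essential for every large deviation input.
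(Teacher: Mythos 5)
Your proposal is correct and the overall strategy — linearize via the regular representation, isolate the irreducible components, single out the representation with the largest Lyapunov exponent via a hyperplane condition, and then use large deviations plus H\"older regularity of the stationary measure to prevent cancellation in the top mode — coincides with the paper's skeleton (Lemma~\ref{ch2lemma}, Lemma~\ref{ch2lyapunovcone}, Lemma~\ref{ch2split}, Theorem~\ref{ch2theo1}). The difference lies in how the non-degeneracy of the dominant summand is established. You bound $\p\big(|\ell_\lambda(\rho_\lambda(S_n)v_\lambda)|\le e^{-\epsilon n}\|\rho_\lambda(S_n)v_\lambda\|\big)$ directly, by combining the exponential speed of convergence of $\rho_\lambda(S_n)[v_\lambda]$ toward the stationary measure with Guivarc'h's H\"older regularity (the quantitative content of Lemma~\ref{ch2hyperplane}). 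This is legitimate because the linearization from $\R[\mathbf{G}]$ naturally produces a rank-one linear condition on each $V_\lambda$. The paper instead proves the stronger Theorem~\ref{ch2theo1}, valid for an arbitrary endomorphism $A\in \bigoplus_i End(V_i)$, which forces it to work inside the Cartan decomposition: it factors out $\chi_{\rho_1}(A_n)$, uses the ratio estimate of Theorem~\ref{ch2ratioA} to control subdominant weight spaces, and then needs the asymptotic independence of the two $K$-components (Theorem~\ref{ch2independence}) in Proposition~\ref{ch2propp} to show that $|\langle K_n\cdot v_{\rho_1},A^tU_n^{-1}\cdot v_{\rho_1}\rangle|$ is not exponentially small. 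Your route avoids Theorem~\ref{ch2independence} entirely, which is a genuine simplification for the rank-one situation that arises here, at the cost of not yielding the more general Theorem~\ref{ch2theo1}. Two small remarks: the hyperplanes $H_\lambda=\{\chi_\lambda=0\}$ are redundant since $Liap(\mu)$ lies in the open Weyl chamber (Theorem~\ref{ch2guimo}), so the dominant weight is automatically nontrivial; and for the scalar large deviations on $\frac{1}{n}\log\|\rho_\lambda(S_n)\|$, the classical Le Page theorem in $GL(V_\lambda)$ (Theorem~\ref{ch2page}) suffices once strong irreducibility and proximality of $\rho_\lambda(\Gamma)$ are in hand, so no higher-rank machinery is needed.
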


%A similar corollary to (\ref{lazem}) holds.
 Theorem \ref{ch2tr1} clearly implies Theorem \ref{ch2tr}: indeed, everything we want to show is that the Lyapunov exponent
associated to $\mu$ (see Definition \ref{ch2liapou}) is non zero (positive). This is ensured by Furstenberg's  theorem \cite{Furst}. \\

\begin{remarque} The number $\lambda$ that appears in Theorem \ref{ch2tr} or \ref{ch2tr1},
should be seen as a generalization of the classical Lyapunov exponent (see Definition \ref{ch2liapou}). In fact, it will be the Lyapunov exponent relative to the probability measure $\rho(\mu)$ where $\rho$ is some rational representation of $\mathbf{G}$.
\end{remarque}

\begin{remarque}
Our method doesn't allow us to estimate $\p(S_n\in \mathcal{V})$ when $Liap(\mu)$ belongs to the finite union of hyperplanes $H_i$ defined by the variety $\mathcal{V}$.  Example 2 of Section \ref{ch2example} illustrates this. \end{remarque}

Let us justify why we will look at the escape from algebraic subvarieties  and not from
$C^1$ submanifolds for instance.  Kac and Vinberg  proved in   \cite{vinberg} (see also \cite{benoist}) that there exist discrete Zariski dense subgroups of
$SL_3(\R)$ preserving a $C^1$ (but not algebraic) manifold on the projective plane (in fact, such manifolds are obtained as the
boundary of a divisible convex in $P^2(\R)$). Let $\Gamma$ be such a group, $\mathcal{C}$ such a manifold
and $\mathcal{V}=\{x\in \R^3\setminus\{0\}; [x] \in \mathcal{C}\}\cup\{0\}$ where $[x]$ denotes the projection of $x\neq 0$
on $P^2(\R)$. Note that $\mathcal{V}$ is differentiable outside $0$.
Then, for every $x\in \mathcal{V}$, every $n\in \N$, $\p (S_n x \in \mathcal{V}) = 1$. By way of contrast, we  show in the following statement that for proper algebraic  subvarieties the latter quantity decreases exponentially fast to zero.\\

 \begin{theo}\label{ch2tr2} Let $\Gamma$ be a Zariski dense subgroup of $SL_d(\R)$ ($d\geq 2$), $\mu$ a probability measure with an exponential moment whose support generates  $\Gamma$. Then for every proper algebraic subvariety $\mathcal{V}$ of $\R^d$, every non zero vector $x$ of $\R^d$ we have:

$$\displaystyle \limsup_{n\rightarrow \infty} \big[\p (S_n x \in \mathcal{V})\big]^{\frac{1}{n}} < 1$$\end{theo}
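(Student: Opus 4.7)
The plan is to reduce the statement to a large deviation estimate for the absolute value of a linear functional applied to the random walk in a symmetric power representation, where positivity of the top Lyapunov exponent can be exploited. I first pick a non-zero polynomial $P:\R^d\to\R$ vanishing on $\mathcal{V}$ (which exists because $\mathcal{V}\subsetneq \R^d$) and decompose it into its homogeneous components $P=\sum_{i=0}^{k}P_i$ with $P_k\neq 0$. Since $\{S_n x\in \mathcal{V}\}\subseteq \{P(S_n x)=0\}$, it suffices to bound $\p(P(S_n x)=0)$. Let $\lambda_1$ denote the top Lyapunov exponent of $\mu$ on $\R^d$; Furstenberg's theorem applies since $\Gamma$ is Zariski dense, hence non-elementary, in $SL_d(\R)$, and yields $\lambda_1>0$. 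Le Page's large deviation inequality on $\tfrac{1}{n}\log\|S_n\|$ then controls the lower-degree terms: $|P_i(S_n x)|\leq \|P_i\|\,\|x\|^i \|S_n\|^i\leq e^{i(\lambda_1+\varepsilon)n}$ for each $i<k$, outside an exponentially rare event.

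The core step is a matching lower bound for $P_k(S_n x)$. Consider the irreducible and proximal representation $\rho_k:SL_d(\R)\to GL(\mathrm{Sym}^k(\R^d))$; its highest weight space is one-dimensional, so $\rho_k$ is proximal, and the strong irreducibility for $\rho_k(\Gamma)$ follows from Zariski density of $\rho_k(\Gamma)$ in the Zariski-connected group $\rho_k(SL_d(\R))$. The pushforward $\rho_k(\mu)$ keeps an exponential moment because $\rho_k$ is a polynomial map. The polynomial $P_k$ identifies with a non-zero linear form $L$ on $\mathrm{Sym}^k(\R^d)$, the vector $v:=x^{\otimes k}$ is non-zero in $\mathrm{Sym}^k(\R^d)$, and $P_k(S_n x)=L(\rho_k(S_n)\,v)$. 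The Le Page / Goldsheid--Margulis / Guivarc'h large deviation principle for coefficients of strongly irreducible proximal random matrix products then yields
$$\p\bigl(|P_k(S_n x)|<e^{(k\lambda_1-\varepsilon)n}\bigr)\leq C\,e^{-cn},$$
using that the top Lyapunov exponent of $\rho_k(\mu)$ equals $k\lambda_1$ (the highest weight of $\rho_k$ being $k\omega_1$).

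Choosing $\varepsilon$ small enough that $k\lambda_1-\varepsilon>(k-1)(\lambda_1+\varepsilon)$ (possible since $\lambda_1>0$), the top-degree term dominates the sum of the lower-degree ones on the intersection of the above good events, so $|P(S_n x)|\geq \tfrac{1}{2}|P_k(S_n x)|>0$ off an exponentially negligible event; in particular $S_n x\notin \mathcal{V}$ with exponentially high probability, which is the conclusion. The main technical hurdle is the coefficient large deviation for $\rho_k$: it reduces to the H\"older regularity of the stationary measure $\nu$ of $\rho_k(\mu)$ on $P(\mathrm{Sym}^k(\R^d))$, namely $\nu\{[w]:|L(w)|<\eta\|w\|\}\leq C\eta^\alpha$, a standard consequence of Le Page's framework once strong irreducibility, proximality, and the exponential moment for $\rho_k(\mu)$ are in hand.
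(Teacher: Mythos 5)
Your proof is correct, and it follows the same overall strategy as the paper: expand $P$ into homogeneous components, pass to the symmetric-power representations $\rho_i = \mathrm{Sym}^i$, observe that their top Lyapunov exponents $i\lambda_1$ are strictly increasing in $i$ because $\lambda_1>0$, and conclude that the top-degree term dominates with exponentially high probability. The paper's version of this is packaged through Lemma~\ref{ch2lemma}, Remark~\ref{ch2nec} and the general Theorem~\ref{ch2theo1} applied to $\mu^{-1}$ on the polynomial representation $\oplus_{i\le k}\mathrm{Sym}^i(\R^d)$.

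Where you genuinely diverge, and in a way worth pointing out, is in the technical core. The paper proves Theorem~\ref{ch2theo1} for an arbitrary endomorphism $A$, which forces it to control $\langle K_n\cdot v_\rho,\,A^t U_n^{-1}\cdot v_\rho\rangle$ and therefore to invoke the asymptotic independence of the two $K$-components (Theorem~\ref{ch2independence}) together with Guivarc'h's regularity. You avoid this entirely by exploiting the special, rank-one structure of the present problem: $P_k(S_n x)=L\big(\rho_k(S_n)\,x^{\otimes k}\big)$ is a \emph{vector} coefficient, not a general trace $\mathrm{Tr}(\rho(S_n)A)$. As a result, the large-deviation lower bound you need reduces to Le Page's large deviations for $\tfrac1n\log\|\rho_k(S_n)v\|$, the exponential equidistribution of $\rho_k(S_n)[v]$ in $P(\mathrm{Sym}^k(\R^d))$, and the H\"older regularity of the stationary measure (Theorem~\ref{ch2hausdorff}); the asymptotic independence statement is not needed. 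You also skip the linearization detour through $\mathcal V=\{g: gx\in\widetilde{\mathcal V}\}$ and the $\mu^{-1}$ reversal. So the same ingredients drive both proofs, but yours isolates a simpler special case of Theorem~\ref{ch2theo1} and is lighter for that reason; what you lose is the uniform packaging that the paper reuses for Theorems~\ref{ch2tr1} and~\ref{ch2generic1}. One small point to make explicit if you write it up: the choice of $\varepsilon$ must satisfy $k\lambda_1-\varepsilon>(k-1)(\lambda_1+\varepsilon)$, i.e.\ $\varepsilon<\lambda_1/(2k-1)$, and also be small enough for the equidistribution/regularity step to yield an exponential rate; both constraints are compatible precisely because $\lambda_1>0$.
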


As discussed at the beginning of the introduction, it is interesting to study the transience of proper subgroups. It follows from Varju's paper  (see \cite[Propositions 8 and 9]{Varju}) that if $\mathbf{E}$ is a simple algebraic group defined over $\R$, $\mathbf{G}$ the direct product of $r$ copies of $\mathbf{E}$ (with $r\in \N^*$), $\Gamma$ a Zariski dense subgroup of $G=\mathbf{G}(\R)$, then there exists a symmetric probability measure $\mu$ on $\Gamma$  whose support generates $\Gamma$ such that the probability that the associated random walk  escapes from a proper algebraic  subgroup decreases exponentially fast to zero.

%Looking in particular at the transience of proper subgroups is interesting from the discussion at the beginning of the introduction.
%Moreover, the bound obtained is uniform over the subgroups. \\
%As a corollary of Theorem \ref{tr1}, we will prove (Corollary \ref{subg} below) the same statement as Varju's one
 % for a general semi-simple algebraic group  $\mathbf{G}$. The only difference is that the measure $\mu$ may not be symmetric.

%\begin{corollaire}\label{subg}  Let $\G$ be a semi-simple algebraic group, $G$ its group of real points, $\Gamma$ a Zariski dense subgroup of $G$. Then there exists a probability measure $\mu$ adapted on $\Gamma$, a constant $c\in ]0,1[$ such that:

%$$\underset{\textrm{$\mathbf{H}$ algebraic subgroup of $\G$}}{Sup}\;\p (S_n \in H)\leq c^n$$ where $H$ is the group of real points of $\mathbf{H}$. \end{corollaire}

We will show that this in fact holds for all probability measures with an exponential moment whose support generates $\Gamma$ and for every semi-simple algebraic group $\mathbf{G}$, namely:
\begin{theo}\label{ch2tr3} Let $\mathbf{G}$ be a semi-simple algebraic group defined over $\R$,  $G$ its group of real points assumed without compact factors, $\Gamma$ a Zariski dense subgroup of $G$ and $\mu$  a probability measure  with an exponential
moment whose support generates $\Gamma$. Then for every proper algebraic subgroup $\mathbf{H}$ of $\mathbf{G}$,
$$\displaystyle \limsup_{n\rightarrow \infty} \big[\p (S_n \in H)\big]^{\frac{1}{n}} < 1$$
where $H$ is the group of real points of $\mathbf{H}$. \end{theo}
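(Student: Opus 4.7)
The strategy is to use Chevalley's theorem to realize $\mathbf{H}$ as a projective stabilizer and then apply random matrix product theory on an irreducible factor. By Chevalley, there exist a finite-dimensional rational representation $\rho: \mathbf{G} \to GL(V)$ defined over $\R$ and a nonzero vector $v \in V$ (also defined over $\R$) such that $\mathbf{H} = \{g \in \mathbf{G} : \rho(g) v \in \C^* v\}$; hence $\{S_n \in H\} = \{\rho(S_n)[v] = [v]\}$ and the problem transfers to a question about the action of $\rho(S_n)$ on $\mathbb{P}(V)$.

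Decomposing $V = V_1 \oplus \cdots \oplus V_k$ into $\rho(\mathbf{G})$-irreducible subrepresentations over $\R$ (complete reducibility), write $v = v_1 + \cdots + v_k$. The equation $\rho(S_n) v = \lambda_n v$ splits componentwise and forces $\rho(S_n) v_i = \lambda_n v_i$ for every $i$ with $v_i \neq 0$. Because a semi-simple group has no nontrivial characters and $\mathbf{H}$ is proper, some index $i_0$ satisfies $v_{i_0} \neq 0$ with $V_{i_0}$ a nontrivial irreducible representation; in particular $\dim V_{i_0} \geq 2$. Choosing a nonzero linear form $\ell$ on $V_{i_0}$ with $\ell(v_{i_0}) = 0$, the event $\rho(S_n) v_{i_0} \in \R v_{i_0}$ is contained in $\{\ell(\rho(S_n) v_{i_0}) = 0\}$, so it suffices to bound this latter probability exponentially.

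The pushed-forward measure $\rho(\mu)$ inherits an exponential moment from $\mu$ (since $\rho$ is polynomial in coordinates), and $\rho(\Gamma)$ is Zariski dense in $\rho(\mathbf{G})$, acting irreducibly (in fact strongly irreducibly by Zariski density) on $V_{i_0}$. One then replaces $V_{i_0}$ by an appropriate exterior power $\wedge^r V_{i_0}$ on which $\rho(\mathbf{G})$ acts proximally (such an $r$ exists precisely because $G$ has no compact factors, which provides proximal elements in the Zariski closure), transfers the event $\{\ell(\rho(S_n) v_{i_0}) = 0\}$ into an analogous hyperplane event in $\wedge^r V_{i_0}$, and applies the Le Page / Guivarc'h large deviations principle for random matrix products on strongly irreducible proximal representations. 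This yields the required exponential decay, with positivity of the top Lyapunov exponent on $\wedge^r V_{i_0}$ guaranteed by the simplicity of the Lyapunov spectrum for Zariski dense random walks (Goldsheid-Margulis / Guivarc'h) together with the fact that $\rho(\mathbf{G}) \subset SL(V_{i_0})$ forces the exponents to sum to zero.

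The most delicate point is the proximal reduction just described: identifying an exterior power that simultaneously preserves the ``fixing-a-line'' event and produces a proximal representation of $\rho(\mathbf{G})$, and verifying that the large deviations estimate on the exterior power genuinely bounds the original atomic event $\{\ell(\rho(S_n) v_{i_0}) = 0\}$. The hypothesis that $G$ has no compact factors is used in an essential way at this step, since compact factors would obstruct the existence of the required proximal elements in the Zariski closure of $\rho(\Gamma)$.
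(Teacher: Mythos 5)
Your proof takes a genuinely different route from the paper's. You invoke Chevalley's theorem directly to realize $\mathbf{H}$ as the stabilizer of a line, split $\rho(S_n)v=\lambda_n v$ componentwise across the irreducible factors of $V$, isolate a nontrivial factor $V_{i_0}$, and reduce to bounding a hyperplane event for $\rho_{i_0}(S_n)v_{i_0}$. The paper's proof instead distinguishes two cases according to whether $\mathbf{H}^0$ is reductive. In the reductive case it uses Matsushima's criterion (Proposition \ref{ch2reductive}) to obtain a fixed \emph{vector} (not merely a line) in an irreducible representation, and then needs only Furstenberg's positivity of the top Lyapunov exponent: since $\|\rho(S_n)x\|$ grows exponentially a.s., the event $\|\rho(S_n)x\|=\|x\|$ has exponentially small probability, with no proximality required. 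In the non-reductive case it embeds $\mathbf{H}^0$ into a standard parabolic $\mathbf{P}_\theta$ and works with a fundamental-weight representation $\rho_\alpha$ (Proposition \ref{ch2tits}, Lemma \ref{ch2lemmepar}), which is automatically strongly irreducible \emph{and proximal}, so Lemma \ref{ch2hyperplane} applies outright. Both cases are arranged precisely so that proximality is either unnecessary or automatic.

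Your argument has a genuine gap exactly at the point you flag as delicate. The factor $\rho_{i_0}$ need not be a proximal representation of $\mathbf{G}$ (Lemma \ref{ch2split} guarantees this only for split $\mathbf{G}$, and Theorem \ref{ch2tr3} makes no splitness assumption), whereas every hyperplane-avoidance tool the paper develops (Lemma \ref{ch2hyperplane}, Proposition \ref{ch2propp}, Theorem \ref{ch2hausdorff}, Theorem \ref{ch2independence}) requires $G_\mu$ to act strongly irreducibly \emph{and proximally}. Your proposed remedy, passing to $\wedge^r V_{i_0}$, is not carried out and has two unresolved obstacles. First, the event you want to bound, $\{\ell(\rho_{i_0}(S_n)v_{i_0})=0\}$, is a hyperplane event in $V_{i_0}$ and does not transfer to a hyperplane event in $\wedge^r V_{i_0}$ by any mechanism you describe; the condition that does transfer is the stronger one $\{\rho_{i_0}(S_n)v_{i_0}\in\R v_{i_0}\}$, which forces $\wedge^r\rho_{i_0}(S_n)(v_{i_0}\wedge w)$ into the proper subspace $v_{i_0}\wedge\bigwedge^{r-1}V_{i_0}$, but you would then have to restart from that event rather than from the linear form $\ell$. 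Second, and more seriously, $\wedge^r V_{i_0}$ is in general a \emph{reducible} $\mathbf{G}$-module, so $\wedge^r\rho_{i_0}(\Gamma)$ fails to be strongly irreducible on it and Lemma \ref{ch2hyperplane} does not apply; one would have to restrict to the appropriate irreducible submodule and verify that both the vector $v_{i_0}\wedge w$ and the hyperplane have nontrivial intersection with it. None of this is routine, and it is exactly what the paper's reductive/non-reductive dichotomy is designed to avoid. Also, minor: the positivity of the top Lyapunov exponent of $\rho_{i_0}(\mu)$ should be attributed to Furstenberg's theorem (non-compactness of $\rho_{i_0}(G_\mu)$, which uses the no-compact-factors hypothesis), not to simplicity of the Lyapunov spectrum, which may fail for $\rho_{i_0}$ precisely when it is non-proximal.
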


The bound obtained by Varju is uniform over the subgroups. Unfortunately our bound in Theorem \ref{ch2tr3} is not.\\

Our estimates will be applied to show that Zariski density in linear groups is generic in the following sense:

\begin{theo}\label{ch2generic11} Let $G$ be the group of  real points of a semi-simple algebraic group split over $\R$. Let $\Gamma_1,\Gamma_2$ be two Zariski dense subgroups of $G$.
 Then there exist probability measures $\mu_1$ and $\mu_2$ with an exponential moment whose support generate respectively  $\Gamma_1$ and $\Gamma_2$  such that for some $c\in ]0,1[$ and all large $n$,
 $$\p (\textrm{$\langle S_{1,n},S_{2,n}\rangle$ is  Zariski dense and free}) \geq 1-c^n$$ where  $\{S_{2,n}; n \geq 0\}$
and  $\{S_{2,n}, n \geq 0\}$ are two independent random walks on $\Gamma_1$ (resp. $\Gamma_2$) associated respectively to
$\mu_1$ and $\mu_2$ on $\Gamma_1$ (resp. $\Gamma_2$). This implies that almost surely, for $n$ big enough, the subgroup
 $\langle S_{1,n},S_{2,n}\rangle$ is Zariski dense and free.
\end{theo}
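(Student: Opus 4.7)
The plan is to reduce the statement to Theorem \ref{ch2tr1} applied to the product group $\mathbf{G}\times\mathbf{G}$, which is again semi-simple and split over $\R$, viewing the pair $(S_{1,n},S_{2,n})$ as the random walk on $G\times G$ driven by the product measure $\mu_1\otimes\mu_2$. The heart of the matter is a purely algebraic statement: there exists a proper algebraic subvariety $W$ of $\mathbf{G}\times\mathbf{G}$, defined over $\R$, such that whenever $(g_1,g_2)\notin W$ the subgroup $\langle g_1,g_2\rangle$ is Zariski dense in $\mathbf{G}$. Granted this, the Zariski density part of the theorem is a direct application of the transience estimate of Theorem \ref{ch2tr1}.

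To build $W$, I would use that $\mathbf{G}$ is Zariski connected, so a subgroup is Zariski dense iff the dimension of its Zariski closure reaches $\dim\mathbf{G}$. The function $(g_1,g_2)\mapsto \dim\overline{\langle g_1,g_2\rangle}^{Z}$ is upper semicontinuous in the Zariski topology on $\mathbf{G}\times\mathbf{G}$: by noetherianity, each sublevel set is a finite intersection of subvarieties cut out by word relations forcing specified algebraic constraints. Consequently the locus where the dimension equals $\dim\mathbf{G}$ is Zariski open; it is non-empty because semi-simple groups contain pairs generating Zariski dense free subgroups (classical, via the Tits alternative together with a density argument). Its complement, which can be taken defined over $\R$, is a proper algebraic subvariety $W$ of $\mathbf{G}\times\mathbf{G}$.

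Next, choose $\mu_1,\mu_2$ with exponential moments whose supports generate $\Gamma_1$ and $\Gamma_2$ and each contain the identity (obtained if necessary by mixing with $\delta_{e}$, which does not affect the generated group nor the exponential moment). Then $\mathrm{supp}(\mu_1\otimes\mu_2)\supset (\{e\}\times\mathrm{supp}(\mu_2))\cup(\mathrm{supp}(\mu_1)\times\{e\})$ generates $\Gamma_1\times\Gamma_2$, which is Zariski dense in $\mathbf{G}\times\mathbf{G}$. Since a Cartan projection on $G\times G$ splits coordinatewise, $Liap(\mu_1\otimes\mu_2)=(Liap(\mu_1),Liap(\mu_2))$ in the Weyl chamber $\mathfrak a^{+}\times\mathfrak a^{+}$. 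The finite family of hyperplanes produced by Theorem \ref{ch2tr1} for the variety $W$ is a finite union of proper affine subspaces of this product chamber; applying Lemma \ref{ch2lyapunovcone} separately to $\mu_1$ and $\mu_2$, the pair $(Liap(\mu_1),Liap(\mu_2))$ can be forced to avoid this union. Theorem \ref{ch2tr1} then yields $\limsup_{n}\p\bigl((S_{1,n},S_{2,n})\in W\bigr)^{1/n}<1$, so the Zariski density of $\langle S_{1,n},S_{2,n}\rangle$ holds with probability at least $1-c^{n}$ for some $c\in\,]0,1[$.

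The freeness is handled by invoking \cite{aoun}, which gives the analogous exponential decay for the probability that $\langle S_{1,n},S_{2,n}\rangle$ fails to be free; a union bound with the previous estimate delivers the stated lower bound, and Borel--Cantelli promotes it to the almost sure conclusion. The main obstacle I anticipate is the construction of $W$ --- carefully justifying upper semicontinuity of the Zariski-closure dimension and ensuring $W$ is defined over $\R$ --- together with the routine but necessary verification that the bad hyperplanes from Theorem \ref{ch2tr1} are not all forced to contain every realisable Lyapunov pair, a point which ultimately reduces, factor by factor, to Lemma \ref{ch2lyapunovcone}.
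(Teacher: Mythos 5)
Your overall strategy is the paper's: linearize a proper subvariety $W\subset\mathbf G\times\mathbf G$ whose complement consists of Zariski-dense-generating pairs, choose $\mu_1,\mu_2$ so that $Liap(\mu_1\otimes\mu_2)$ avoids the bad hyperplanes, apply the transience estimate, and import freeness from \cite{aoun}.

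The genuine gap is the construction of $W$. You treat it as a routine consequence of Noetherianity plus ``upper semicontinuity'' of $(g_1,g_2)\mapsto\dim\overline{\langle g_1,g_2\rangle}$, but this is exactly the nontrivial content of Breuillard's strong Tits alternative (Lemma 6.8 of \cite{strongtits}, quoted in the paper as Lemma \ref{ch2strongtits}). It is not clear that the sublevel sets $\{\dim\overline{\langle g_1,g_2\rangle}\le d\}$ are Zariski closed rather than merely constructible: the Zariski closure of a finitely generated subgroup is not the vanishing locus of any fixed finite set of ``word relations,'' and the number of words needed to pin down the closure varies with the pair. Proving that the non-Zariski-dense locus is closed (and, more to the point, \emph{proper}) requires genuine input --- conjugacy-finiteness of maximal subgroups and a compactness/specialization argument --- which is what \cite{strongtits} supplies. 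Your sketch does not establish it, and you in fact flag this as your main worry; the resolution is simply to cite the lemma, as the paper does.

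A smaller imprecision: applying Lemma \ref{ch2lyapunovcone} ``separately'' to $\mu_1$ and $\mu_2$ does not directly handle hyperplanes in $\mathfrak a^+\times\mathfrak a^+$ that couple the two coordinates (e.g.\ loci of the form $f(x)=g(y)$). The paper instead runs the \emph{proof} of Lemma \ref{ch2lyapunovcone} on the Zariski dense subgroup $\Gamma_1\times\Gamma_2$ of $G\times G$, using that the Jordan projection of $G\times G$ splits coordinatewise and Benoist's theorem gives a limit cone with nonempty interior for each factor; this produces a Dirac mass $\delta_{(g_1,g_2)}=\delta_{g_1}\otimes\delta_{g_2}$ off the hyperplane arrangement, which one then perturbs separately in each factor. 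Your argument can be repaired along these lines (fix $\mu_2$ first, then choose $\mu_1$), but as written the ``separate'' application is not justified.
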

See Section \ref{ch2subzariski} for the comparison of these results with Rivin's  in \cite{genericrivin}.

\begin{remarque} The fact that $\{w\in \Omega; \langle M_n(w), {M_n}'(w) \rangle \textrm{\;is Zariski dense}\}$ is measurable will follow from Lemma \ref{ch2strongtits}. \end{remarque}
\subsection{Outline of the paper}
In order to prove Theorem \ref{ch2tr1} (or \ref{ch2tr2}, \ref{ch2tr3}),
one can clearly suppose that $\mathcal{V}$ is a proper hypersurface
(i.e. the common zeroes of one polynomial equation). We will do so in
all the paper.\\

In Section \ref{ch2example}, we provide two examples to explain the general idea of the proofs.\\

Section \ref{ch2sublinear} is purely algebraic. To every proper algebraic hypersurface $\mathcal{V}$ of $\mathbf{G}$ we associate a rational real representation $\rho$ of $\mathbf{G}$ such that $g\in \mathcal{V}$ is equivalent to: the matrix coefficients of
$\rho(g)$ satisfy a linear condition ``$(L)$''. Thus we have ``linearized'' our variety. This can
 be seen as a generalization of the well-known Chevalley theorem
(Theorem \ref{ch2chevalley}) concerning the particular case of subgroups.
%We show that $\mathcal{V}$ is proper if and only ``(L)'' is non trivial. \\

In Section \ref{ch2subprel} we recall standard facts about semi-simple algebraic groups and their rational representations.\\

In Section \ref{ch2subproba} we give some additional results to the theory of random matrix products. They  will be used in Section \ref{ch2subproof1} in order to show that $\rho(S_n)$ may verify $(L)$ only with a probability decreasing exponentially in $n$.\\

We consider a random walk on a Zariski dense subgroup $\Gamma$ of the real points of a semi-simple algebraic group. First we define the Lyapunov vector, which is the normalized Cartan projection of the random walk. We recall in Theorem \ref{ch2guimo} that it belongs to the interior of the Weyl chamber. In lemma \ref{ch2lyapunovcone}, we show that for every finite union of hyperplanes in the Weyl chamber, one can always find a probability measure whose support generates  $\Gamma$ such that the Lyapunov vector does not belong to this union (this is the condition stated in Theorem \ref{ch2tr1}).

Next, we will be interested in the behavior of the components of the random walk in the Cartan decomposition. In Theorems \ref{ch2ratioA} and \ref{ch2conv}, we give  new and shorter proof  of the exponential convergence in the $KAK$ decomposition we obtained in our previous work \cite{aoun}. Unlike \cite{aoun} when we were working on an arbitrary local field, we will take advantage during the proofs of the fact that our matrices are real valued.

Theorem \ref{ch2ratioA} shows the exponential decay of the ratio between the first two $A$-components of the random walk in
the KAK decomposition. This is a version in expectation of the fact that the Lyapunov vector  belongs to the interior of the Weyl chamber. The proof  will follow easily from a large deviations theorem of Le Page in $GL_d(\R)$. We note that we proved a similar result in \cite{aoun} but with different techniques, the reason is that a large deviation result over an arbitrary local field is not present in the literature. \\

Theorem  \ref{ch2conv} establishes the exponential convergence of the $K$-parts.\\

%Theorems \ref{independence} (asymptotic independence of the K-parts of the KAK decomposition) and Theorem \ref{hyperplane} (exponential decay of the probability that the random walk hits
%an hyperplane) will be borrowed from \cite{aoun}. \\

In Section \ref{ch2subproof1}, we prove our mains results: Theorems \ref{ch2tr1},  \ref{ch2tr2} and \ref{ch2tr3}. The key is Theorem \ref{ch2theo1} which
computes the probability that a random walk on a linear algebraic group verifies a linear condition on the matrix coefficients. No irreducibility assumptions are made, a genericity condition on the geometry of the Lyapunov vector is however needed.  \\

Finally in Section \ref{ch2subzariski}, we apply Theorem \ref{ch2theo1} to prove Theorem \ref{ch2generic11}. We compare our results with Rivin's  in \cite{genericrivin}.\\

%Finally, in Section \ref{subexpander}, we indicate how the results of Section \ref{subproba} may probably have an application in proving
% that a family of finite graphs is an expander. We will look in particular at Varju's paper \cite{Varju}.

\paragraph{Acknowledgments} I sincerely thank  Emmanuel Breuillard and Yves Guivarc'h for fruitful discussions, remarks and advices. I thank also Igor Rivin for his interest and his comments.

\section{Examples}\label{ch2example}
In this section, we give examples to illustrate the ideas and methods we will use in the next section to prove our main results.
\subsection{Example 1}
This example illustrates Theorem \ref{ch2tr2}.\\
Let $\Gamma$ be Zariski dense subgroup of $SL_3(\R)$ ($SL_3(\Z)$ for example).
 Consider a probability measure $\mu$ on $SL_3(\R)$ with an exponential moment (see Section \ref{ch2subprelprob}) whose support generates $\Gamma$. For example, if $\Gamma$ is finitely generated,
 choose a probability measure whose support is a finite symmetric generating set.  Let $S_n=X_n\cdots X_1$ be the associated  random walk. We write $S_n$ in the canonical basis of $M_{3,3}(\R)$:
$$S_n=\left(
       \begin{array}{ccc}
         a_n & b_n & c_n \\
         d_n & e_n & f_n \\
         g_n & h_n & i_n \\
       \end{array}
     \right)$$
     We propose to see if the following probability decreases exponentially fast to zero:  $$p_n=\p (a_n^2 - a_ne_n +2a_nd_n - a_nb_n -b_nd_n=0)$$

In other words if $\mathcal{V}$ is the
 proper algebraic hypersurface of $SL_3(\R)$ defined by $\mathcal{V}=\{\left( \begin{array}{ccc}
                        a & b & c \\
                        d & e & f \\
                        g & h & i
                      \end{array}\right)\in \Gamma; a^2-ae+2ad-ab-bd=0\}$, then we are interested in estimating  $\p(S_n\in \mathcal{V})$.

\paragraph{Step 1: Linearization of the algebraic hypersurface $\mathcal{V}$.\\}
Let $E$ be the vector space of homogenous polynomials on three variables $X,Y,Z$ of degree $2$. The group $SL_3(\R)$ acts on $E$ by the formula: $g \cdot P(X,Y,Z)=P\left( g^t (X,Y,Z) \right)$ where $g^t$ is the transposed matrix of $g$ when $g$ is expressed in the canonical basis. Let us write down this representation. We will consider the basis $\{X^2, Y^2,Z^2, XY, XZ, XY\}$ of $E$.

\begin{eqnarray}
SL_3(\R) & \overset{\rho}{\longrightarrow} & GL(E) \simeq GL_6(\R)\nonumber\\
\left(
  \begin{array}{ccc}
    a & b & c \\
    d & e & f \\
    g & h & i \\
  \end{array}
\right) & \mapsto & \left(
                      \begin{array}{cccccc}
                        a^2 & b^2 & c^2 & ab & ac & bc \\
                        d^2 & e^2 & f^2 & de & df & ef \\
                        g^2 & h^2 & i^2 & gh & gi & hi \\
                        2ad & 2be & 2cf & ae+bd & af+cd & bf+ec \\
                        2ag & 2bh & 2ci & ah+gb & ai+cg & bi+ch \\
                        2dg & 2eh & 2fi & dh+eg & di+gf & ei+hf \\
                      \end{array}
                    \right)\nonumber \end{eqnarray}

In what follows we identify $E$ with $\R^6$ by sending $\{X^2,Y^2,XY,XZ,YZ\}$ to the canonical basis $\{e_i; i=1, \cdots, 6\}$.
Then it is clear that $$\mathcal{V}=\{g\in SL_3(\R); \rho(g) (e_1-e_4)\in H\}$$ where $H$ is the hyperplane in $E$ defined by
$H=\{x=(x_i)_{i=1}^6\in \R^6; x_1+x_4=0\}$.\\
We say that we have linearized the hypersurface $\mathcal{V}$.  This method generalizes easily and yields Lemma \ref{ch2lemma} which holds for arbitrary hypersurfaces.\\
Note that, for $x=e_1-e_4$, $$p_n=\p \left( \rho(S_n) x \in H \right)$$

\paragraph{Random matrix products  in $GL_6(\R)$\\}
We have now a probability measure $\rho(\mu)$, image of $\mu$ under $\rho$, on $GL_6(\R)$ with an exponential moment.
The smallest closed group $G_{\rho(\mu)}$ containing the support of $\rho(\mu)$ is a Zariski dense subgroup of $\rho(SL_3(\R))$. One can verify that $\rho$ is in fact $SL_3(\R)$-irreducible. Since $SL_3(\R)$ is Zariski connected, we deduce that
$G_{\rho(\mu)}$ is a strongly irreducible (Definition \ref{defdef}) subgroup of $GL_6(\R)$.
 Moreover, the group $\rho\left(SL_3(\R)\right)$ contains clearly a proximal element,
 then by Goldsheild-Margulis theorem \cite{Margulis} (see Theorem \ref{ch2margulis} for the statement), the same applies for $G_{\rho(\mu)}$.\\
Thus, we can use the theory of random matrix products which gives (see Lemma \ref{ch2hyperplane})
what we wanted to prove, i.e.:
$$\limsup_{n\rightarrow +\infty} \frac{1}{n} \log {\p \left(\rho(S_n)x \in H \right)}< 0$$

A word about the proof: if $[x]$ denote the projection of $x\in \R^6 \setminus\{0\}$ in the projective space
$P(\R^6)$, then $\rho(S_n)[x]$ converges in law towards a random variable $Z$
with law the unique $\mu$-invariant probability measure $\nu$ on the projective space $P(\R^6)$. It can be shown that the speed of convergence is exponential in a certain sense. Moreover, almost surely,
$Z$ cannot belong to the hyperplane $H$ because $\nu$ is proper. More precisely, we can
control the probability that the  distance between $Z$ and a fixed hyperplane $H$ be small.

 \begin{remarque}
This method  does not give an estimate of the growth of $Q(S_n)$ where $Q$ is the polynomial that defines $\mathcal{V}$.
 We will see in the next section  (Theorem \ref{ch2theo1}) how  such quantities can be estimated. \end{remarque}

 \subsection{Example 2}
 This example illustrates  situations in which we are unable to obtain the exponential decrease of the probability
  of lying in a subvariety for all probability measures (see the statement of Theorem \ref{ch2tr1}).\\
 As in Example 1, consider a probability measure on $SL_3(\R)$ with an exponential moment whose support generates a Zariski dense subgroup of $SL_3(\R)$. Say that we would like to estimate the following probability:

 $$q_n=\p (a_ne_n-b_nd_n+2e_n=0)$$
 Let $\mathcal{S}$ be the following hypersurface of $SL_3(\R)$: $\mathcal{S}=\{ae-bd+2e=0\}$ so that $q_n=\p(S_n\in \mathcal{S})$.
 Consider the natural action of $SL_3(\R)$ on $F=\bigwedge^2 \R^3 \oplus \R^3$. Denote by $\eta$ this representation and  write $\eta=\eta_1\oplus \eta_2$.  We fix the basis $(e_1\wedge e_2, e_1 \wedge e_3, e_2\wedge e_3, e_1,e_2,e_3)$ of $F$.  Formally, we have:
 \begin{eqnarray}
SL_3(\R) & \overset{\eta}{\longrightarrow} & GL(F) \simeq GL_6(\R)\nonumber\\
\left(
  \begin{array}{ccc}
    a & b & c \\
    d & e & f \\
    g & h & i \\
  \end{array}
\right) & \mapsto & \left(
                      \begin{array}{cccccc}
                        ae-bd & af-cd & bf-ec & 0 & 0 & 0 \\
                        ah-gb & ai-gc & bi-hc & 0 & 0 & 0 \\
                        dh-eg & di-gf & ei-hf & 0 & 0 & 0 \\
                        0 & 0 & 0 & a & b & c \\
                        0 & 0 & 0 & d & e & f \\
                        0 & 0 & 0 & g & h & i \\
                      \end{array}
                    \right)\nonumber \end{eqnarray}
Thus $$\mathcal{S}=\{g\in SL_3(\R); \eta(g)x\in H\}$$
where $x=e_1\wedge e_2+e_2$ and $H=\{x\in \R^6; x_1+2x_5=0\}$.
Hence, we have linearized our variety $\mathcal{S}$ as in Example 1. The difference between these two examples  is that the representation $\eta$ is no longer irreducible ($\eta_1$ and $\eta_2$ are its irreducible sub-representations). Hence we cannot use the same argument as in Example 1.\\
However, we will see in the proof of Theorem \ref{ch2theo1} that we are able to solve the problem if the top Lyapunov exponents of $\eta_1(\mu)$ and $\eta_2(\mu)$ are distinct.\\
Let us calculate them. If $\lambda_1,\lambda_2$ are top two Lyapunov exponents of $\mu$\footnote{$\lambda_1=\lim_{n\rightarrow +\infty}{\frac{1}{n}\E (\log ||S_n||)}$ and $\lambda_1+\lambda_2=\lim_{n\rightarrow +\infty}{\frac{1}{n}\E (\log ||\bigwedge^2 S_n||)}$},
then the top Lyapunov exponent of $\eta_1(\mu)$ is $\lambda_1+\lambda_2$ and  the one corresponding to
$\eta_2(\mu)$ is clearly $\lambda_1$. So the problem occurs when $\lambda_2=0$.
This can happen for example when $\mu$ is a symmetric probability measure (i.e. the law of $X_1$ is the same as $X_1^{-1}$).\\
However, we can still find a probability measure whose support generates $\Gamma$ such that $\lambda_2\neq 0$, see Lemma \ref{ch2lyapunovcone}.

\section{Linearization  of algebraic varieties}
\label{ch2sublinear}
Let $\mathbf{G}$ be a semi-simple algebraic group defined on $\R$, $G$ its group of real points.\\

   The goal of this section is to  linearize every algebraic hypersurface of $\mathbf{G}$. More precisely,
  for every  proper algebraic hypersurface $\mathcal{V}$ defined over $\R$, we associate a finite dimensional rational real representation $(\rho,V)$ of $\mathbf{G}$, a linear form $L$ of
   $End(V)$ such that $\mathcal{V}=\{g\in \mathbf{G}; L\left(\rho(g) \right)=0\}$. In fact, we  will find  a representation $(\rho,V)$ of $\mathbf{G}$, a line $D$ in $V$, a    hyperplane
   $H$ in $V$ defined over $\R$  such that $\mathcal{V}=\{g\in \mathbf{G}; g\cdot D\subset H\}$ (see Lemma \ref{ch2lemma}).
   This has to be seen as a generalization of the well-known Chevalley theorem for subgroups (see Theorem \ref{ch2chevalley}).

  \begin{defi}[Matrix coefficients]
 If $(V,\rho)$ a finite dimensional representation of $G$, $ \langle \cdot, \cdot \rangle$ a scalar product on $V$,
   we call $\langle\rho(g)v,w\rangle$ for $v,w \in V$ a matrix coefficient and we denote by $C(\rho)$ the span of
the matrix coefficients of the representation $\rho$, thus a function $f\in C(\rho)$ can be
written $L \circ \rho$ where $L$ is a linear form on the vector space $End(V)$.\label{ch2matrix}\end{defi}

Let $\rho_1, \cdots , \rho_r$ be independent $\R$-rational irreducible representations of $\mathbf{G}$. Any $f_1 \in C(\rho_1),\cdots ,f_r\in C(\rho_r)$ are linearly independent provided that the representation
$\rho_i$ are pairwise non-isomorphic (see the proof of the Lemma \ref{ch2lemma} below). The set of elements of $G$ where
 such a linear dependance is realized defines clearly an algebraic hypersurface of $\mathbf{G}$. The following lemma
 says also that each algebraic hypersurface can be realized in this way.
\begin{lemme}\label{ch2lemma} For every algebraic hypersurface $\mathcal{V}$  of $\mathbf{G}$ defined over $\R$,
there exist a representation $(\rho,V)$ of $\mathbf{G}$, a line $D$ in $V$, a hyperplane $H$ of $V$ defined over $\R$ such that
$\mathcal{V}= \{g\in \mathbf{G}; g\cdot D \subset H\}$. In particular,  there exist a representation $(\rho,V)$ of $\mathbf{G}$
\textbf{whose irreducible sub-representations}, say $\rho_1,\cdots,\rho_r$, \textbf{occur only once},\; $f_1\in C(\rho_1),\cdots,f_r\in C(\rho_r)$ such that:
\begin{equation}\mathcal{V}(\R)=\{g\in G; \sum_{i=1}^r {f_i(g)}=0\}\label{ch2expression}\end{equation}
$\mathcal{V}$ is proper if and only if at least one of the $f_i$'s is non zero.\\
This is equivalent to say that there exists $A\in End(V_1)\oplus\cdots \oplus End(V_r)$ such that:
$$\mathcal{V}(\R)=\{g\in G;\; Tr\left(\rho(g)A\right)=0\}$$ with $\mathcal{V}$ proper if and only if there exists $i=1,\cdots,r$ such that the restriction of $A$ to $V_i$ is non zero. Here $Tr(M)$ denotes the trace of the endomorphism $M$. \end{lemme}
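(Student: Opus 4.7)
The plan is to pick a single regular function $P\in\R[\mathbf{G}]$ whose zero set is $\mathcal{V}$ (such a $P$ exists because $\mathcal{V}$ is a hypersurface defined over $\R$), to expand $P$ in the algebraic Peter--Weyl decomposition of $\R[\mathbf{G}]$, and then to re-read the resulting expansion in each of the three equivalent forms asked for in the statement.

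The key input is the isotypic decomposition of the coordinate ring of a linear algebraic group under two-sided translation,
\[
\R[\mathbf{G}]=\bigoplus_{\sigma\in\widehat{\mathbf{G}}}C(\sigma),
\]
where $\widehat{\mathbf{G}}$ denotes the isomorphism classes of finite-dimensional irreducible rational $\R$-representations of $\mathbf{G}$ and $C(\sigma)$ is the subspace spanned by the matrix coefficients of $\sigma$. The directness of the sum encodes the linear independence of matrix coefficients of pairwise non-isomorphic irreducibles (a standard consequence of Jacobson density), while the fact that the $C(\sigma)$'s exhaust $\R[\mathbf{G}]$ follows from local finiteness of $\R[\mathbf{G}]$ combined with complete reducibility of finite-dimensional rational representations of the semisimple group $\mathbf{G}$. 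Projecting $P$ onto this decomposition produces finitely many pairwise non-isomorphic irreducibles $\rho_1,\dots,\rho_r$ and functions $f_i\in C(\rho_i)$ with $P=f_1+\cdots+f_r$. Using that any element of $C(\rho_i)$ can be written as $g\mapsto\mathrm{Tr}(\rho_i(g)A_i)$ for some $A_i\in\mathrm{End}_\R(V_i)$, and then setting $\rho=\rho_1\oplus\cdots\oplus\rho_r$ on $V=V_1\oplus\cdots\oplus V_r$ and $A=A_1\oplus\cdots\oplus A_r$, I obtain $P(g)=\mathrm{Tr}(\rho(g)A)=\sum_i f_i(g)$, in a representation whose irreducible sub-representations occur exactly once. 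This yields the trace and sum-of-matrix-coefficients formulations of the lemma.

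To convert this into the line-and-hyperplane statement, I would pass to the auxiliary $\R$-rational representation $\tilde\rho$ of $\mathbf{G}$ on $W=\mathrm{End}_\R(V)$ given by $\tilde\rho(g)M=\rho(g)M$ (a direct sum of $\dim V$ copies of $\rho$), and take the line $D=\R A\subset W$ together with the hyperplane $H=\ker\mathrm{Tr}\subset W$, which is defined over $\R$. The condition $\tilde\rho(g)D\subset H$ is then just a rephrasing of $\mathrm{Tr}(\rho(g)A)=0$, i.e.\ of $g\in\mathcal{V}$. The equivalence between $\mathcal{V}$ being proper and at least one $f_i$ (equivalently, at least one $A_i$) being non-zero follows immediately from the directness of the Peter--Weyl sum: $P$ vanishes identically on $G$ if and only if each component $f_i$ vanishes.

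The only substantial ingredient in the whole argument is thus the Peter--Weyl decomposition of $\R[\mathbf{G}]$, namely the linear independence of the spaces $C(\sigma)$ for pairwise non-isomorphic $\R$-irreducible rational representations of $\mathbf{G}$ together with the fact that they span $\R[\mathbf{G}]$; once this is granted, everything else is a formal rewriting of a matrix trace equation.
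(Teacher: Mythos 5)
Your proof is correct, but it reorganizes the argument in a genuinely different (and in some respects slicker) way than the paper. The paper argues from the ``small'' side: it takes the cyclic subrepresentation $V=\mathrm{Vect}(g\cdot P : g\in G)\subset\R[\mathbf G]$, which is finite-dimensional by local finiteness of the coordinate ring, and immediately gets the line--hyperplane formulation with $D=\R P$ and $H=\{f\in V: f(1)=0\}$; it then decomposes $V$ into irreducibles by hand, explicitly merging isomorphic constituents via intertwiners to reach multiplicity one, and separately proves the linear independence of matrix coefficients by complexifying, passing to a compact real form, and quoting Peter--Weyl for compact groups. You instead invoke the full algebraic Peter--Weyl decomposition $\R[\mathbf G]=\bigoplus_\sigma C(\sigma)$ as a package, which bakes in both the spanning statement (local finiteness plus complete reducibility) and the directness (independence of matrix coefficients of non-isomorphic irreducibles), so the multiplicity-one and properness claims come for free by projecting $P$; and you produce $D$ and $H$ a posteriori inside $\mathrm{End}_\R(V)$ via $D=\R A$, $H=\ker\mathrm{Tr}$, rather than inside the cyclic subrepresentation. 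Both constructions of $(D,H)$ are valid; the paper's is intrinsic to the cyclic module and cheaper dimensionally, while yours is a clean formal consequence of the trace formulation. One small caveat: when you pass from ``$P$ vanishes on $G$'' to ``$P=0$ in $\R[\mathbf G]$'' for the properness equivalence you are implicitly using Zariski density of $G$ in $\mathbf G$, and the directness of the Peter--Weyl sum over $\R$ (where irreducibles may have non-trivial endomorphism algebras) is exactly the point the paper feels obliged to justify by the compact-form unitary trick, so citing it as a black box is fine but does hide the one non-formal ingredient of the lemma.
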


%\begin{lemme}\label{from hyperplan to varieties}
%In this lemma, $G$ refers to any algebraic semi-simple split group, in particular $SL_2(\R)$.\\
%For each sub-algebraic hypersurface $\mathcal{V}$, there exists a finite dimensional real representation
%$V=\oplus_{i=1}^r V_i$ of $G$ which all irreducible sub-representations occur with multiplicity one,
%a collection $(f_1,...,f_r)$ $\in C(\rho_1) \times...\times C(\rho_r)$  such that
%$$\mathcal{V}=\{g \in G; \sum_{i=1}^r{f_i}=0\}$$
%In addition, $\mathcal{V}$ is proper if and only if at least one of $f_i$ is non-null.\\
%\end{lemme}
%\begin{theo}\label{hayda houwwe}
%let $V$ be a finite dimensional real representation of $G=Sl_2(\R)$, then for every $r\in \N^*$, $\rho_1,...,\rho_r$
%irreducible representations of $G$
%$$\displaystyle \limsup_{n\rightarrow \infty}{\frac{1}{n}\log \left[\p\left(\sum_{i=1}^r{f_i(S_n)} =0\right)\right]}<0$$
%More precisely, there exists $C>0$ that depend on $f_1,...,f_r$ such that
%$$\displaystyle \limsup_{n\rightarrow \infty} {\big [\p \left(\big |\frac{1}{n} \log |\sum_{i=1}^r f_i(S_n)| - C \big |\geq \epsilon \right)\big ]}^{\frac{1}{n}}<1$$
%In particular, almost surely, $$\frac{1}{n} \log |\sum_{i=1}^r f_i(S_n)| \underset{n \rightarrow \infty}{\longrightarrow} C$$
%\end{theo}

\begin{proof}[Proof]
Let $\R[\mathbf{G}]$ be the algebra of functions on $\mathbf{G}$, $\mathbf{G}$ acting on $\R[\mathbf{G}]$ by right translations:
 $g\cdot f(x)=f(x g)$ $\forall g,x \in \mathbf{G}$, $P$ the generator of the ideal vanishing on $\mathcal{V}$  (which is of rank one since
 $\mathcal{V}$ is a hypersurface). Then\; $g \in \mathcal{V} \Longleftrightarrow g \cdot P(1)=0$.\;
Consider the sub-representation $V=Vect(g\cdot P,g \in G)$. By \cite[Chapter 8, Proposition 8.6]{Humphreys},  $V$ is a finite dimensional  $\R$-rational
representation of $\mathbf{G}$. When $\mathcal{V}$ is proper, the subspace $H=\{f\in V; f(1)=0\}$ is a hyperplane defined over $\R$  so that $g \in \mathcal{V} \Longleftrightarrow g \cdot P\in H$
 and the first part of lemma is proved.
 $\mathbf{G}$ being semi-simple, we  decompose $(\rho,V)$ into irreducible sub-representations
 : $V=\oplus_{i=1}^rV_i$. Decomposing $P$ in the $V_i's$ gives easily (\ref{ch2expression}) with the only difference that the $V_i's$ are not necessarily pairwise non isomorphic.

 %Let $x = \sum_{i=1}^r{x_i}$ be the corresponding decomposition of $x$.
%For every $i=1,...,r$, we define $L_i$ to be the following linear form on $End(V_i)$:
%$L_i(h)=\widetilde{L}\left(h (x_i)\right)$ for $h\in End(V)$  and $f_i$ the following function on $G$:
% $f_i=L_i \circ \rho_i$. The identity $\mathcal{V}=\{g\in \mathbf{G}; g\cdot x\in H\}$ becomes then:
 %$$\mathcal{V}=\{g \in \mathbf{G}; \sum_{i=1}^r{L_i(g\cdot x_i)}=0\}=\{g \in \mathbf{G}; \sum_{i=1}^r{f_i(g)}=0\}$$
%We claim that the $\rho_i$'s can be taken pairwise non isomorphic:
Suppose for instance that $V_1\simeq V_2$. In this case,
there exists an invertible matrix $M$ such that $\rho_2(g)=M\rho_1(g)M^{-1}$ for every $g\in \mathbf{G}$.
Let $f_i=L_i \circ \rho_i$ where  $L_i$ is a suitable linear form on $End(V_i)$ for $i=1,2$. Then $f_2=\widetilde{L_2} \circ \rho_1$ where $\widetilde{L_2}$ is the  linear form
defined on $End(V_1)$ by  $\widetilde{L_2}(h)=L_2(M h M^{-1})$, $h\in End(V_1)$. Consequently, $f_2$ can be seen in
$C(\rho_1)$ so that
$f_1+f_2\in C(\rho_1)$ and $V_2$ can be dropped.  By updating $r$ if necessary,
we obtain (\ref{ch2expression}). At least one of the $f_i$'s is non zero, otherwise $\mathcal{V}$ would be $\mathbf{G}$.\\

%indeed that $\rho_{i,1},...,\rho_{i,s}$
% are isomorphic representations, say to $\rho_i$ ($1\leq i,s\leq r$),
%then by updating $f_{i,1},...,f_{i,s}$  we can suppose that $V_{i,1}=...=V_{i,s}=V_i$ and consider all the $f_{i,j}$'s
%in $C(\rho_i)$. In fact, in general let $\rho$ and $\widetilde{\rho}$ be two isomorphic representations of $G$,
%  $P$ the invertible matrix that verifies $P\rho(G)P^{-1}=\widetilde{\rho}(G)$ and $f=L \circ \rho \in C(\rho)$
 % with $L \in \mathcal{L}(End(V))$. Define $\widetilde{L} \in \mathcal{\widetilde{L}}(End(\widetilde{V}))$ by
 % $\widetilde{L}(h)=L(PhP^{-1})$. Then clearly  $f=\widetilde{L} \circ \widetilde{\rho} \;\in C(\widetilde{\rho})$. \\
 %We can then write $f_i=f_{i,1}+...+f_{i,s}$.

$\bullet$ For the converse, we will show that if
 $\rho_1,\cdots,\rho_r$ are pairwise non isomorphic representations of $\mathbf{G}$, then
any $(f_1,\cdots ,f_r)\in C(\rho_1)\times\cdots\times C(\rho_r)$ are linearly independent.
A simple argument using the Peter-Weyl  theorem will immediately give the result for compact groups and a
unitary trick will allow us to conclude.\\
If $G$ were a compact Lie group, the proof would be a consequence of  Peter-Weyl theorem
for representations of compact groups (see for example \cite{weyl}): let $\sum$ be the collection of all irreducible representations of $G$
pairwise non isomorphic, $L^2(G)$ the set of all square integrable functions  with respect to the
Haar measure on $G$, then  $\{\sqrt{dim(\rho)}\rho_{i,j};\;\rho \in \sum;\; 1 \leq i,j\leq dim(\rho)\}$ forms an orthonormal basis of $L^2(G)$, where $\rho_{i,j}$
denotes the function on $G$ defined by $\rho_{i,j}(g)=\langle\rho(e_i),e_j\rangle$ for a certain basis $\{e_1,\cdots,e_{dim(\rho)}\}$ of the representation. We deduce immediately the linear  independence of any $f_1,\cdots,f_r$, where $f_i \in C(\rho_i)$ for each $i$. \\
Now we return to the general case.  If $\sum_{i=1}^r{ \lambda_if_i(g)}=0$ for all $g\in G=\mathbf{G}(\R)$ then by Zariski density, the same holds for all  $g\in \mathbf{G}(\mathbb{C})$. We decompose the $\rho_i$'s into $\mathbf{G}(\mathbb{C})$-irreducible representations. For sake of simplicity, we keep the notation $f_i$'s to denote the new matrix coefficients that follow from this decomposition.
 The Lie algebra $\mathfrak{g}$ of $\mathbf{G}(\mathbb{C})$ has a compact real form $\mathfrak{g}_0$
 (i.e. $\mathfrak{g}_0 \bigotimes_{R} \mathbb{C} = \mathfrak{g}$). To $\mathfrak{g}_0$  corresponds  a subgroup $G_0$ of
  $\mathbf{G}(\mathbb{C})$ which  is compact and Zariski sense in $\mathbf{G}(\mathbb{C})$. Hence an irreducible real representation of $\mathbf{G}(\mathbb{C})$ is $G_0$-irreducible. We conclude using the previous paragraph concerning Peter-Weyl theorem for compact groups.
%Finally, we show that irreducible representations of $G$ are in one to one correspondence with irreducible
 %representations of a compact lie group $G_0$ on the same vector space.\\
%Let $\mathfrak{g}$ be the lie algebra of the lie group $G$, $Gsc$ the simply connected lie group naturally
%associated with $\mathfrak{g}$. Lie theory  tells us that $G$ is a quotient of $Gsc$ by a finite subgroup $F$
%of the center of $Gsc$ and that every irreducible representation of $G$ is a representation of $Gsc$ trivial on $F$. Since $\mathfrak{g}$ is semi-simple, it has a compact form $\mathfrak{g}_0$ (i.e whose complexification
%$\mathfrak{g}_0 \otimes \mathbb{C}$ equals $\mathfrak{g}$). Let $G_0$ be the  simply connected compact lie group associated
% with $\mathfrak{g}_0$. Any irreducible representation  $V$ of $Gsc$ induces a representation of $\mathfrak{g}$ on $V$,
% hence by restriction  of $\mathfrak{g}_0$, then by exponentiating a representation of $G_0$ on $V$. This representation is irreducible because if $U\subset V$ is a sub-representation of $G_0$, it induces by differentiating a representation of $\mathfrak{g}_0$ on $U$ then by tensoring  one of $\mathfrak{g}$, and finally by exponentiating a representation of $Gsc$ on $U$.  Since $V$ is $Gsc$-irreducible,  $U=V$.\\
%This ends the proof since all the $\rho_i$'s can be seen as representations of $Gsc$.
\end{proof}

 \subsection{The particular case of subgroups}
Let $\mathbf{G}$ be  an algebraic group. The linearization of proper subgroups of $\mathbf{G}$ is Chevalley's theorem:

 \begin{theo}[Chevalley]\label{ch2chevalley} \cite{Humphreys} Let $\mathbf{H}$ be a proper subgroup of $\mathbf{G}$, then there exist a rational representation $(\rho,V)$ of $\mathbf{G}$,
   a line $D$ in $V$ such that $\mathbf{H}=\{g\in \mathbf{G}; g\cdot D=D \}$. \end{theo}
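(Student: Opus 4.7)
The plan is to follow the classical argument: represent $\mathbf{H}$ as the stabilizer of an ideal of $\R[\mathbf{G}]$ under right translations, cut this ideal down to a finite-dimensional piece, and then pass to a top exterior power so that the stabilizer of a subspace becomes the stabilizer of a line.

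First I would let $I\subset \R[\mathbf{G}]$ be the ideal of regular functions vanishing on $\mathbf{H}$ and consider the right translation action $(g\cdot f)(x)=f(xg)$. The key identification is that $g\in \mathbf{H}$ if and only if $g\cdot I=I$: the forward direction is immediate, and for the converse $g\cdot I\subset I$ gives $f(xg)=0$ for every $f\in I$ and $x\in \mathbf{H}$, whence $f(g)=0$ (take $x=1$), so $g$ lies in the vanishing locus of $I$, which is $\mathbf{H}$ (since $\mathbf{H}$ is Zariski closed).

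Next, by the Hilbert basis theorem $I$ is finitely generated, say by $f_1,\dots,f_n$. I would enlarge $\mathrm{Vect}(f_1,\dots,f_n)$ to a finite-dimensional $\mathbf{G}$-invariant subspace $W\subset \R[\mathbf{G}]$ using the same result cited in the proof of Lemma \ref{ch2lemma} (Humphreys, Chapter 8, Proposition 8.6), which guarantees that every finite set of regular functions is contained in a finite-dimensional rational sub-representation. Set $M=W\cap I$, a subspace of $W$ still containing the generators $f_1,\dots,f_n$. I then want to show
$$\mathbf{H}=\{g\in \mathbf{G}\;;\;g\cdot M=M\}.$$
The inclusion $\subseteq$ follows from $g\cdot W=W$ and $g\cdot I=I$ for $g\in \mathbf{H}$. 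For the reverse, if $g\cdot M\subseteq M$ then $g\cdot f_i\in I$ for every $i$; writing any $f\in I$ as $f=\sum a_i f_i$ with $a_i\in \R[\mathbf{G}]$ and using the multiplicativity $g\cdot (a_i f_i)=(g\cdot a_i)(g\cdot f_i)$ together with the fact that $I$ is an ideal gives $g\cdot I\subseteq I$; applying the same argument to $g^{-1}$ yields equality, hence $g\in \mathbf{H}$ by the preceding paragraph.

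Finally, to convert the stabilizer of a subspace into the stabilizer of a line, I set $d=\dim M$, take $V=\bigwedge^{d} W$ (a rational representation of $\mathbf{G}$), and let $D=\bigwedge^{d} M$, which is a line in $V$. Standard linear algebra gives $g\cdot M=M$ if and only if $g\cdot D=D$ (since $g$ acts invertibly on $W$, $g\cdot D$ is a nonzero line, and $g\cdot D\subseteq D$ forces equality). This combined with the previous step yields $\mathbf{H}=\{g\in \mathbf{G}\;;\;g\cdot D=D\}$, as required. The only delicate step is the equality of stabilizers of $M$, where one must exploit that $M$ contains generators of $I$ and that $I$ is an ideal, in order to bootstrap preservation of the finite-dimensional piece into preservation of the entire ideal; once this is in hand, the passage to the exterior power is routine.
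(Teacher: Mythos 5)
Your argument is correct and is the standard proof of Chevalley's theorem; the paper itself does not supply a proof but simply cites Humphreys, which is where essentially this exact argument (ideal of $\mathbf{H}$ under right translation, a finite-dimensional invariant subspace containing the generators, and the top exterior power to reduce a subspace-stabilizer to a line-stabilizer) appears. The same basic device — cutting the right-translation representation on $\R[\mathbf{G}]$ down to a finite-dimensional rational piece — is also what drives the paper's own proof of Lemma~\ref{ch2lemma}, where the ideal is principal (a hypersurface), so the exterior-power step you use here is not needed there.
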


   In the particular case where the subgroup $\mathbf{H}$ is reductive, that is contains no proper connected unipotent subgroups, we have the
   following stronger statement:
   \begin{prop}\cite{borel} Let $\mathbf{H}$ be a proper reductive subgroup of $\mathbf{G}$, then there exists a rational real representation $(\rho,V)$ of $\mathbf{G}$, a non zero vector
   $x$ of $V$ such that $\mathbf{H}=\{g\in \mathbf{G}; g\cdot x=x \}$.\label{ch2reductive}\end{prop}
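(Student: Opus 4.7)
The plan is to upgrade Chevalley's theorem (Theorem \ref{ch2chevalley}) from line-stabilization to vector-fixation, the key extra input being complete reducibility of $\mathbf{H}$-representations in characteristic zero. First I would apply Theorem \ref{ch2chevalley} to obtain a rational real representation $(\rho_0, V_0)$ of $\mathbf{G}$ and a line $D \subset V_0$ defined over $\R$ such that $\mathbf{H} = \{g \in \mathbf{G}: g \cdot D = D\}$. In general $\mathbf{H}$ acts on $D$ by a nontrivial character, so $D$ itself contains no nonzero $\mathbf{H}$-fixed vector and one cannot conclude directly; the construction below trades $V_0$ for a larger representation where a good fixed vector is available.

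Since $\mathbf{H}$ is reductive and we are in characteristic zero, the restriction $\rho_0|_{\mathbf{H}}$ is semisimple, so $D$ admits an $\mathbf{H}$-stable complement $W$ in $V_0$, defined over $\R$. Let $p \in \mathrm{End}(V_0)$ be the projection onto $D$ along $W$. I would take for $(\rho, V)$ the rational real representation of $\mathbf{G}$ on $V := \mathrm{End}(V_0) \simeq V_0 \otimes V_0^*$ via conjugation $g \cdot T = \rho_0(g)\, T\, \rho_0(g)^{-1}$, and set $x := p$. Since $\mathbf{H}$ preserves both $D$ and $W$, it commutes with $p$, hence fixes it under the conjugation action. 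Conversely, any $g \in \mathbf{G}$ fixing $p$ commutes with $p$ in $\mathrm{End}(V_0)$, and therefore preserves the spectral subspaces $\mathrm{Im}(p) = D$ and $\mathrm{Ker}(p) = W$; preserving $D$ alone already forces $g \in \mathbf{H}$ by Chevalley's theorem, so the stabilizer of $x$ in $\mathbf{G}$ is exactly $\mathbf{H}$.

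The main subtlety is producing an $\R$-rational $\mathbf{H}$-stable complement $W$. This rests on complete reducibility of algebraic representations of a reductive group in characteristic zero, which is valid over $\R$ and not only over the algebraic closure: one may either invoke that the splitting obstruction lies in an $\mathrm{Ext}^1$ compatible with base change between characteristic-zero fields and vanishing over $\C$ by Weyl's theorem, or alternatively use the unitary trick via a maximal compact subgroup of $\mathbf{H}(\R)$, which is Zariski-dense in $\mathbf{H}$ so that its invariant subspaces coincide with the $\mathbf{H}$-invariant ones. Once this splitting is in hand, the rest of the argument is purely formal.
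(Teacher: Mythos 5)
The paper does not prove Proposition \ref{ch2reductive}; it merely cites Borel's book, so there is no paper proof to compare against. Your argument is correct and is essentially the standard one: pass from Chevalley's line $D$ to the projection $p$ onto $D$ along an $\mathbf{H}$-stable complement $W$, and cut out $\mathbf{H}$ as the stabilizer of $p$ in $\mathrm{End}(V_0)$ under conjugation. Both directions of the equality are argued properly: $\mathbf{H}$ preserves $D$ and $W$, hence commutes with $p$; conversely anything commuting with $p$ preserves $\mathrm{Im}(p)=D$ and is thus in $\mathbf{H}$ by Chevalley.

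One point in your last paragraph is wrong as stated, though it does not sink the proof: a maximal compact subgroup of $\mathbf{H}(\R)$ is \emph{not} in general Zariski dense in $\mathbf{H}$. For $\mathbf{H}=\mathbb{G}_m$ the maximal compact of $\R^\times$ is $\{\pm 1\}$, and for $\mathbf{H}=\mathbf{SL}_2$ the maximal compact $SO(2)$ of $SL_2(\R)$ is a one-dimensional torus inside a three-dimensional group; neither is dense. The correct form of the unitary trick (the one the paper itself uses in the proof of Lemma \ref{ch2lemma}) passes to a compact real form $K$ of $\mathbf{H}(\C)$, which \emph{is} Zariski dense in $\mathbf{H}_\C$: averaging a Hermitian form over $K$ yields an $\mathbf{H}_\C$-stable complement $W_\C$ to $D_\C$, and one then descends to $\R$ by averaging over $\mathrm{Gal}(\C/\R)$. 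Your first justification is the cleaner route in any case: the set of $\mathbf{H}$-equivariant splittings of $0\to D\to V_0\to V_0/D\to 0$ is an $\R$-affine space (a torsor under $\mathrm{Hom}_{\mathbf{H}}(V_0/D, D)$), so nonemptiness over $\C$ (Weyl's theorem) forces nonemptiness over $\R$; equivalently, $\mathrm{Ext}^1_{\mathbf{H}}(V_0/D,D)$ commutes with the base change $\R\hookrightarrow\C$ and vanishes over $\C$.
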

   The converse is true and is a theorem of Matsushima \cite{matsu} (see also \cite{invariant} for a recent proof).

\section{Preliminaries on algebraic groups}
\label{ch2subprel}

\subsection{The Cartan decomposition}
\label{ch2subcartan}
Let $\mathbf{G}$ be a semi-simple algebraic group defined over $\R$, $G$ its group of  real points,  $\mathbf{A}$ be a maximal $\R$-split torus of $\mathbf{G}$,
$\mathbf{X(A)}$ be the group of $\R$-rational characters of $\mathbf{A}$, $\Delta$ be the system of roots of $\mathbf{G}$ restricted to $\mathbf{A}$, $\Delta^+$ the system of positive roots (for a fixed order) and $\Pi$ the system of simple roots  (roots than cannot be obtained as product of two positive roots).\\
We consider the natural order on $\mathbf{X(A)}$: $\chi_1>\chi_2$ if and only if there exist non negative integers $\{n_\alpha; \alpha\in \Pi\}$ with  at least one non zero $n_\alpha$ such that $\frac{\chi_1}{\chi_2}=\prod_{\alpha\in \Pi} {\alpha^{n_\alpha}}$.\\
Finally  define $A^{\circ}=\{a\in A; \chi(a)\in ]0;+\infty[\;\forall \chi \in \mathbf{X(A)}\}$ and set
  $$A^{+}=\{a\in A^{\circ}\;;\;\alpha(a)\geq 1\;;\;\forall \alpha\in \Pi\}$$
Then there exists a compact $K$ of $G$ such that $$G=KA^+K\;\;\;\;\;\textrm{Cartan or $KAK$ decomposition}$$
(see \cite[Chapter 9, Theorem 1.1]{helg})

We denote by $\mathfrak{a}$ the Lie algebra of $\mathbf{A}$. The exponential map is a bijection between $A$ and $\mathfrak{a}$. A Weyl chamber is $\mathfrak{a}^+$.  We denote
 by $m$ the corresponding Cartan projection
$m: G \longrightarrow \mathfrak{a}^+$.

\subsection{Rational representations of algebraic groups}
A reference for this section is \cite{Humphreys} and \cite{Tits1}.
If $(\rho,V)$ is an $\R$-rational   representation of $\mathbf{G}$ then $\chi\in X(\mathbf{A})$ is called a weight of
$\rho$ if it is a common eigenvalue of $\mathbf{A}$ under $\rho$.
We denote  by $V_\chi$ the weight space associated to $\chi$ which  is
$V_\chi=\{x\in V; \rho(a)x=\chi(a)x\;\forall\;a\in \mathbf{A}\}$. The
following holds: $V=\oplus_{\chi\in X(\mathbf{A})}{V_\chi}$. Irreducible representations $\rho$ are characterized by
a particular weight $\chi_\rho$ called highest weight which has the following property:
 every weight $\chi$ of $\rho$ different from
$\chi_\rho$ is of the form $\chi=\frac{\chi_\rho}{\prod_{\alpha\in \Pi}{\alpha^{n_\alpha}}}$, where $n_\alpha \in \N$ for every
simple root $\alpha$.
The $V_{\chi}$'s are not necessarily of dimension $1$. When $\mathbf{G}$ is $\R$-split, $V_{\chi_\rho}$ is one dimensional.
 %that is contains a maximal torus
%defined
%in $\R$, the representation of $G$ is very similar to the complex representations of its complexified
% $G_\mathbb{C}$.
 Recall that an element $\gamma\in GL_d(\R)$ is called proximal if it has a unique
eigenvalue of maximal modulus. A representation $\rho$ of a group $\Gamma$ is said to be proximal if the group $\rho(\Gamma)$
has a proximal element.  Thus, we obtain
\begin{lemme}\label{ch2split} Every $\R$-rational irreducible representation of an $\R$-split semi-simple algebraic
 group is proximal \end{lemme}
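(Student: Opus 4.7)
The plan is to exhibit a single element $a$ in the real split torus $A$ whose image $\rho(a)$ is proximal. Since an element of $\mathbf{G}$ at which $\rho$ is proximal suffices, and since by assumption $\mathbf{G}$ is $\R$-split (so that the simple roots $\alpha \in \Pi$ are genuine $\R$-rational characters of $\mathbf{A}$), I would choose $a$ in the strict interior of $A^+$, i.e.\ $a \in A^{\circ}$ with $\alpha(a) > 1$ for every simple root $\alpha \in \Pi$. The existence of such an $a$ is essentially the fact that the map $A^{\circ} \to \R_{>0}^{\Pi}$ sending $a \mapsto (\alpha(a))_{\alpha \in \Pi}$ is surjective for a split torus.

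Next I would decompose $V = \bigoplus_{\chi} V_\chi$ into $\mathbf{A}$-weight spaces, as recalled just before the statement. Because $a \in A$ and $\mathbf{A}$ is $\R$-split, the eigenvalues of $\rho(a)$ are exactly the real numbers $\chi(a)$, each of multiplicity $\dim V_\chi$. Using the characterization of the weights of an irreducible representation with highest weight $\chi_\rho$, every weight $\chi \neq \chi_\rho$ satisfies
\[
\frac{\chi_\rho}{\chi} = \prod_{\alpha \in \Pi} \alpha^{n_\alpha}, \qquad n_\alpha \in \N, \quad \sum_\alpha n_\alpha \geq 1.
\]
Evaluating at $a$ gives $\chi_\rho(a)/\chi(a) = \prod_\alpha \alpha(a)^{n_\alpha} > 1$, so $|\chi_\rho(a)| > |\chi(a)|$ strictly, for every weight $\chi \neq \chi_\rho$.

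Finally, since $\mathbf{G}$ is $\R$-split, the weight space $V_{\chi_\rho}$ attached to the highest weight is one-dimensional (as stated just above the lemma). Hence $\chi_\rho(a)$ is a simple eigenvalue of $\rho(a)$ which strictly dominates all other eigenvalues in modulus, so $\rho(a)$ is proximal; a fortiori the representation $\rho$ is proximal.

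There is no real obstacle here: the argument is a direct consequence of the structure of weights of irreducible $\R$-rational representations together with the one-dimensionality of the highest weight space in the split case. The only mild point to handle carefully is ensuring strict inequality $\alpha(a) > 1$ rather than $\geq 1$, which is where the splitness of $\mathbf{G}$ (guaranteeing that the simple roots take arbitrary positive real values on $A$) is used.
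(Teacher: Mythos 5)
Your proof is correct and is essentially the same argument the paper has in mind: the lemma is stated immediately after the paper recalls that every weight $\chi\neq\chi_\rho$ of an irreducible representation differs from $\chi_\rho$ by a product $\prod_{\alpha\in\Pi}\alpha^{n_\alpha}$ with $n_\alpha\in\N$, and that in the $\R$-split case $V_{\chi_\rho}$ is one-dimensional, from which proximality of $\rho(a)$ for any $a$ in the interior of $A^+$ follows exactly as you wrote. The paper does not spell out the details (it says only ``Thus, we obtain''), so your write-up just fills them in; there is no divergence in method.
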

% \begin{proof}[Proof] This follows from the classifications of representation. Every representation of a semi-simple algebraic group is determined by its highest weight $\chi_\rho$. The others being of the form $\chi=\chi_{\rho}/\prod_{\alpha\in \Pi}{\alpha^{n_\alpha}}$ with $n_\alpha\in \N$. When $G$ is $\R$-split, $V_{\chi_\rho}$ is a line. Hence if $a\in A^+$, $\rho(a)=diag(a_1,....,a_d)$
 %with $|a_1|=\chi_\rho(a)\geq |a_i|$ for every $i=2...d$.\end{proof}

%\begin{lemme}\label{split} Every irreducible representation of a $\R$-split semi-simple
%algebraic group is proximal \end{lemme}
% \begin{proof}[Proof] This follows from the classifications of representation.
%  Every representation of a semi-simple algebraic group is determined by its highest weight $\chi_\rho$.
%  The others being of the form $\chi=\chi_{\rho}/\prod_{\alpha\in \Pi}{\alpha^{n_\alpha}}$ with $n_\alpha\in \N$. When $G$ is $\R$-split, $V_{\chi_\rho}$ is a line. Hence if $a\in A^+$, $\rho(a)=diag(a_1,....,a_d)$
 %with $|a_1|=\chi_\rho(a)\geq |a_i|$ for every $i=2...d$.\end{proof}
Let $\Theta_\rho=\{\alpha\in \Pi;\;\chi_\rho/\alpha \;\textrm{is a weight of $\rho$}\}$.

 \begin{prop}\cite{Tits1}
 For every $\alpha\in \Pi$, let $w_\alpha$ be the fundamental weight associated to $\alpha$.
 Then, there exists
 an $\R$-rational representation $(\rho_\alpha,V_\alpha)$ of $\mathbf{G}$  whose
 highest weight is a power of $w_\alpha$ and whose highest weight space $V_{w_\alpha}$  is one-dimensional.
 Moreover, $\Theta_{\rho_\alpha}=\{\alpha\}$ \label{ch2tits}\end{prop}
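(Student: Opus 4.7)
The plan is to construct $\rho_\alpha$ in two stages: first produce a suitable complex representation whose highest weight is (a power of) $w_\alpha$, then arrange that it descends to an $\R$-rational representation. First I would invoke the theorem of the highest weight over $\C$: for every dominant character $\lambda$ of $\mathbf{A}$ there exists an irreducible representation of $\mathbf{G}(\C)$ with highest weight $\lambda$, and its highest-weight space is one-dimensional. Applied with $\lambda = w_\alpha$ (or a power of it), this supplies a candidate representation with a one-dimensional highest weight space. The dimension-one property is preserved by any extension of scalars, so it will pass to the $\R$-rational model once we produce one.

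Next I would address $\R$-rationality, which is the only genuinely delicate point. The character $w_\alpha$ itself need not belong to the sublattice of characters coming from $\R$-rational representations, but by Tits' classification of irreducible representations of semisimple algebraic groups over an arbitrary field (this is precisely the content used from \cite{Tits1}), there is an integer $n\geq 1$ such that $n w_\alpha$ does lie in this sublattice; then there is an $\R$-rational irreducible representation $(\rho_\alpha, V_\alpha)$ with highest weight $n w_\alpha$, and its highest weight space over $\R$ is still one-dimensional, since a one-dimensional eigenspace over the algebraic closure spanned by an $\R$-rational vector is one-dimensional over $\R$. For $\R$-split groups the issue is typically already absent (one can take $n=1$), but stating the result with a power of $w_\alpha$ covers the general case uniformly.

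Finally I would verify $\Theta_{\rho_\alpha}=\{\alpha\}$. For $\beta\in\Pi$, the weight $\chi_{\rho_\alpha}/\beta$ occurs in $\rho_\alpha$ if and only if $\langle \chi_{\rho_\alpha},\beta^\vee\rangle>0$, by the standard $\mathfrak{sl}_2$-string argument applied to the copy of $\mathfrak{sl}_2$ attached to the root $\beta$ acting on the highest-weight vector. Since $\chi_{\rho_\alpha}=n w_\alpha$ and fundamental weights are dual to simple coroots, $\langle n w_\alpha,\beta^\vee\rangle = n\,\delta_{\alpha\beta}$, which is positive precisely when $\beta=\alpha$. Hence $\Theta_{\rho_\alpha}=\{\alpha\}$.

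The expected main obstacle is the descent to $\R$: the existence of an integer $n$ for which $n w_\alpha$ is the highest weight of an $\R$-rational representation is not automatic and requires Tits' general result on the field of definition of irreducible representations; once that is granted, the one-dimensionality of $V_{w_\alpha}$ and the identification of $\Theta_{\rho_\alpha}$ are formal consequences of the theory of highest weights.
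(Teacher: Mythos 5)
The paper offers no proof of this proposition---it is stated as a citation to Tits \cite{Tits1}---so you are supplying more detail than the paper itself. Your sketch is correct in the $\R$-split case (which is what Theorem \ref{ch2tr1} actually uses), but in the generality the proposition is stated it contains a real gap, and it is exactly the point that makes this a theorem of Tits rather than a corollary of the complex highest-weight theory.

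The gap is in treating the one-dimensionality of $V_{w_\alpha}$ as ``a formal consequence of the theory of highest weights'' and as ``preserved by extension of scalars.'' Note that throughout this paper $\mathbf{A}$ is a maximal \emph{$\R$-split} torus, not a maximal torus, and $V_{w_\alpha}$ is an $\mathbf{A}$-weight space (a \emph{restricted} weight space). When $\mathbf{G}$ is not split, $\mathbf{A}$ is a proper subtorus of a maximal torus $\mathbf{T}$ of $\mathbf{G}_{\C}$, and a restricted weight space $V_\chi$ is the sum of all absolute $\mathbf{T}$-weight spaces $V_\lambda$ with $\lambda|_{\mathbf{A}}=\chi$; in particular the highest restricted weight space can easily have dimension greater than one even though the highest $\mathbf{T}$-weight space has dimension one. (The paper itself warns of this: ``The $V_\chi$'s are not necessarily of dimension $1$. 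When $\mathbf{G}$ is $\R$-split, $V_{\chi_\rho}$ is one dimensional.'') So the step ``one-dimensional over $\C$, hence one-dimensional over $\R$'' does not close the gap, because the relevant space is already not one-dimensional over $\C$ for a generic irreducible representation. Ensuring that a suitable power of $w_\alpha$ can be realized as the highest weight of an $\R$-rational irreducible representation whose \emph{restricted} highest weight space is a line is precisely the nontrivial content of Tits' theorem, not a byproduct of it. The same caveat applies to your $\mathfrak{sl}_2$-string argument for $\Theta_{\rho_\alpha}=\{\alpha\}$: for a restricted simple root $\beta$ the associated rank-one subgroup need not be isogenous to $\mathrm{SL}_2$, and the string argument has to be replaced by an argument using the relative rank-one subgroup $\mathbf{G}_\beta$ and the minimal parabolic, which again is supplied by Tits' analysis. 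If you restrict attention to the $\R$-split case, where $\mathbf{A}$ is a maximal torus and restricted weights coincide with absolute weights, both of these objections disappear and your argument goes through with $n=1$.
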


We record below a basic fact about root systems (\cite{bourbaki}).
\begin{prop} Every root $\alpha\in \Delta$ is of the form: $\alpha=\prod_{\beta\in \Pi}{w_\beta^{n_\beta}}$,
with $n_\beta\in \Z$, for every $\beta\in \Pi$.
\label{ch2funda}\end{prop}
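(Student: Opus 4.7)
The plan is a direct reduction to two standard facts about root systems that live comfortably in Bourbaki's treatise, which the paper explicitly cites. I translate everything from the multiplicative notation used in the excerpt (characters of $\mathbf{A}$ written as a multiplicative group) to the additive notation on the Lie algebra $\mathfrak{a}^*$ where root-system theory is usually stated; once the additive identity is proved, exponentiating the character back to $X(\mathbf{A})$ gives the stated product formula with integer exponents.

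First I would recall the two structural inputs. (i) The simple roots $\Pi = \{\alpha_1,\ldots,\alpha_\ell\}$ form a $\mathbb{Z}$-basis of the \emph{root lattice} $Q$, and every root $\alpha \in \Delta$ is an \emph{integer} linear combination $\alpha = \sum_{i} m_i\,\alpha_i$ with $m_i \in \mathbb{Z}$ (and in fact all $m_i$ of the same sign). (ii) The fundamental weights $w_1,\ldots,w_\ell$ attached to $\Pi$ form a $\mathbb{Z}$-basis of the \emph{weight lattice} $P$, and the transition from the basis $\{w_j\}$ to $\{\alpha_i\}$ is given by the Cartan matrix $C = (c_{ij})$ with integer entries: by definition $\alpha_i = \sum_{j} c_{ji}\, w_j$, where $c_{ji} = \langle \alpha_i, \alpha_j^\vee\rangle \in \mathbb{Z}$. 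Both facts are standard and are the content of Bourbaki, Groupes et alg\`ebres de Lie, Ch. VI.

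Next I would combine them. For an arbitrary root $\alpha = \sum_i m_i \alpha_i \in \Delta$, substitute the Cartan-matrix expression for each $\alpha_i$ in terms of the $w_j$'s to obtain
\begin{equation*}
\alpha \;=\; \sum_{i} m_i \sum_{j} c_{ji}\, w_j \;=\; \sum_{j} n_j\, w_j, \qquad n_j := \sum_i c_{ji}\, m_i \in \mathbb{Z}.
\end{equation*}
Thus $\alpha$ is an integer linear combination of the fundamental weights. Translating back to the multiplicative notation on $X(\mathbf{A})$ used in the excerpt, this is exactly $\alpha = \prod_{\beta \in \Pi} w_\beta^{n_\beta}$ with $n_\beta \in \mathbb{Z}$, as claimed.

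There is essentially no obstacle: everything reduces to the inclusion $Q \subset P$ of the root lattice inside the weight lattice, which is a formal consequence of the integrality of the Cartan matrix. The only point that requires a word of caution is the passage from the algebraic-group characters of $\mathbf{A}$ (written multiplicatively in the paper) to elements of the real vector space $\mathfrak{a}^*$ where the standard root-system arguments take place; this passage is made via differentiation of characters along the Lie algebra $\mathfrak{a}$, and is compatible with the $\mathbb{Z}$-module structure, so integer coefficients on one side correspond to integer exponents on the other.
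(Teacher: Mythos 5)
Your argument is correct and is the standard one. The paper does not actually give a proof of this proposition --- it simply records it as a basic fact with a citation to Bourbaki, Ch.~VI --- and your chain of reasoning (write the root as an integer combination of simple roots, then rewrite each simple root in the fundamental-weight basis via the Cartan matrix, whose entries $\langle\alpha_i,\alpha_j^\vee\rangle$ are integers, so that the root lattice $Q$ sits inside the weight lattice $P$) is precisely the content of the cited fact, with the multiplicative/additive translation handled correctly.
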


\paragraph{Mostow theorem}\cite[\S 2.6]{Mostow1}
\label{ch2mostow}
Let $G=KAK$ be the Cartan decomposition of $G$, $(\rho,V)$ an irreducible rational real representation of $\mathbf{G}$.
There exists a scalar product on $V$ for which the elements of $\rho(K)$ are orthogonal and those of $\rho(A)$ are symmetric. In particular,
the weight spaces are orthogonal with respect to it.
The norm on $V$ induced by this scalar product is qualified by ``good''.

\subsection{Standard Parabolic subgroups and their representations}
\label{ch2subparabolic}
A reference for this section is \cite[\S 4]{gpesred}.\\
For every subset  $\theta\subset \Pi$, denote $\mathbf{A}_\theta=\{a\in \mathbf{A}; \alpha(a)=1 \forall \alpha \in \theta\}$ and let
$\mathbf{L}_\theta$ be its centralizer in $\mathbf{G}$.
 Denote by $\mathfrak{g}$  the Lie algebra of $\mathbf{G}$ and for every $\alpha\in \Delta$ denote by $\mathbf{U}_\alpha$  the unique closed unipotent subgroup
of $\mathbf{G}$ with Lie algebra: $\mathfrak{u}_{\alpha}=\mathfrak{g}_{\alpha}\oplus \mathfrak{g}_{2\alpha}$ where $\mathfrak{g}_{i\alpha} =\{X \in \mathfrak{g}; Ad(a)(X) = \alpha(a)^{i} X \;\forall a\in \mathbf{A}\}$.\\
Let $[\theta]\subset \Delta$ be the set of  roots which can be written as integral  combination of roots of $\theta$.
Denote by $\mathbf{U}_{\theta}$ the unipotent closed subgroup of $\mathbf{G}$ whose Lie algebra is
$$\mathfrak{u}_{\theta}=\bigoplus_{\alpha \in \Delta^+\setminus([\theta]\cap \Delta^+)}{\mathfrak{u}_\alpha}$$
We set $$\mathbf{P}_\theta=\mathbf{L}_\theta \mathbf{U}_\theta$$
This is the standard parabolic subgroup associated to $\theta$.
Its Lie algebra is

$$\mathfrak{p}_\theta=\mathfrak{z}\oplus  \bigoplus_{\alpha \in \Delta^+\cup[\theta]}{\mathfrak{u}_\alpha}$$
where $\mathfrak{z}$ is the Lie algebra of $\mathbf{Z}$, the centralizer of $\mathbf{A}$ in $\mathbf{G}$.
Notice that $\mathbf{P}_{\Pi}=\mathbf{G}$.\\

The following lemma will be useful to us for the proof of Theorem \ref{ch2tr3}.
\begin{lemme}  Let $(\rho,V)$ be a rational irreducible representation of $\mathbf{G}$ and consider $\theta \subset \Pi$. The  line generated by every non zero vector $x$ in the highest weight space of $V$ is fixed by $\mathbf{P}_\theta$ if  $\beta\not\in \Theta_\rho$ for every $\beta\in\theta$.
In particular, the line generated by any highest weight vector $x_\alpha$ of the representation $(\rho_\alpha,V_\alpha)$ defined in Proposition \ref{ch2tits} is fixed by
the standard parabolic $\mathbf{P}_\theta$ whenever $\alpha\not\in \theta$.
\label{ch2lemmepar}\end{lemme}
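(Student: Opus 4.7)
My plan is to work at the Lie algebra level: I will show that $\mathfrak{p}_\theta \cdot x \subseteq \R x$, and then conclude by exponentiating, using that the standard parabolic $\mathbf{P}_\theta$ is a connected algebraic group. Using the decomposition
$$\mathfrak{p}_\theta = \mathfrak{z} \oplus \bigoplus_{\alpha \in \Delta^+ \cup [\theta]} \mathfrak{u}_\alpha, \qquad \mathfrak{u}_\alpha = \mathfrak{g}_\alpha \oplus \mathfrak{g}_{2\alpha},$$
I will check that each summand sends $x$ into $\R x$, handling the three cases (centralizer, positive root, negative root in $[\theta]$) separately.

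For $\mathfrak{z}$, which commutes with $\mathfrak{a}$, the action preserves each $\mathbf{A}$-weight space, so $\mathfrak{z} \cdot x \subseteq V_{\chi_\rho}$; in the split setting (and for the representations $\rho_\alpha$ of Proposition \ref{ch2tits}) the highest weight space is one-dimensional, whence $\mathfrak{z} \cdot x \subseteq \R x$. For $\alpha \in \Delta^+$ one has $\mathfrak{g}_\alpha \cdot x \subseteq V_{\chi_\rho + \alpha}$ and $\mathfrak{g}_{2\alpha} \cdot x \subseteq V_{\chi_\rho + 2\alpha}$, both of which vanish by maximality of $\chi_\rho$.

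The main step is the case $\alpha \in [\theta]$ with $\alpha \in -\Delta^+$. Writing $-\alpha = \sum_{\beta \in \theta} n_\beta \beta$ with $n_\beta \in \N$ not all zero, I need to show that $V_{\chi_\rho + \alpha} = 0$ (and analogously for $2\alpha$). Assume the contrary. Since $\rho$ is irreducible one has $V = U(\mathfrak{n}^-) \cdot x$, and $\mathfrak{n}^-$ is generated as a Lie algebra by the lowering operators $f_\beta$, $\beta \in \Pi$; consequently every element of $V_{\chi_\rho + \alpha}$ is a linear combination of monomials $f_{\beta_1} \cdots f_{\beta_k} \cdot x$ with $\beta_i \in \Pi$ and $\beta_1 + \cdots + \beta_k = -\alpha = \sum_{\beta \in \theta} n_\beta \beta$. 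Linear independence of the simple roots forces each $\beta_i \in \theta$; and for such a monomial to be nonzero, the innermost step $f_{\beta_k} \cdot x$ must already be nonzero, i.e. $\chi_\rho - \beta_k$ must be a weight of $\rho$, which means $\beta_k \in \Theta_\rho$. This contradicts the hypothesis $\beta \notin \Theta_\rho$ for every $\beta \in \theta$.

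The ``in particular'' assertion is then immediate: for $(\rho_\alpha, V_\alpha)$ Proposition \ref{ch2tits} gives $\Theta_{\rho_\alpha} = \{\alpha\}$ together with a one-dimensional highest weight space, so the hypothesis reduces to $\alpha \notin \theta$. I expect the main obstacle to lie precisely in the negative-root case: one has to argue cleanly that each weight vector can be realized as a sum of descents by \emph{simple} roots from the highest weight vector, and then exploit linear independence of $\Pi$ to trap the first descent inside $\theta$.
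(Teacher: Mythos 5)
Your proof is correct and follows the same Lie-algebra-level approach as the paper. The paper's proof is essentially a one-step sketch that isolates only the crux — that $\mathfrak{g}_{-\beta}\cdot v \in V_{\chi_\rho - \beta} = 0$ for $\beta \in \theta$, since $\beta \notin \Theta_\rho$ means $\chi_\rho - \beta$ is not a weight — leaving the remaining routine checks (the action of $\mathfrak{z}$, positive root spaces, and the non-simple negative roots of $[\theta]$) implicit; you spell these out carefully, and your PBW-type count for the negative roots, while slightly more than the minimum needed (one could instead argue that $\mathfrak{p}_\theta$ is generated as a Lie algebra by the Borel together with the $\mathfrak{g}_{-\beta}$, $\beta\in\theta$, and that a Lie algebra generated by operators stabilizing a line stabilizes that line), is valid and in fact establishes the stronger statement that $V_{\chi_\rho + \alpha} = 0$ for every negative $\alpha \in [\theta]$.
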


\begin{proof}[Proof] Let $\chi_\rho$ be the highest weight of $\rho$. We look at the action of the Lie algebra $\mathfrak{g}$ on $V$. It is clear
that $\mathfrak{g}_{-\beta}\cdot v \in V_{\chi_\rho-\beta}$ for every $v\in V_{\chi_\rho}$ and $\beta\in \Pi$. If $\beta\not\in \Theta_\rho$, then $\chi_\rho-\beta$ is not a weight of $\rho$ and hence $V_{\chi_\rho-\beta}=0$. The last part of the lemma is just recalling that the representation $\rho_\alpha$ defined in Proposition \ref{ch2tits} satisfies $\Theta_{\rho_\alpha}=\{\alpha\}$ \end{proof}

\section{Random matrix products - convergence in the Cartan decomposition}
\label{ch2subproba}

%In this section, we consider a random walk on the real points of a semi-simple algebraic group and study carefully the behavior of the components of the random walk in the Cartan decomposition  under natural assumptions.\\

%The Lyapunov vector associated to the random walk is its Cartan projection well normalized. It is a process which belongs to the interior of the Weyl chamber. Lemma \ref{lyapunovcone} shows that this  cannot be concentrated on any finite union of hyperplanes in the Weyl chamber.\\
%Theorem \ref{ratioA} describes ratios in the $A$-component of the Cartan decomposition. It is a version in expectation of the fact that the Lyapunov vector belongs to the interior of the Weyl chamber. \\
%Theorem \ref{conv} shows that under natural conditions, the $K$-parts of the Cartan decomposition of the random walk converge exponentially fast. We obtained this result in \cite{aoun} over any local field. However, our proof here is new, more conceptual and adapted to the field of real numbers. \\

We will use in this section standard results in the theory of random matrix products. A nice reference is the book of Bougerol and La Croix  \cite{bougerol}.\\\\\

\subsection{Preliminaries}
\label{ch2subprelprob}
In the following, $G=\mathbf{G}(\R)$ is the group of real points of a semi-simple connected algebraic group, $\Gamma$ a Zariski dense subgroup of $G$, $\mu$ a  probability measure whose support generates $\Gamma$, $(\rho,V)$ an irreducible
$\R$-rational representation of $\mathbf{G}$ and $\chi_\rho$ its highest weight.
Let $\{X_n;n\in \N^*\}$ be independent random variables
 on $\Gamma$ with the same law $\mu$ and $S_n=X_n\cdots X_1$ the associated random walk .
Fix a Cartan decomposition of $G$ such that the section  $G\rightarrow KAK$ be measurable and denote for every $n\in \N^*$,
$S_n=K_nA_nU_n$ the corresponding  decomposition of $S_n$. If $\theta$ is a probability measure on $GL_d(\R)$, we denote by $G_\theta$ the smallest closed
subgroup containing the support of $\theta$. \\
%We're interested in the behavior of the three components of
% $\rho(S_n)$ in the decomposition $\rho(G)=\rho(K)\rho(A)\rho(K)$.\\

  We consider the basis of weights of $V$ and
 the ``good norm'' given by Mostow theorem (Paragraph \ref{ch2mostow}).
 %(it is a norm induces by a product scalar for which $K$ acts by isometries
 % on $V$  and the weight spaces  are orthogonal).
 It induces a $K$-invariant norm on $\bigwedge^2 V$ and hence
  a $K$-invariant distance  $\delta(\cdot,\cdot)$ on the projective space $P(V)$,  called Fubini-Study distance,  defined by:
  $\delta([x],[y])=\frac{||x \wedge y ||}{||x|| ||y||}; [x],[y]\in P(V)$.\\
  We fix an orthonormal basis on each weight   space $V_\chi$, and for an element $g\in End(V)$, $g^t$ will be the transpose matrix of $g$
  in this basis.\\
%Note that by Mostiw Theorem (Paragraph \ref{mostow}) $\rho(K)^*=\rho(K^{-1})$. For simplicity, for $k\in K$, $x\in V$, $k^*\cdot x$ will simply design
%$\rho(k)^*x$. The law of $\rho(X_1)^*$ will be denoted by $\rho(\mu)^*$.

 $G$ is isomorphic to a Zariski closed subgroup of $SL_d(\R)$ for some $d\in \N^*$  (see \cite{Humphreys}).
Let $i$ be such an isomorphism. We say way that $\mu$ has a moment of order one (resp.
an exponential moment) if for some (or equivalently any) norm on $End(\R^d)$, $\int{\log||i(g)||d\mu(g)}< \infty$   (resp. for some $\tau>0$,
 $\int{||i(g)||^\tau d\mu(g)}< \infty$). Lemma \ref{ch2exponential} below shows that is indeed a well defined notion,
 i.e. the existence of a moment of order
 one or an exponential moment is independent of the embedding.
 \begin{lemme}\label{ch2exponential} Let $G\subset SL(V)$ be the
     $\R$-points of a semi-simple algebraic group $\mathbf{G}$ and $\rho$ a finite dimensional $\R$-algebraic representation of $\mathbf{G}$. If
       $\mu$ has a moment of order one (resp. an exponential  moment)  then the image of $\mu$ under $\rho$ has also a moment of order one (resp. exponential moment).
       \end{lemme}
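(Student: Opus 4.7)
The plan is to reduce the statement to the familiar fact that any $\R$-algebraic morphism of algebraic groups satisfies a polynomial growth bound on matrix entries, and then pass this bound through the integral defining the moment.

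First I would fix any $\R$-algebraic embedding $j : \rho(\mathbf{G}) \hookrightarrow SL_{d'}(\R)$ so as to make sense of the moment of $\rho(\mu)$. The composition $j \circ \rho : \mathbf{G} \to SL_{d'}$ is a morphism of algebraic varieties, so each of its matrix coefficients is a regular function on $\mathbf{G}$. Since $\mathbf{G} \subset SL_d$ and the coordinate ring $\R[SL_d] = \R[x_{ij}]/(\det - 1)$ is a quotient of a polynomial ring, every such matrix coefficient lifts to a polynomial in the $d^2$ matrix entries of $g$. This is precisely where the hypothesis $\mathbf{G} \subset SL(V)$ (rather than $GL(V)$) is used --- one never has to invert $\det(g)$.

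From the existence of such polynomial lifts I would extract constants $C > 0$ and $k \in \N^*$ with $\|j(\rho(g))\| \leq C(1+\|i(g)\|)^k$ for every $g \in \mathbf{G}$. Since elements of $SL_d(\R)$ have operator norm at least $1$, after adjusting $C$ this simplifies to $\|j(\rho(g))\| \leq C \|i(g)\|^k$. The two moment statements now follow by integrating against $\mu$: for the moment of order one one gets $\int \log\|j(\rho(g))\|\, d\mu \leq \log C + k \int \log\|i(g)\|\, d\mu$; for the exponential moment of order $\tau$, the inequality $\|j(\rho(g))\|^{\tau/k} \leq C^{\tau/k}\|i(g)\|^\tau$ produces an exponential moment for $\rho(\mu)$ of order $\tau/k$.

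I expect no serious obstacle; the proof is essentially an exercise in translating algebraic geometry into analysis. The only subtle point worth being explicit about is the lifting of regular functions on $\mathbf{G}$ to polynomials in the entries of $g$, which relies on the containment $\mathbf{G} \subset SL_d$. As an immediate consequence, taking $\rho = \mathrm{id}$ and varying the embedding $j$ yields the independence-of-embedding claim advertised just before the lemma, so both parts of what is really being stated are obtained by the same argument.
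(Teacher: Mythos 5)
Your proof is correct and matches the paper's own argument: both rely on the fact that the matrix coefficients of $\rho(g)$ are polynomials in the entries of $g$ (which need not be divided by $\det g$ since $\mathbf{G}\subset SL_d$), combined with $\|g\|\geq 1$ on $SL_d(\R)$ to absorb the constants into a single bound of the form $\|\rho(g)\|\leq \|g\|^C$, after which both moment statements follow by direct integration. The only difference is that you spell out the need to fix an auxiliary embedding $j$ of $\rho(\mathbf{G})$ and note the ``order $\tau/k$'' bookkeeping, details the paper leaves implicit.
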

\begin{proof}[Proof]
%Embed $G$ in a linear group - \cite{Humphreys} -. Since $G$ is semi-simple, $G$ can be seen as a subgroup of $SL_d(k)$ for some $d\in \N^*$.
Each matrix coefficient $(\rho(g))_{i,j}$ of $\rho(g)$, for $g\in G$, is a fixed polynomial in terms of the matrix coefficients of $g$. Since for the canonical norm, $||g||\geq 1$ for every $g\in G$, we see that there exists $C>0$ such that $||\rho(g)||\leq ||g||^C$ for every $g\in G$. This suffices to show the lemma. \end{proof}
%and  $\rho(A_n)=diag \left( a_1(n),.<..,a_d(n) \right)$.

%For simplicity we will deal with $SL_d(\R)$ but all the definitions we give  can be naturally extended to $GL_d(\R)$.
Let us recall some definitions and well-known results.
\begin{defi}
A subgroup $\Gamma$ of $GL_d(\R)$ is called strongly irreducible if and only if the identity component of its Zariski closure does not fix a proper subspace. It is called proximal if it contains a proximal element (see Section \ref{ch2subprel}). \label{defdef}\end{defi}

The key result which prevents our results from being generalized to an arbitrary local field is Goldsheid-Margulis theorem we recall here

\begin{theo}\cite{Margulis} Let $d\geq 2$. A subgroup of $GL_d(\R)$ is strongly irreducible and proximal if and only if
 its Zariski closure is.  \label{ch2margulis}\end{theo}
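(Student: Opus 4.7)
The direction ``$\Gamma$ strongly irreducible and proximal $\Rightarrow$ $\overline{\Gamma}^Z$ strongly irreducible and proximal'' is immediate. Strong irreducibility, by Definition \ref{defdef}, is a property of the identity component of the Zariski closure, and this is unchanged when passing from $\Gamma$ to $\overline{\Gamma}^Z$. Proximality is trivially preserved when enlarging the group: any proximal element of $\Gamma$ remains a proximal element of $\overline{\Gamma}^Z \supset \Gamma$. So the content of the theorem is the reverse implication.

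Set $H = \overline{\Gamma}^Z$. Strong irreducibility descends to $\Gamma$ by the same definitional observation, so the task is to produce a proximal element in $\Gamma$ from the assumption that $H$ contains one. My strategy is to exhibit a non-empty Zariski open subset $\Omega \subset H$ consisting entirely of proximal elements; since $\Gamma$ is Zariski dense in $H$, this forces $\Gamma \cap \Omega \neq \emptyset$ and gives a proximal element of $\Gamma$.

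To build $\Omega$, I would proceed in three steps. First, using Mostow's theorem (Section \ref{ch2mostow}), equip $V=\R^d$ with an inner product for which a maximal $\R$-split torus $A$ of the identity component of $H$ acts by symmetric operators, giving an orthogonal weight-space decomposition $V = \bigoplus_\chi V_\chi$. Second, combine the hypothesis that $H$ contains a proximal element $h_0$ with strong irreducibility (and a Jordan decomposition argument conjugating the semisimple part of $h_0$ into $A$) to produce a distinguished weight $\chi_0$ with $\dim V_{\chi_0}=1$ and an open real cone $C \subset A$ on which $|\chi_0(\cdot)| > |\chi(\cdot)|$ for every other weight $\chi$. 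Third, consider the Zariski open subset
\[
U = \bigl\{a\in A : \chi(a)^2 - \chi'(a)^2 \neq 0 \text{ for all pairs of distinct weights } \chi, \chi'\bigr\}
\]
of $A$, which is non-empty (since $C \cap U \neq \emptyset$). Any element of $A \cap C \cap U$ has a strictly dominant character $\chi_0$ of multiplicity one, and is therefore proximal. Propagating by conjugation inside $H$ and combining with the KAK decomposition produces a real-open set of proximal elements in $H$ containing an entire Zariski open subset $\Omega$.

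The main obstacle is the last step: extracting a \emph{Zariski} open set from what is a priori only a real-semialgebraic condition (``modulus of one eigenvalue is strictly greater than the moduli of all others''). The cleanest way to resolve this is to reformulate proximality purely algebraically, via the non-vanishing of a suitable invariant polynomial: proximality of $g$ on $V$ is equivalent to the existence of $(v_g^+,\varphi_g^-)\in V\times V^*$ with $g v_g^+=\lambda v_g^+$, ${}^t g\, \varphi_g^-=\lambda \varphi_g^-$, $\varphi_g^-(v_g^+)\neq 0$, and the spectral radius of $g$ on $\ker\varphi_g^-$ strictly smaller than $|\lambda|$; the first three conditions cut out a Zariski constructible set, and the gap condition can be enforced by a resultant-type non-vanishing built from the characteristic polynomials of $g$ and of its action on $\bigwedge^2 V$. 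It is precisely this algebraic packaging of the ``spectral gap'' that is the technical crux of the Goldsheid--Margulis argument.
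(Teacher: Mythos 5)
The paper does not prove this theorem---it is cited from Goldsheid--Margulis and used as a black box---so the relevant question is simply whether your argument is sound, and it is not: the central step cannot be carried out because proximality is not a Zariski open (nor even Zariski constructible) condition, so the Zariski open set $\Omega$ you are after does not exist. Concretely, take $\mathbf{G}=\mathbf{SL}_2\times\mathbf{SL}_2$ acting on $V=\R^2\otimes\R^2\cong\R^4$. This representation is strongly irreducible and proximal (the highest weight line is $\R\, e_1\otimes e_1$, and $(\mathrm{diag}(2,1/2),\mathrm{diag}(3,1/3))$ has unique dominant eigenvalue $6$). Yet if $g_1$ is hyperbolic with eigenvalues $\lambda^{\pm1}$ and $g_2$ is elliptic with eigenvalues $e^{\pm i\theta}$ ($\theta\neq 0,\pi$), then $g_1\otimes g_2$ has eigenvalues $\lambda e^{\pm i\theta},\lambda^{-1}e^{\pm i\theta}$, and the two of maximal modulus form a complex conjugate pair: not proximal. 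This occurs on a nonempty open subset of $G$, so the non-proximal locus in $H$ has nonempty Euclidean interior; a nonempty Zariski open set in a connected group has complement with empty interior, so the proximal locus is not Zariski open, and by the same token it is not constructible.

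The proposed repair via resultants does not close this gap. Resultants and discriminants built from the characteristic polynomials of $g$ and of $\bigwedge^2 g$ detect \emph{equality} of eigenvalues, but the obstruction above is equality of \emph{moduli} of two distinct non-real eigenvalues, and no polynomial identity in the matrix entries can separate $|\lambda_1|>|\lambda_2|$ from $|\lambda_1|=|\lambda_2|$ when $\lambda_2\notin\R$. For the same reason the ``propagation by conjugation'' step fails: only $\R$-semisimple elements with real eigenvalues are conjugate into $A$, and that locus is itself merely semialgebraic. This is precisely why Goldsheid--Margulis is a genuine theorem about $\R$, and the paper flags the point twice (in the sentence introducing Theorem \ref{ch2margulis} and in the remark after Theorem \ref{ch2guimo}): over $\Q_p$ the statement is false, as witnessed by $SL_2(\Z_p)\subset SL_2(\Q_p)$, which is Zariski dense and strongly irreducible with proximal Zariski closure but has no proximal element at all since every eigenvalue is a unit, whereas a Zariski-density-plus-algebraic-open-locus argument such as yours would apply verbatim over $\Q_p$. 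The actual proof is dynamical: one studies the compact set of accumulation points of $\{g/\|g\|:g\in\Gamma\}$ in $\mathrm{End}(V)$ and shows, using the contraction action on $P(V)$ and the real polar decomposition, that it contains a rank-one operator.
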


\subsection{Geometry of the Lyapunov vector}
 First, let us recall the definition of the Lyapunov exponent.

\begin{dfprop}[Lyapunov exponent] If $\mu$ is a probability measure on $GL_d(\R)$, $||\cdot||$ a matricial norm on $End(V)$, $S_n=X_n \cdots X_1$ the corresponding random walk, then the Lyapunov exponent $L_{\mu}$  is $L_{\mu}=\lim \frac{1}{n} \E(\log ||S_n||)$ which exists by simple application of the subadditive lemma.\\
When $\mu$  have a moment of order one, the following a.s. limit holds $L_{\mu}=\lim \frac{1}{n} \log ||S_n||$. It can be  proved via the Kingman subadditive ergodic theorem \cite{Kingman}.  \label{ch2liapou}\end{dfprop}
A useful result will be the following
\begin{prop}\cite[Corollary 4 page 53]{bougerol}  Let $\theta$ be a probability measure on $GL_d(\R)$ with a moment of order one and such that $G_\theta:=\overline{\langle Supp(\theta) \rangle }$ is strongly irreducible. Then for every sequence $\{x_n;n\geq 0\}$ of vectors in $\R^d$
converging to some non zero vector $x\in \R^d$, $\frac{1}{n} \log ||S_n x_n|| \underset{n \rightarrow \infty}{\overset{\textrm{a.s.}}{\longrightarrow}} L_\theta$.       \label{ch2bougerol}\end{prop}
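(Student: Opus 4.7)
The plan is to treat this as a continuity refinement of Furstenberg's theorem on the growth of $||S_n v||$: for a fixed non-zero $v$ one already has $\frac{1}{n}\log ||S_n v|| \to L_\theta$ a.s. under the stated hypotheses, and the new content is robustness under the perturbation $v \to x_n$.

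For the upper bound I would just use $||S_n x_n|| \leq ||S_n||\cdot ||x_n||$. Kingman's subadditive ergodic theorem together with the moment assumption yields the Furstenberg--Kesten limit $\frac{1}{n}\log ||S_n|| \to L_\theta$ a.s., and since $||x_n|| \to ||x|| < \infty$ this gives $\limsup \frac{1}{n}\log ||S_n x_n|| \leq L_\theta$ a.s.

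The matching lower bound for a \emph{fixed} non-zero $v$ is Furstenberg's theorem. Writing
$$\log\frac{||S_n v||}{||v||} = \sum_{k=0}^{n-1}\log\frac{||X_{k+1}\, S_k v||}{||S_k v||}$$
as a Birkhoff sum along the projective Markov chain $[S_n v]$ on $P(\R^d)$ for the cocycle $\sigma(g,[w])=\log(||gw||/||w||)$, strong irreducibility forces every $\theta$-stationary probability measure $\nu$ on $P(\R^d)$ to be proper; Furstenberg's integral identity $L_\theta = \int\int \sigma\,d\theta\,d\nu$ and the Birkhoff ergodic theorem then identify the limit as $L_\theta$ independently of the starting direction $[v]$.

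The main obstacle is extending this to the case $x_n \to x$, since the exceptional null set from the previous step depends on the starting vector. To deal with this I would invoke Oseledec's multiplicative ergodic theorem: it produces a random measurable filtration $\R^d = V^1(\omega) \supsetneq V^2(\omega) \supsetneq \cdots$ such that $\frac{1}{n}\log ||S_n y|| \to \lambda_i$ whenever $y\in V^i(\omega)\setminus V^{i+1}(\omega)$, with $\lambda_1 = L_\theta$. Under strong irreducibility $V^2(\omega)$ is a.s. a proper subspace, and the fixed-vector case gives $x \notin V^2(\omega)$ a.s., whence $d(x, V^2(\omega)) > 0$; since $V^2(\omega)$ is fixed once $\omega$ is, continuity of the distance yields $\liminf_n d(x_n, V^2(\omega)) = d(x, V^2(\omega)) > 0$ a.s. A quantitative form of Oseledec comparing $||S_n y||/||S_n||$ with $d(y, V^2(\omega))$ up to a subexponential correction then produces $\liminf \frac{1}{n}\log ||S_n x_n|| \geq L_\theta$ a.s., which combined with the upper bound completes the proof.
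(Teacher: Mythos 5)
The paper itself does not prove this proposition; it cites it directly from Bougerol--Lacroix, where the argument runs through stationary measures on the projective space (and, in the non-proximal case, on Grassmannians): one proves convergence of the Birkhoff averages of the norm cocycle for $\nu\otimes\p$-a.e.\ starting direction and then upgrades to all non-zero $x$, and to sequences $x_n\to x$, by a uniformity estimate of the form $\sup_{\|y\|=1}\big|\frac{1}{n}\E\log\|S_n y\|-L_\theta\big|\to 0$ together with a stopping-time / subadditivity argument. Your route through Oseledec is therefore genuinely different, and its strategic skeleton is sound: the upper bound by $\|S_n\|\cdot\|x_n\|$ and Furstenberg--Kesten is correct, and the observation that a.s.\ $x\notin V^2(\omega)$ (so $\liminf_n d(x_n,V^2(\omega))>0$) is exactly the right reduction.

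The place where you would have to do real work, and where the write-up currently hand-waves, is the clause ``a quantitative form of Oseledec\ldots''. Two issues. First, the one-sided multiplicative ergodic theorem (which is all you get from a one-sided i.i.d.\ product and a one-sided moment bound $\E\log^+\|X_1\|<\infty$) delivers only the \emph{filtration} $V^1\supsetneq V^2\supsetneq\cdots$ and \emph{pointwise} convergence $\frac1n\log\|S_ny\|\to\lambda_i$ for fixed $y$; it does not by itself give the uniform-in-$y$ lower bound $\|S_ny\|\gtrsim d(y,V^2(\omega))\,e^{n(\lambda_1-o(1))}$ that you need to handle the moving target $x_n$. To obtain such an estimate one should pass to the two-sided extension of the i.i.d.\ sequence, invoke the Oseledec \emph{splitting} $\R^d=E_1(\tilde\omega)\oplus V^2(\omega)$ there, decompose $y=y_1+y'$ accordingly, note $\|y_1\|\ge d(y,V^2)$, and then control $\sigma_{\min}(S_n|_{E_1})$ from below and $\sigma_{\max}(S_n|_{V^2})$ from above using the restricted cocycles (both of which require an Oseledec-type statement for cocycles over a random bundle); the two rates are separated because $\lambda_1>\lambda_2$. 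This is doable, but it is the crux and deserves an explicit argument rather than a citation. Second, and related: the splitting, and the singular-value estimates for $S_n|_{E_1}$, use two-sided integrability $\E\log^+\|X_1^{-1}\|<\infty$, which is automatic in $SL_d$ but not from the stated ``moment of order one'' hypothesis on $GL_d(\R)$ alone; you should either restrict to $SL_d$ (harmless after normalising by $|\det|^{1/d}$) or add the inverse moment assumption. With these two points supplied, your proof goes through as an alternative to the Bougerol--Lacroix argument.
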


\begin{remarque} In \cite{bougerol}, the condition is made on the smallest closed sub-semi-group $\Gamma_\theta$ containing the support of $\theta$. There is no difference taking $\Gamma_\theta$ or $G_\theta$ because they have the same Zariski closure. Hence if one is strongly irreducible than the other satisfies the same property. This remark applies also for later applications when proximality is envolved (see for example the statement of Theorem \ref{ch2hausdorff}). This is due to Golsheild-Margulis theorem (Theorem \ref{ch2margulis}) which is special to the field of real numbers.\end{remarque}
\begin{dfprop}[Lyapunov vector]
 Suppose that $\mu$ has a moment of order one. Then the Lyapunov vector is the constant vector in the Weyl chamber $\mathfrak{a}^+$ of $G$ (see Section \ref{ch2subcartan}) defined as the following a.s. limit:
$$\frac{1}{n} m(S_n) \underset{n \rightarrow \infty}{\overset{\textrm{a.s}}{\longrightarrow}} Liap(\mu)$$
where $m$ is the Cartan projection (Section \ref{ch2subcartan}).
\label{ch2liapuvector}\end{dfprop}
\begin{proof}[Proof]
Let $\alpha\in \Pi$. Express $\alpha$ in terms of the fundamental weights (Proposition \ref{ch2funda}),
 $\alpha=\prod_{\beta \in \Pi}{w_\beta^{n_\beta}}$ where $n_\beta\in \Z$ for every $\beta \in \Pi$.
 For every $\beta\in \Pi$, consider the rational real irreducible representation $(\rho_\beta, V_\beta)$
   given by Proposition \ref{ch2tits} and a good norm on $V_\beta$ (Paragraph \ref{ch2mostow}).
   By the definition of $\rho_\beta$, there exists an integer $l_\beta$ such that for every $n\in \N^*$,
     $||\rho_\beta(S_n)||=w_\beta^{l_\beta}(A_n)$. Hence,
\begin{equation}\label{ch2nadet}\frac{1}{n}\log \;\alpha(A_n)=\sum_{\beta\in \Pi}{\frac{n_\beta}{l_\beta}\;
\frac{1}{n}\log \;||\rho_\beta(S_n)||}\end{equation}
By Definition/Proposition \ref{ch2liapou},  $\lim \frac{1}{n}\log \;\alpha(A_n) \overset{a.s.}{=} \sum_{\beta\in \Pi}{\frac{n_\beta}{l_\beta}\;
L_{\rho_{\beta}(\mu)}}$. Thus $Liap(\mu)$ is well defined.
\end{proof}

\begin{theo}\cite{Guivarch} Suppose that $\mu$ has a moment of order one. Then the Lyapunov vector $Liap(\mu)$ belongs to the interior of the Weyl chamber $\mathfrak{a}^+$, i.e. $\alpha\left(Liap(\mu) \right)>0$ $\forall \alpha\in \Pi$. \label{ch2guimo} \end{theo}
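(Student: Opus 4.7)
The plan is to prove $\alpha(Liap(\mu))>0$ separately for each simple root $\alpha\in\Pi$, by reading $\alpha(m(g))$ off from the ratio of the two largest singular values of a single well-chosen irreducible representation, and then invoking the classical simplicity of the top Lyapunov exponent in the theory of random matrix products. For each $\alpha\in\Pi$, the representation of choice is $(\rho_\alpha,V_\alpha)$ supplied by Proposition \ref{ch2tits}: its highest weight $\chi_{\rho_\alpha}$ is a power of the fundamental weight $w_\alpha$, its highest weight space is one-dimensional, and $\Theta_{\rho_\alpha}=\{\alpha\}$.

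\textbf{Main computation.} Equip $V_\alpha$ with a Mostow good norm (Paragraph \ref{ch2mostow}), so that $\rho_\alpha(K)$ acts by orthogonal operators, $\rho_\alpha(A)$ by symmetric ones, and the weight spaces are pairwise orthogonal. For $g=kau$ in Cartan form, the singular values of $\rho_\alpha(g)$ coincide with those of $\rho_\alpha(a)$, and these in turn are the numbers $\chi(a)$ as $\chi$ runs over the weights of $\rho_\alpha$, counted with the appropriate multiplicities. For $a\in A^+$, the top value is $\chi_{\rho_\alpha}(a)$ with multiplicity one since $V_{\chi_{\rho_\alpha}}$ is one-dimensional. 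Every other weight has the form $\chi_{\rho_\alpha}/\prod_{\beta\in\Pi}\beta^{n_\beta}$ with $n_\beta\in\mathbb{N}$, not all zero; the condition $\Theta_{\rho_\alpha}=\{\alpha\}$ rules out the ``one-step'' candidates $\chi_{\rho_\alpha}/\beta$ for $\beta\neq\alpha$, and combining this with $\beta(a)\geq 1$ for every $\beta$ forces the second largest singular value to be exactly $\chi_{\rho_\alpha}(a)/\alpha(a)$. Writing $\sigma_i$ for the $i$-th singular value,
\[
\log\sigma_1(\rho_\alpha(g))-\log\sigma_2(\rho_\alpha(g)) \;=\; \alpha(m(g)),
\]
so dividing by $n$ with $g=S_n$ and letting $n\to\infty$, Definition/Proposition \ref{ch2liapuvector} and the standard definition of Lyapunov exponents give
\[
\alpha(Liap(\mu)) \;=\; L_1(\rho_\alpha(\mu))-L_2(\rho_\alpha(\mu)).
\]
Lemma \ref{ch2exponential} ensures $\rho_\alpha(\mu)$ inherits a moment of order one, so these Lyapunov exponents are finite.

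\textbf{Closing step and main obstacle.} It remains to check that this gap is strictly positive. Zariski density of $\Gamma$ in $\mathbf{G}$, together with the irreducibility of $\rho_\alpha$ and the Zariski connectedness of $\mathbf{G}$, makes $\rho_\alpha(\Gamma)$ strongly irreducible. For proximality, pick $a\in A^+$ with $\beta(a)>1$ for every $\beta\in\Pi$: then $\chi_{\rho_\alpha}(a)$ is the strictly dominant eigenvalue of $\rho_\alpha(a)$ and its eigenspace is one-dimensional, so $\rho_\alpha(a)$ is a proximal element of $\rho_\alpha(\mathbf{G})$; Goldsheid--Margulis (Theorem \ref{ch2margulis}) then transfers proximality to $\rho_\alpha(\Gamma)$. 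The classical simplicity theorem for random products in $GL(V_\alpha)$ (strongly irreducible and proximal measures with a first moment satisfy $L_1>L_2$, see \cite{bougerol}) finally yields $\alpha(Liap(\mu))>0$; running this argument for every $\alpha\in\Pi$ places $Liap(\mu)$ in the interior of $\mathfrak{a}^+$. The main obstacle in the plan is the identification of the second singular value with $\chi_{\rho_\alpha}(a)/\alpha(a)$: it requires the one-dimensionality of the highest weight space, the specific condition $\Theta_{\rho_\alpha}=\{\alpha\}$, and the Mostow good norm so that ``singular values'' and ``weight values'' genuinely coincide rather than merely agreeing up to bounded multiplicative factors.
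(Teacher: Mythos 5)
Your proof is correct and shares with the paper all the key structural ingredients: Tits' representations $(\rho_\alpha,V_\alpha)$ with $\Theta_{\rho_\alpha}=\{\alpha\}$ and one-dimensional highest weight space, Mostow's good norm so that the singular values of $\rho_\alpha(g)$ are exactly the weight values at the $A$-part, the identity $\alpha(m(g))=\log\sigma_1(\rho_\alpha(g))-\log\sigma_2(\rho_\alpha(g))$, and Goldsheid--Margulis (Theorem \ref{ch2margulis}) to transfer proximality from $\rho_\alpha(\mathbf{G})$ to $\rho_\alpha(\Gamma)$. Where you diverge from the paper is the final positivity step. You quote the Guivarc'h--Raugi simplicity-of-the-top-exponent theorem ($L_1>L_2$ for a strongly irreducible, proximal random product with a first moment) and finish by citation. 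The paper instead re-derives the positivity from scratch: it writes $\alpha$ as a product of fundamental weights (Proposition \ref{ch2funda}), builds an additive cocycle $s$ on $G\times\prod_\beta P(V_\beta)$ whose $\eta$-mean equals $\alpha(Liap(\mu))$, and invokes Dekking's positivity lemma (Lemma \ref{ch2ergogui}) after checking that the cocycle almost surely tends to $+\infty$, which is deduced from the rank-one limit-point property of $\rho_\alpha(S_n)/\|\rho_\alpha(S_n)\|$. In effect the paper embeds a tailored proof of the simplicity theorem into the argument via the ergodic cocycle machinery, whereas you use the theorem as a black box; your route is shorter, the paper's is more self-contained. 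One presentational remark: to identify $\sigma_2(\rho_\alpha(a))$ with $\chi_{\rho_\alpha}(a)/\alpha(a)$ for $a\in A^+$, it does not suffice to rule out the one-step candidates $\chi_{\rho_\alpha}/\beta$ for $\beta\neq\alpha$; you also need that \emph{every} weight distinct from $\chi_{\rho_\alpha}$ lies below $\chi_{\rho_\alpha}/\alpha$, which follows from the standard chain (saturation) property of the weight set of an irreducible representation combined with $\Theta_{\rho_\alpha}=\{\alpha\}$. The paper relies on the same fact implicitly, so this is a detail to spell out rather than a gap.
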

\begin{remarque} When the local field in not $\R$, the Lyapunov vector does not necessarily belong to the interior of
$\mathfrak{a}^+$. The reason is that Goldsheild-Margulis theorem (Theorem \ref{ch2margulis}) is valid only over the real field.  \end{remarque}
For the reader's convenience, we include a proof of Theorem \ref{ch2guimo}.
\begin{proof}[Proof] Without loss of generality, one can suppose $\Omega=G{^\N}=\{w=(w_i)_{i\in \N^*}; w_i\in G\}$, $\p$ the probability measure for which the coordinates $w_i$ are independent with law $\mu$ and $\mathcal{F}$ the $\sigma$-algebra generated by the coordinate maps $w_i$.\\
We want to show that for every $\alpha\in \Pi$, $l:=\lim\frac{1}{n} \log \;\alpha(A_n)>0$.
By equation (\ref{ch2nadet}), $l$ is the following constant: $l=\sum_{\beta\in \Pi}{\frac{n_\beta}{l_\beta}\; L_{\rho_\beta(\mu)} }$.
Let  $X=\prod_{\beta\in \Pi}{P(V_\beta)}$, $s$  the application on $G\times X$ defined by:
$$s\left(g, ([x_\beta])_{\beta\in \Pi_\alpha}\right) = \sum_{\beta\in \Pi_\alpha}{\frac{n_\beta}{l_\beta}\;
\log \;\frac{||\rho_\beta(g)x_\beta||}{||x_\beta||}}$$
It is immediate that $s$ is an additive cocycle on $G\times X$ for the natural action of $G$ on $X$.
Since $X$ is compact,  one can choose a $\mu$-invariant measure $\nu$ on $X$.\\
 Consider the dynamical system $E=\Omega\times X$, the distribution $\eta=\p \otimes \nu$ on $E$,
  the shift $\theta: E \rightarrow E$,  $\left((g_0,\cdots),x\right) \longmapsto \left((g_1,\cdots),g_0 \cdot x\right)$.
  Since $\nu$ is $\mu$-invariant, $\eta$ is $\theta$-invariant. We extend the definition domain of $s$ from $G\times X$ to $G^\N \times X$
  by setting
   $s(\omega,x):=s(g_0,x)$ if $\omega=(g_0,\cdots)$.
   Since $\mu$ has a moment of order one, Lemma \ref{ch2exponential} shows that the same holds for the image probability measure
$  \rho_\beta( \mu)$ for every $\beta\in \Pi$.
 Hence $s\in L_1(\eta)$. In consequence, we can apply the
  ergodic theorem (see \cite[Theorem 6.21]{brei}) which shows that $\frac{1}{n}{\sum_{i=0}^n {s\circ \theta^i (\omega,x)}}$
    converges for $\eta$-almost every $(\omega,x)$ to a random variable $Y$ whose expectation is
    $\iint{s(g,x)d\mu(g)d\nu(x)}$.  Since $s$ is a cocycle,\;
    $s\left(S_n(\omega),x\right)={\sum_{i=0}^n {s\circ \theta^i (\omega,x)}}$. Hence,
   $$ \lim_{n\rightarrow\infty}{\frac{1}{n}
s\left(S_n(\omega),x\right)} = Y\;\;\;\;\;;\;\;\;\; \E_\eta(Y)=\iint{s(g,x)d\mu(g)d\nu(x)}$$
But using Proposition \ref{ch2bougerol}, we see that a.s. $Y=l$ so that

   $$l=  \iint{s(g,x)d\mu(g)d\nu(x)}$$

By lemma \ref{ch2ergogui} below, $l$ is positive if   for $\eta$-almost every
$(\omega,x)$, $s(S_n(w),x) \underset{n\rightarrow\infty}{\longrightarrow} +\infty$.
Again by Proposition \ref{ch2bougerol}, for $\eta$-almost every $(w,x)$,
      $s(S_n(w),x)$ has the same behavior at infinity as the $\p$-almost everywhere behavior of
       $$\sum_{\beta\in \Pi_\alpha}{\frac{n_\beta}{l_\beta}\; \frac{1}{n}\log \;{||\rho_\beta(S_n)||}}=
       \log\;\alpha(A_n)$$
        In consequence, it suffices to show that
        $\alpha(A_n)\underset{n\rightarrow\infty}{\overset{\textrm{a.s.}}{\longrightarrow}}+\infty$.
        Indeed, the representation $\rho_\alpha$ is strongly irreducible because $G$ is Zariski connected. By Zariski density of $\Gamma$, the same holds for $\rho_\alpha(\Gamma)$. Moreover, by Goldsheild-Margulis Theorem (Theorem \ref{ch2margulis}) , $\rho_\alpha(\Gamma)$ is also proximal.
       By \cite[Theorem 3.1 page 50]{bougerol}, a.s. every limit point of $\frac{\rho_\alpha(S_n)}{||\rho_\alpha(S_n)||}$ is a rank one matrix.
     Hence, if $\rho_\alpha(A_n)=diag\left(a_1(n),\cdots,a_d(n)\right)$, then a.s.
    $a_2(n)/a_1(n)$ converges a.s. to zero. But $\Theta_{\rho_\alpha}=\{\alpha\}$ so that
    $\alpha(A_n)=a_1(n)/a_2(n)\;\underset{n\rightarrow\infty}{\longrightarrow}+\infty$. \end{proof}
\begin{lemme}\cite{dekk} Let $G$ be a group,  $X$ be a $G$-space,
$(X_n)_{n\in \N^*}$ a sequence of independent elements of $G$ with distribution $\mu$ and
$s$ an additive cocycle on $G\times X$. Suppose that $\nu$ is a $\mu$-invariant probability measure on $X$
such that:\\
(i) \; $\iint {s^+ (g,x) d\mu(g) d\nu(x)}< \infty$ \\
(ii)\; For $\p \otimes \nu$-almost every  $(w,x)$, \; $\lim_{n \rightarrow \infty} {s \left(X_n(w)\cdots X_1(w),x\right ) }=+ \infty $. \\
Then $s$ is in $L^1 (\p \otimes \nu)$ and $\iint{s(g,x) d\mu (g) d\nu(x)}>0$\label{ch2ergogui}\end{lemme}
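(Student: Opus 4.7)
The plan is to recast the statement as an ergodic-theoretic assertion about the Birkhoff sums of $s$ under a skew-product shift, and then treat integrability and strict positivity in turn.

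First I would reuse the skew-product set-up used in the proof of Theorem~\ref{ch2guimo}: let $\Omega=G^{\N^*}$ carry the product measure, put $E=\Omega\times X$, $\eta=\p\otimes\nu$, and define the shift $\theta\bigl((g_1,g_2,\dots),x\bigr)=\bigl((g_2,g_3,\dots),g_1\cdot x\bigr)$, which preserves $\eta$ because $\nu$ is $\mu$-invariant. Extend $s$ to $E$ by $s(\omega,x):=s(g_1,x)$; the additive cocycle identity then gives $T_n(\omega,x):=s(X_n(\omega)\cdots X_1(\omega),x)=\sum_{i=0}^{n-1} s\circ\theta^i(\omega,x)$, so $(T_n)$ is the Birkhoff sum of $s$ for $(\theta,\eta)$ and hypothesis~(ii) reads $T_n\to+\infty$ $\eta$-a.s.

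Next I would establish $s\in L^1(\eta)$. Hypothesis~(i) says $s^+\in L^1(\eta)$, so Birkhoff's ergodic theorem yields $\frac{1}{n}\sum_{i=0}^{n-1} s^+\circ\theta^i\to\E[s^+\mid\mathcal{I}]$ $\eta$-a.s.\ with an a.s.\ finite limit, where $\mathcal{I}$ is the $\theta$-invariant $\sigma$-algebra. Applying Birkhoff to the bounded truncations $s^-\wedge M$ and letting $M\uparrow\infty$ via monotone convergence of conditional expectations, if $s^-\notin L^1(\eta)$ there would exist a $\theta$-invariant set of positive $\eta$-measure on which $\frac{1}{n}\sum s^-\circ\theta^i\to+\infty$, hence on which $T_n/n\to-\infty$; this contradicts $T_n\to+\infty$. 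So $s\in L^1(\eta)$.

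Once $s\in L^1(\eta)$, Birkhoff applied to $s$ gives $T_n/n\to\E[s\mid\mathcal{I}]$ $\eta$-a.s., and the divergence of $T_n$ forces $\E[s\mid\mathcal{I}]\geq 0$ $\eta$-a.s., whence $\int s\,d\eta\geq 0$. To upgrade this to the strict inequality I would argue by contradiction: if $\int s\,d\eta=0$ then $\E[s\mid\mathcal{I}]=0$ $\eta$-a.s., and on $\eta$-almost every ergodic component $\eta_\alpha$ of $(\theta,\eta)$ the cocycle $s$ has integral zero; Atkinson's recurrence theorem for mean-zero $L^1$ Birkhoff sums over an ergodic system then forces $\liminf_n|T_n|=0$ $\eta_\alpha$-a.s., contradicting~(ii). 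This last step is where the real difficulty lies: the divergence $T_n\to+\infty$ by itself only provides the non-strict bound $\int s\,d\eta\geq 0$ through Birkhoff, and a genuine recurrence input (Atkinson on each ergodic component) is required to exclude the degenerate mean-zero case. Everything else is routine Birkhoff-type manipulation together with the monotone variant for nonnegative functions.
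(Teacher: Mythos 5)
The paper imports this lemma from Dekking's 1982 article \cite{dekk} and provides no proof, so there is no internal argument to compare against; you are effectively supplying the missing proof. Your argument is correct, and it is in fact in the same spirit as Dekking's original one, which also reduces to a zero-mean recurrence theorem for $L^1$ cocycles over ergodic systems (Atkinson's theorem is the standard reference for this step, and Dekking relies on exactly this kind of input). The structure is sound throughout: the skew-product reformulation turns (ii) into $T_n\to+\infty$ $\eta$-a.s.; the truncation-and-Birkhoff argument correctly forces $s^-\in L^1(\eta)$, since on a positive-measure $\theta$-invariant set $T_n/n\to-\infty$ would otherwise hold, contradicting $T_n\to+\infty$; once $s\in L^1(\eta)$, Birkhoff gives $\E[s\mid\mathcal I]\ge 0$ a.s.; and the ergodic decomposition together with Atkinson rules out the degenerate case $\int s\,d\eta=0$. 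One small point worth making explicit is that $s\in L^1(\eta_\alpha)$ for $\pi$-a.e.\ ergodic component $\eta_\alpha$, which follows from $\int|s|\,d\eta=\int\bigl(\int|s|\,d\eta_\alpha\bigr)\,d\pi(\alpha)<\infty$, so that Atkinson's hypothesis is indeed met fiberwise (and one should assume, as is standard, that the underlying space is a standard Borel space so the ergodic decomposition is available). Your remark that the strict inequality cannot be extracted from Birkhoff alone and genuinely requires a recurrence input is exactly right.
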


%We begin with
%Recall also the following
%\begin{theo}[Lyapunov vector] Suppose that $\mu$ has a moment of order one. Then,  there exists a constant vector $Liap(\mu)$ in the
%interior of the Weyl chamber $\mathfrak{a}$, called Lyapunov vector, such that
%$$\frac{1}{n} m(S_n) \underset{n \rightarrow \infty}{\overset{\textrm{a.s}}{\longrightarrow}} Liap(\mu)$$

 %\label{guimo}\end{theo}

The following lemma describes the geometry of the Lyapunov vector inside the Weyl chamber.

%The following lemma will be used to check condition 2 of Theorem \ref{theo1}. It has an interest in itself because it describes the geometry of the Lyapunov exponent. It is a stronger version of a Theorem of Goldsheild - \cite[]{goldheild} and a weaker version of a theorem of Guivarc'h . .
\begin{lemme} Let $\Gamma$ be a Zariski dense subgroup of $G$. Then for every finite union $F$ of hyperplanes
in  $\mathfrak{a}$ (see Section \ref{ch2subcartan} for the definition of $\mathfrak{a}$), there exist a probability measure $\mu$ on $\Gamma$ with an exponential moment whose support generates $\Gamma$ and whose Lyapunov vector $Liap(\mu)$ is not included in $F$.
In consequence, if $(V_1,\rho_1),\cdots, (V_r,\rho_r)$ are pairwise non isomorphic irreducible representations of $\mathbf{G}$, then one can exhibit a probability
measure $\mu$ whose support generates $\Gamma$, a permutation $\sigma$ of $\{1,\cdots, r\}$ such that $L_{\rho_{\sigma(1)}(\mu)}>\cdots>L_{\rho_{\sigma(r)}(\mu)}$ (See Definition \ref{ch2liapou}).
\label{ch2lyapunovcone}\end{lemme}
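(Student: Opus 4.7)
The second statement reduces to the first. Non-isomorphic $\R$-rational irreducible representations $\rho_i, \rho_j$ of the connected semi-simple $\mathbf{G}$ have distinct highest weights (highest weight theorem), and the argument of Definition/Proposition~\ref{ch2liapuvector} with Mostow's good norm (Section~\ref{ch2mostow}) gives $L_{\rho_i(\mu)} = (\log \chi_{\rho_i})(Liap(\mu))$ as a linear functional on $\mathfrak{a}$. Hence $\{L_{\rho_i(\mu)} = L_{\rho_j(\mu)}\}$ is the hyperplane $H_{ij} = \ker(\log \chi_{\rho_i} - \log \chi_{\rho_j})$ in $\mathfrak{a}$; applying the first part to $F = \bigcup_{i<j} H_{ij}$ yields a $\mu$ with pairwise distinct $L_{\rho_i(\mu)}$, which one then orders via a permutation~$\sigma$.

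For the first part, I first fix any reference measure $\mu_0$ on $\Gamma$ with exponential moment whose support generates $\Gamma$. The condition $J_g \in F$ on Jordan projections is a polynomial condition on the eigenvalues of $g$, hence a proper real algebraic condition on $g \in G$; combined with the fact that the loxodromic ($\R$-regular) elements of $\Gamma$ are themselves Zariski dense in $\mathbf{G}$ (Benoist's density theorem on Jordan projections, or Abels--Margulis--Soifer), this lets me choose a loxodromic $h \in \Gamma$ with $J_h \notin F$. I then consider the family $\mu_t = (1-t)\mu_0 + t\,\delta_h$ for $t \in [0,1)$: each $\mu_t$ has exponential moment and support generating $\Gamma$, so it lies in our admissible class.

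The central claim is that $Liap(\mu_t) \to J_h$ as $t \to 1^-$; combined with $J_h \notin F$ and closedness of $F$, this gives $Liap(\mu_t) \notin F$ for $t$ close enough to $1$, and $\mu := \mu_t$ works. To prove the claim, by the proof of Definition/Proposition~\ref{ch2liapuvector} it suffices to show $L_{\rho_\beta(\mu_t)} \to \log \chi_{\rho_\beta}(a_h)$ for each fundamental representation $\rho_\beta$ from Proposition~\ref{ch2tits}. Since $\rho_\beta$ has a one-dimensional highest weight space, it is proximal; by Zariski density of $\rho_\beta(\Gamma)$ and Theorem~\ref{ch2margulis}, $\rho_\beta(\mu_t)$ generates a strongly irreducible and proximal subgroup of $GL(V_{\rho_\beta})$, and Furstenberg's formula provides the unique $\rho_\beta(\mu_t)$-stationary measure $\nu_t$ on $P(V_{\rho_\beta})$ with
\[
L_{\rho_\beta(\mu_t)} = (1-t)\iint \log\frac{\|\rho_\beta(g)v\|}{\|v\|}\,d\mu_0(g)\,d\nu_t(v) + t\int \log\frac{\|\rho_\beta(h)v\|}{\|v\|}\,d\nu_t(v).
\]
As $t \to 1^-$, the projective dynamics is increasingly dominated by the proximal matrix $\rho_\beta(h)$; its contraction toward the top eigen-direction $[v_\beta^\ast]$ forces $\nu_t \to \delta_{[v_\beta^\ast]}$ weakly, so the second integral tends to $\log \chi_{\rho_\beta}(a_h)$ while the first remains bounded and is scaled by $(1-t) \to 0$.

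The main technical obstacle is the boundary convergence $\nu_t \to \delta_{[v_\beta^\ast]}$ at $t = 1$: one leaves the regime where the standard Le Page analyticity/continuity theorems for Lyapunov exponents apply, because the limit measure $\delta_h$ has cyclic support and no longer generates a strongly irreducible subgroup. Rigorously establishing this convergence requires a quantitative use of the spectral gap of $\rho_\beta(h)$ as a proximal matrix, showing that its contraction on $P(V_{\rho_\beta})$ dominates the rare perturbations from $\mu_0$ weighted by $1-t$, so that any weak accumulation point of $\nu_t$ is $\rho_\beta(h)$-invariant and charges no point of the complementary hyperplane, hence coincides with $\delta_{[v_\beta^\ast]}$.
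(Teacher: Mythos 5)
Your proof follows the same strategy as the paper's: reduce the second assertion to the first via the hyperplanes $H_{ij}=\ker(\log\chi_{\rho_i}-\log\chi_{\rho_j})$, locate a loxodromic $h\in\Gamma$ with $j(h)\notin F$, perturb $\delta_h$ within the admissible class, and invoke continuity of $\mu\mapsto Liap(\mu)$. The paper does exactly this, with $\mu_n=(1-\tfrac1n)\delta_h+\tfrac1n\eta$ in the role of your $\mu_t$, identifies $Liap(\delta_h)=j(h)$ by the spectral radius formula, and settles the continuity step by a citation to Bougerol--Lacroix.

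There are two gaps, though, one of which you flag yourself. First, your argument for locating $h$ does not work for arbitrary $F$: you treat $\{g: J_g\in F\}$ as a proper real algebraic subset of $G$ and invoke Zariski density of loxodromic elements, but $J_g$ is built from logarithms of moduli of eigenvalues, so for a hyperplane in $\mathfrak a$ with irrational normal the condition $J_g\in F$ is not an algebraic condition on $g$. The paper avoids this by appealing to Benoist's two structural results on Jordan projections — the limit cone $l_\Gamma$ has nonempty interior, and $j(\Gamma)$ fills every open subcone of $l_\Gamma$ — which directly imply that $j(\Gamma)$ is not contained in any finite union of hyperplanes, algebraic or not; you should use that instead. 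Second, the gap you acknowledge at the end is a real one: upper semicontinuity of each $L_{\rho_\beta(\mu)}$ is automatic (it is $\inf_n\tfrac1n\E\log\|\rho_\beta(S_n)\|$, an infimum of weak-$*$ continuous functionals), but lower semicontinuity at the degenerate limit $\delta_h$ does not follow from the standard theorems, since a weak accumulation point $\nu_*$ of $\nu_t$ is only known to be $h$-invariant, and $h$-invariant probability measures on $P(V_\beta)$ may charge non-attracting eigendirections of $\rho_\beta(h)$, lowering the Furstenberg integral strictly below $\log\chi_{\rho_\beta}(a_h)$. Your sketch of a quantitative contraction argument using the spectral gap of $\rho_\beta(h)$ is the right idea but is not carried out; the paper disposes of this step by citing the continuity of the Lyapunov vector from Bougerol--Lacroix, and you should either complete your dynamical argument or invoke that reference as the paper does.
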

\begin{proof}[Proof]  We recall the definition of the Jordan projection. Every element $g\in G$ has a decomposition: $g=g_e g_h g_u$ with $g_e$ elliptic (i.e. included in a compact subgroup), $g_h$ hyperbolic (i.e. conjugated
to an element $a(g)$ in $A^+$) and $g_u$ unipotent commuting with $g_h$. The Jordan projection $j: G \longrightarrow \mathfrak{a}^+$ is defined by
$\lambda(g)=\log a(g)$. \\
Y. Benoist proved in \cite{cone1} that
the smallest cone ${l}_\Gamma$ in $\mathfrak{a}^+$ containing $j(\Gamma)$ has a non empty interior. Moreover, he showed in
\cite{cone2} that $j(\Gamma)$ fills completely ${l}_\Gamma$ in the sense that every open cone in ${l}_\Gamma$
contains an infinite elements of $j(\Gamma)$. We deduce that $j(\Gamma)$ cannot be supported on any finite union of hyperplanes in
$\mathfrak{a}$. \\
Let now $F$ be such a finite union of hyperplanes,  $g\in \Gamma$ such that $j(g)\not\in F$. The spectral radius formula shows that $\frac{1}{n} m(g^n) \underset{n \rightarrow \infty}{\longrightarrow }
 j(g)\not\in F$ where $m$ is the Cartan projection (Section \ref{ch2subprel}). This is equivalent to say that the Dirac probability measure $\mu=\delta_g$ supported on $\{g\}$ satisfies $Liap(\mu)\not\in F$.\\
Let us perturb $\mu$ on $\Gamma$, that this  define a sequence of probability measure $\mu_n$ with an exponential moment whose support generates $\Gamma$   such that $\mu_n$ converge weakly
to $\mu$ , for example $\mu_n=(1-1/n)\mu +\eta/n$ where $\eta$ is a probability measure with an exponential moment whose support generates  $\Gamma$. It is easy to see (see for example \cite[Corollary 7.3, page 72-73]{bougerol}) that the Lyapunov vector depends continuously
 on the probability measure
so that $Liap(\mu_n)$ converge to $Liap(\mu)$. Hence, for $n$ big enough,
$\mu_n$ is a probability measure on $\Gamma$ with $Liap(\mu_n)\not\in F$.\\

Now we prove the last part of the lemma. Let $\rho_1,\cdots,\rho_r$ be $r$ rational real irreducible representations of $\mathbf{G}$ and denote by $\chi_{\rho_i}$ the highest weight of $\rho_i$. Recall that the set $\Pi$ of simple roots is a basis of the space $X(A)$  of the rational characters of $A$. Hence for every $i=1,\cdots r$, there exist real numbers $\{n_{i,\alpha};\alpha\in \Pi\}$ with at least one non zero number such that:
 $$\log{\chi_{\rho_i}}=\sum_{\alpha\in \Pi}{n_{i,\alpha}{\log\alpha}}$$
For every $i<j$, denote by $H_{i,j}$ the following hyperplane of  $\mathfrak{a}$:
$$H_{i,j}=\{x\in \mathfrak{a}; \sum_{\alpha\in \Pi}{n_{i,\alpha}\log{\alpha(x)}}=\sum_{\alpha\in \Pi}{n_{j,\alpha}\log{\alpha(x)}}\}$$
Set $F=\cup_{i<j}{H_{i,j}}$. Applying the first of the lemma shows that there exists a probability measure on $\Gamma$ with an exponential moment such that $Liap(\mu)\not\in F$. This ends the proof because
 for every $i=1,\cdots,r$, $$L_{\rho_i(\mu)}=\lim \frac{1}{n} \log{ \chi_{\rho_i}(A_n)}$$
 \end{proof}

\subsection{Estimates in the $A$-part}
The following theorem gives an estimates in the $A$-part of the Cartan decomposition of the random walk.
It can be proved by the same techniques of \cite{aoun}
where the theory of random matrix products is treated over an arbitrary local field.
However, since we are working here in $\R$, we will use another route and apply the large deviation theorem of Le Page \cite{Page} in $GL_d(\R)$
we recall below. First, let us state our result:
\begin{theo}\label{ch2ratioA} [Ratio in the $A$-component] Suppose that $\mu$ has an exponential moment
 then for every $\epsilon>0$ and every non zero weight $\chi$ of $\rho$ distinct from $\chi_\rho$,
\begin{equation}\label{ch2mamisallile}\displaystyle \limsup_{n\rightarrow \infty} \big[\E [( \frac{\chi(A_n)}{\chi_{\rho}(A_n)})^{\epsilon} ]\big]^{\frac{1}{n}}< 1\end{equation}
%In particular, for $i=2...d$:
%$$\displaystyle \limsup_{n\rightarrow \infty} \big[\E \left( \frac{a_i(n)}{a_1(n)} \right)^{\epsilon}\big]^{\frac{1}{n}}< 1$$
Moreover, if $\rho_1$, $\rho_2$ are two irreducible rational real representations of $\mathbf{G}$ such that $L_{\rho_1(\mu)}>L_{\rho_2(\mu)}$ (Definition \ref{ch2liapou}), then for every $\epsilon>0$:
\begin{equation}
\displaystyle \limsup_{n\rightarrow \infty} \big[\E [( \frac{\chi_{\rho_2}(A_n)}{\chi_{\rho_1}(A_n)})^{\epsilon} ] \big]^{\frac{1}{n}}< 1\label{ch2lkle}\end{equation}
\end{theo}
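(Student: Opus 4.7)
The plan is to translate both estimates into statements about singular values of $\rho(S_n)$ (respectively $\rho_1(S_n)$ and $\rho_2(S_n)$) in a good norm on $V$, and then invoke Le Page's large deviation theorem on $GL_d(\R)$ applied to the push-forward $\rho_*\mu$. Equip $V$ with the good norm given by Mostow (Paragraph \ref{ch2mostow}), so that $\rho(K)$ is orthogonal and $\rho(a)$ is diagonal in the weight basis for $a \in A$; the $KAK$ decomposition $S_n = K_n A_n U_n$ then shows that the singular values of $\rho(S_n)$ are exactly the numbers $\chi(A_n)$, with $\chi$ ranging over the weights of $\rho$ and with multiplicity $\dim V_\chi$. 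Since $\mathbf{G}$ is $\R$-split, the highest weight space has dimension one, and since $A_n\in A^+$ the order on weights gives $\chi_\rho(A_n)=\max_\chi \chi(A_n)=||\rho(S_n)||$, while for every other weight $\chi$ one has $\chi(A_n)\leq a_2(\rho(S_n))$, the second largest singular value. To apply Le Page's theorem, I need $\rho(G_\mu)$ to be strongly irreducible and proximal: this follows from Zariski density of $\Gamma$ (which forces $\rho(\Gamma)$ to be Zariski dense in the irreducibly-acting $\rho(\mathbf{G})$), Lemma \ref{ch2split}, and Goldsheid--Margulis (Theorem \ref{ch2margulis}). The exponential moment of $\rho_*\mu$ is provided by Lemma \ref{ch2exponential}, so Le Page's theorem yields constants $\epsilon_0 > 0$, $C > 0$, $r < 1$ with $\E\bigl[(a_2/a_1)(\rho(S_n))^\epsilon\bigr] \leq C r^n$ for every $0 < \epsilon \leq \epsilon_0$; combined with the inequality $\chi(A_n)/\chi_\rho(A_n)\leq a_2/a_1$ this gives (\ref{ch2mamisallile}).

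For the second statement I would apply Le Page's large deviation theorem separately to each of the irreducible proximal strongly irreducible representations $\rho_1$ and $\rho_2$ to obtain, for any $\epsilon > 0$, a constant $c < 1$ with
$$\p\bigl(\chi_{\rho_1}(A_n) < e^{n(L_{\rho_1(\mu)}-\epsilon)}\bigr) \leq c^n \quad \text{and} \quad \p\bigl(\chi_{\rho_2}(A_n) > e^{n(L_{\rho_2(\mu)}+\epsilon)}\bigr) \leq c^n.$$
Fix $\epsilon < (L_{\rho_1(\mu)}-L_{\rho_2(\mu)})/2$ and let $B_n$ be the union of the two bad events, so $\p(B_n) \leq 2c^n$. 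On $B_n^c$ the ratio $\chi_{\rho_2}(A_n)/\chi_{\rho_1}(A_n)$ is at most $e^{-n\delta}$ with $\delta := L_{\rho_1(\mu)}-L_{\rho_2(\mu)}-2\epsilon > 0$, contributing $e^{-n\epsilon\delta}$ to $\E[(\chi_{\rho_2}(A_n)/\chi_{\rho_1}(A_n))^\epsilon]$. On $B_n$, I would use the crucial a priori bound $\chi_{\rho_1}(A_n) \geq 1$, which holds because $\rho_1(\mathbf{G}) \subset SL(V_1)$ forces the product of the singular values of $\rho_1(S_n)$ to equal $1$ and hence the largest to be at least $1$; this yields $\chi_{\rho_2}(A_n)/\chi_{\rho_1}(A_n) \leq ||\rho_2(S_n)||$, and Cauchy--Schwarz bounds
$$\E\bigl[||\rho_2(S_n)||^\epsilon \mathbf{1}_{B_n}\bigr] \leq \bigl(\E||\rho_2(S_n)||^{2\epsilon}\bigr)^{1/2}\p(B_n)^{1/2}.$$
The first factor grows only exponentially in $n$ at a rate proportional to $\epsilon$ (by the exponential moment and submultiplicativity of the norm), while the second is at most $(2c^n)^{1/2}$, so for $\epsilon$ sufficiently small the product is exponentially small, giving (\ref{ch2lkle}).

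The main delicate input in this plan is the \emph{lower} large deviation $\p(\chi_{\rho_1}(A_n) < e^{n(L_{\rho_1(\mu)} - \epsilon)}) \leq c^n$, which is the harder direction of Le Page's theorem and crucially requires the full strength of the exponential moment together with strong irreducibility and proximality of $\rho_1$. The a priori inequality $\chi_{\rho_1}(A_n) \geq 1$ coming from $\det\rho_1 = 1$ then plays a decisive role in the bad-event estimate, since it allows one to absorb the contribution of $B_n$ into a positive moment of $||\rho_2(S_n)||$ alone and thus avoid having to control negative moments of matrix norms.
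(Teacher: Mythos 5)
For (\ref{ch2mamisallile}) you take a genuinely different route from the paper. The paper works through the fundamental representations $\rho_\beta$, $\beta\in\Pi$ (each of which is irreducible, so Theorem \ref{ch2page} applies), uses equation (\ref{ch2nadet}) to get large deviations for $\frac{1}{n}\log\alpha(A_n)$ for each simple root $\alpha$, then expands $\chi_\rho/\chi=\prod_\alpha\alpha^{n_\alpha}$ with $n_\alpha\geq 0$ (not all zero) and invokes Theorem \ref{ch2guimo} to conclude that the limiting value $\lambda$ of $\frac{1}{n}\log(\chi(A_n)/\chi_\rho(A_n))$ is strictly negative; the a priori bound $\chi\leq\chi_\rho$ on $A^+$ then finishes via a Markov-type estimate. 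You instead pass to singular values of $\rho(S_n)$ in a good norm, observe $\chi(A_n)/\chi_\rho(A_n)\leq a_2/a_1$, and claim that Le Page's theorem directly yields $\E\big[(a_2/a_1)^\epsilon\big]\leq Cr^n$. That estimate is true and classical, but it is strictly stronger than Theorem \ref{ch2page} as stated in this paper, which controls only $\frac{1}{n}\log\|S_n\|=\frac{1}{n}\log a_1$ and $\frac{1}{n}\log\|S_nx\|$ and says nothing about the second singular value. You would need either an external citation (e.g.\ the exponential contraction estimates in Bougerol--Lacroix, Chapter V) or a self-contained derivation; the natural internal source, Theorem \ref{ch2conv}, is itself proved \emph{using} Theorem \ref{ch2ratioA}, so appealing to it here would be circular, and a from-scratch derivation would in effect reproduce the paper's fundamental-representation argument. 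This is the one real gap in your write-up.

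For (\ref{ch2lkle}), where the paper only says ``by the same large deviation techniques,'' you supply a complete argument and it is correct: the one-sided large deviations for $\chi_{\rho_1}(A_n)$ (lower) and $\chi_{\rho_2}(A_n)$ (upper) from Theorem \ref{ch2page} applied to $\rho_1,\rho_2$; the key a priori bound $\chi_{\rho_1}(A_n)\geq 1$ coming from $\det\rho_1\equiv 1$ (valid since $\mathbf{G}$ is semi-simple and hence has no nontrivial rational characters); and the Cauchy--Schwarz absorption of the bad event using the exponential moment of $\rho_2(\mu)$. This is a sound and useful fleshing-out of a step the paper leaves implicit, and it neatly sidesteps the issue of negative moments of $\|\rho_1(S_n)\|$, exactly as you flag.
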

%\begin{remarque} The statement should be modified a little to be valid in other local fields than $\R$. For example,
%one can look at the ratio $a_2(\rho(A_n))/a_1(\rho(A_n))$ instead of $\chi(A_n)/\chi_\rho(A_n)$ and assume
%$\rho(\Gamma_\mu)$ to be proximal; $\Gamma_\mu$ being the smallest semi-group containing the support of $\mu$.
%\end{remarque}
Before giving the proof, we recall Le Page large deviation theorem in $GL_d(\R)$:
\begin{theo}\cite{Page}[Large deviations in $GL_d(\R)$]

Let $\mu$ be a probability on $GL_d(\R)$ having an exponential moment and such
 that $G_\mu$ is strongly irreducible. Let  $S_n=X_n\cdots X_1$ be the corresponding random walk. Then for every $\epsilon>0$,
$$\displaystyle \limsup_{n\rightarrow \infty} \big[\p \left(\big|\frac{1}{n} \log ||S_n|| - L_\mu \big| >\epsilon  \right) \big]^{\frac{1}{n}}< 1$$
\label{ch2page}
A similar estimate holds for $\frac{1}{n}\log ||S_n x ||$ for every non zero vector $x\in \R^d$.
\end{theo}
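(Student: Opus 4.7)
The plan is to reduce both inequalities to Le Page's large deviation theorem (Theorem \ref{ch2page}), applied to appropriate irreducible representations of $\mathbf{G}$, combined with the observation that in the split case Mostow's theorem gives $\chi_\rho(A_n) = \|\rho(S_n)\|$ in a good norm (the highest weight space being one-dimensional, and $\chi_\rho(A_n) \geq \chi(A_n)$ for every other weight $\chi$ since $A_n \in A^+$).

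For the first inequality, since $\chi_\rho$ is the highest weight, any other weight $\chi$ satisfies $\chi = \chi_\rho \prod_{\alpha \in \Pi} \alpha^{-n_\alpha}$ with $n_\alpha \in \N$ and at least one $n_\alpha > 0$. Because $\alpha(A_n) \geq 1$ for every $\alpha \in \Pi$, I obtain
$$\frac{\chi(A_n)}{\chi_\rho(A_n)} = \prod_{\alpha \in \Pi} \alpha(A_n)^{-n_\alpha} \leq 1.$$
I would then express $\log \alpha(A_n)$ as an integer combination of the $\log \|\rho_\beta(S_n)\|$, with $\rho_\beta$ from Proposition \ref{ch2tits}, exactly as in the proof of Theorem \ref{ch2guimo}. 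Each $\rho_\beta(\Gamma)$ is strongly irreducible (Zariski density of $\Gamma$ and Zariski connectedness of $\mathbf{G}$), so Le Page applies to $\rho_\beta(\mu)$, which has an exponential moment by Lemma \ref{ch2exponential}. Combined with $\alpha(Liap(\mu)) > 0$ (Theorem \ref{ch2guimo}), this produces for small $\delta > 0$ an event $G_n$ with $\p(G_n^c) \leq c_0^n$ for some $c_0 \in (0,1)$, on which $\alpha(A_n) \geq e^{n(\alpha(Liap(\mu)) - \delta)}$ for every $\alpha \in \Pi$; on $G_n$ the ratio is therefore at most $e^{-n\Delta}$ for some $\Delta > 0$. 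Since the ratio is a priori bounded by $1$,
$$\E\bigl[(\chi(A_n)/\chi_\rho(A_n))^\epsilon\bigr] \leq e^{-n\epsilon\Delta} + c_0^n,$$
which is the claim, uniformly in $\epsilon > 0$.

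For the second inequality, Mostow again gives $\chi_{\rho_i}(A_n) = \|\rho_i(S_n)\|$. I would choose $\delta \in (0, (L_{\rho_1(\mu)} - L_{\rho_2(\mu)})/2)$ and apply Le Page to both $\rho_1(\mu)$ and $\rho_2(\mu)$ to obtain a typical event $G_n$ with $\p(G_n^c) \leq c_0^n$ on which $\frac{1}{n}\log \|\rho_i(S_n)\|$ lies within $\delta$ of $L_{\rho_i(\mu)}$ for $i=1,2$. On $G_n$ the $\epsilon$-power of the ratio is at most $e^{-n\epsilon\Delta}$ with $\Delta = L_{\rho_1(\mu)} - L_{\rho_2(\mu)} - 2\delta > 0$. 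On $G_n^c$ the ratio is no longer pointwise bounded by $1$; here I would use $\|\rho_1(S_n)\| \geq 1$ (since $\rho_1(S_n) \in SL(V_1)$) to bound the ratio by $\|\rho_2(S_n)\|$ and then apply Hölder: for $p > 1$ with $p\epsilon$ below the exponential moment threshold of $\rho_2(\mu)$, submultiplicativity of the norm yields $\E[\|\rho_2(S_n)\|^{p\epsilon}] \leq K(p\epsilon)^n$ where $K(t) = \E[\|\rho_2(X_1)\|^t] \to 1$ as $t \to 0$; choosing $p$ close to $1$ and $\epsilon$ small makes $K(p\epsilon)^{1/p} c_0^{1/q} < 1$.

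The main obstacle is extending the second inequality to every $\epsilon > 0$, since the Hölder step only handles $\epsilon$ below a threshold determined by the exponential moment of $\mu$. To cover larger $\epsilon$ I would peel the tail event $G_n^c$ into successive deviation shells, on each of which the ratio is controlled deterministically by a geometric factor and whose probabilities decay super-exponentially in the shell index by the full Le Page large deviation principle; summing these contributions then produces an exponential bound valid for every $\epsilon > 0$. The first inequality escapes this difficulty entirely because the ratio is a priori in $[0,1]$, so no moment argument on the tail is required.
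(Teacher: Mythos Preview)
Your argument is not a proof of the stated theorem. Theorem~\ref{ch2page} is Le~Page's large deviation principle, which the paper quotes from \cite{Page} and does not prove; your proposal \emph{invokes} Theorem~\ref{ch2page} as a black box (``reduce both inequalities to Le~Page's large deviation theorem''), so as a proof of that statement it is circular.

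What you have actually written is a proof of Theorem~\ref{ch2ratioA}, the ratio estimates for $\chi(A_n)/\chi_\rho(A_n)$ and $\chi_{\rho_2}(A_n)/\chi_{\rho_1}(A_n)$. Compared with the paper's own proof of Theorem~\ref{ch2ratioA}, your treatment of the first inequality is essentially identical: express $\log\alpha(A_n)$ through the fundamental representations $\rho_\beta$ as in equation~(\ref{ch2nadet}), apply Le~Page to each $\rho_\beta(\mu)$, use $\alpha(Liap(\mu))>0$ from Theorem~\ref{ch2guimo}, and conclude via the a~priori bound $\chi(A_n)/\chi_\rho(A_n)\leq 1$ on the bad event. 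For the second inequality the paper says only ``by the same large deviation techniques''; you correctly notice that $\chi_{\rho_2}(A_n)/\chi_{\rho_1}(A_n)$ is \emph{not} a~priori bounded by $1$, and you supply a H\"older argument using the exponential moment on the complementary event. That is a genuine refinement of the paper's sketch. Your concern about extending to all $\epsilon>0$ is legitimate for the statement as written, but note that the only place Theorem~\ref{ch2ratioA} is used (the proof of Theorem~\ref{ch2theo1}) requires merely the probability bound $\p\big(\chi_{\rho_2}(A_n)/\chi_{\rho_1}(A_n)\geq \epsilon_1^n\big)\leq \epsilon_2^n$, which follows directly from the large deviation inequality with no moment control on the bad event; the large-$\epsilon$ difficulty is therefore immaterial for the paper's purposes.
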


\begin{proof}[Proof of Theorem \ref{ch2ratioA}]
%We can prove Theorem \ref{ch2ratioA} using techniques in \cite{aoun}
%where the theory is treated in an arbitrary local field of characteristic zero.
%However, since we are working in $\R$, we can use Le Page's large deviations theorem, Theorem \ref{ch2page}.

% Let $\alpha\in \Pi$ and express $\alpha$ in terms of the fundamental weights (Proposition \ref{funda}),
% $\alpha=\prod_{\beta \in \Pi}{w_\beta^{n_\beta}}$ where $n_\beta\in \Z$ for every $\beta \in \Pi$.
% For every $\beta\in \Pi$, consider the rational real irreducible representation $(\rho_\beta, V_\beta)$
%   given by Proposition \ref{tits} and a good norm on $V_beta$ (Paragraph \ref{mostow}).
%   By the definition of $\rho_\beta$, there exists an integer $l_\beta$ such that for every $n\in \N^*$,
%     $||\rho_\beta(S_n)||=w_\beta^{l_\beta}(A_n)$ so  that,
%$$\frac{1}{n}\log \;\alpha(A_n)=\sum_{\beta\in \Pi}{\frac{n_\beta}{l_\beta}\;
%\frac{1}{n}\log \;||\rho_\beta(S_n)||}$$

 For every $\beta\in \Pi$, a similar large deviation inequality as in Theorem \ref{ch2page} holds for the quantity $\frac{1}{n} \log ||\rho_\beta(S_n)||$ because $\rho_\beta$ is strongly irreducible and $\rho_\beta(\mu)$ has an exponential moment by
Lemma \ref{ch2exponential}.  Hence by equation (\ref{ch2nadet})  a large deviation inequality holds for $\frac{1}{n} \log \alpha(A_n)$ for every $\alpha\in \Theta$.
Since $\chi_\rho/\chi= \prod_{\alpha\in \Pi}{\alpha^{n_\alpha}}$ for non-negative integers $\{n_\alpha; \alpha\in \Pi\}$,
  we get for $\lambda= - \sum_{\alpha\in \Pi}{n_\alpha\;\lim_{n\rightarrow\infty}\frac{1}{n} \log\;\alpha(A_n)}$ and for every $\epsilon>0$,
 \begin{equation}\label{ch2ma3ash}\displaystyle \limsup_{n\rightarrow \infty}  \big[\p \left( \big|\frac{1}{n} \log\; \frac{\chi(A_n)}{\chi_\rho(A_n)} - \lambda \big| >\epsilon \right) \big]^{\frac{1}{n}}< 1\end{equation}

By Theorem \ref{ch2guimo}, $\lambda<0$. Hence, by  relation  (\ref{ch2ma3ash}), there exists $\rho_1,\rho_2\in ]0,1[$ such that for all large $n$:
$\p \left( \frac{\chi(A_n)}{\chi_\rho(A_n)} \geq  \rho_1^n \right) \leq \rho_2^n $.
Since $\chi(a)\leq \chi_\rho(a)$ for every $a\in A^+$, we get for every $\epsilon>0$,
 $\E \Big[\left( \frac{\chi(A_n)}{\chi_\rho(A_n)} \right)^\epsilon\Big] \leq \rho_1^{\epsilon n}+\rho_2^n$. This shows (\ref{ch2mamisallile}).\\
By the same  large deviation techniques, one can show (\ref{ch2lkle}).\end{proof}

\subsection{Estimates in the $K$-parts}
Recall that we fix  a measurable section of the Cartan decomposition $G \rightarrow KAK$ and the corresponding decomposition of  the random walk $S_n$ is denoted by $S_n=K_nA_nU_n$.
Our next task is to prove the following theorem which gives the convergence in the $K$-parts of the Cartan decomposition
of the random walk.\\
This result was proved in our previous work \cite[Theorem 4.33]{aoun} over an arbitrary local field. We give here another proof special to archimedean fields.
\begin{theo}\label{ch2conv}[Exponential convergence of the $K$-components]
Suppose that $\mu$ has an exponential moment and
$\rho$ is proximal. Let $v_\rho$ be a highest weight vector. Then there exists a random variable $Z$ on the projective space $P(V)$ such that for every $\epsilon>0$:

$$\displaystyle \limsup_{n\rightarrow \infty}\big[\E\left( {\delta (U_n^{-1}\cdot [v_\rho],Z) }^{\epsilon} \right)\big]^{\frac{1}{n}} < 1$$
 Here, for $M\in GL(V)$, we have denoted by $M^t$ the transpose matrix of $M$ with respect to the basis of weights. We recall that $\delta$ is the Fubini-Study distance (see the beginning of Section \ref{ch2subprelprob}).
A similar estimate holds if we replace $U_n$ with $k(X_1\cdots X_n)$ where $k(g)$ is the $K$-component of $g\in G$ for
the fixed $KAK$ decomposition in $G$.
\end{theo}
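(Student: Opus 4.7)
The plan is to show that $Z_n := \rho(U_n)^{-1} \cdot [v_\rho]$ is a Cauchy sequence in the Fubini--Study metric $\delta$ at an exponential rate in expectation, and to take $Z$ to be its almost sure limit. The central interpretation is that $Z_n$ is the top right singular direction of $\rho(S_n)$. Indeed, in the good norm provided by Mostow's theorem (Paragraph \ref{ch2mostow}) the weight basis is orthonormal, $\rho(K_n), \rho(U_n) \in O(V)$, and $\rho(A_n)$ is diagonal with entries $\chi(A_n)$; since $A_n \in A^+$ and $\rho$ is proximal, the entry $\chi_\rho(A_n)$, corresponding to the one-dimensional highest weight line spanned by $v_\rho$, strictly dominates the others. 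Thus $\rho(S_n)=\rho(K_n)\rho(A_n)\rho(U_n)$ is essentially a singular value decomposition, and $Z_n$ is precisely the line spanned by the top eigenvector of the positive symmetric matrix $\rho(S_n)^t \rho(S_n)$.

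To compare $Z_n$ and $Z_{n+1}$, we exploit $S_{n+1} = X_{n+1} S_n$, which gives
$$\rho(S_{n+1})^t \rho(S_{n+1}) \;=\; \rho(U_n)^{-1} \rho(A_n)\, B_n\, \rho(A_n) \rho(U_n), \qquad B_n := \rho(K_n)^{-1}\rho(X_{n+1})^t\rho(X_{n+1})\rho(K_n).$$
Since $\rho(U_n)^{-1}$ is an orthogonal transformation and hence preserves $\delta$, we have $\delta(Z_n, Z_{n+1}) = \delta([v_\rho], [v^*])$, where $v^*$ is the top eigenvector of $\rho(A_n) B_n \rho(A_n)$. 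This positive symmetric matrix has $(i,j)$ entries $\chi_i(A_n) (B_n)_{ij} \chi_j(A_n)$, so its $(1,1)$ entry $\chi_\rho(A_n)^2 (B_n)_{11}$ dominates all others once the ratios $\chi_i(A_n)/\chi_\rho(A_n)$ for $i\neq 1$ are small compared with $(B_n)_{11}/\|B_n\|$. Elementary perturbation theory for symmetric matrices with a dominant diagonal then gives, on this favorable event,
$$\delta([v_\rho], [v^*]) \;\leq\; C \, \Big( \max_{\chi \neq \chi_\rho} \frac{\chi(A_n)}{\chi_\rho(A_n)} \Big) \, \frac{\|B_n\|}{(B_n)_{11}},$$
where $(B_n)_{11} = \|\rho(X_{n+1})\rho(K_n) v_\rho\|^2 \geq \|\rho(X_{n+1})^{-1}\|^{-2}$ and $\|B_n\| \leq \|\rho(X_{n+1})\|^2$; on the complementary event we use the trivial bound $\delta \leq 1$.

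Taking the $\epsilon$-th moment for $\epsilon>0$ small and applying H\"older's inequality splits the resulting estimate into moments of $\|\rho(X_{n+1})\|$, $\|\rho(X_{n+1})^{-1}\|$ on the one hand and of $\max_{\chi \neq \chi_\rho} \chi(A_n)/\chi_\rho(A_n)$ on the other. The former are finite by the exponential moment assumption on $\mu$ together with the polynomial control of $\|\rho(g^{-1})\|$ by $\|\rho(g)\|$ coming from the rationality of $g\mapsto g^{-1}$ on $\mathbf{G}$ (so Lemma \ref{ch2exponential} applies to $g\mapsto \rho(g^{-1})$); the latter is exponentially small in $n$ by Theorem \ref{ch2ratioA}. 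Combining these bounds yields $\E[\delta(Z_n,Z_{n+1})^\epsilon] \leq C r^n$ for some $r \in (0,1)$. Since $\delta^\epsilon$ is subadditive for $\epsilon \leq 1$, $(Z_n)$ is Cauchy in $L^\epsilon$, hence converges almost surely to a random variable $Z$ with $\E[\delta(Z_n,Z)^\epsilon] \leq C' r^n$ by telescoping. For the companion statement on $k(X_1 \cdots X_n)\cdot [v_\rho]$, which is the top \emph{left} singular direction of the backward walk $\rho(X_1)\cdots \rho(X_n)$, the recurrence $S'_{n+1} = S'_n X_{n+1}$ now multiplies on the right, and a symmetric perturbation argument applied to $\rho(S'_n)\rho(S'_n)^t$ in place of $\rho(S_n)^t\rho(S_n)$ gives the analogous bound. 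The main technical step is the moment control of $(B_n)_{11}^{-1}$, which amounts to ensuring that $\rho(X_{n+1}) \rho(K_n) v_\rho$ is not too short; this is exactly what the moment of $\|\rho(X_{n+1})^{-1}\|^2$, integrated against the independent past, provides.
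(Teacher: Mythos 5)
Your proof is correct and follows essentially the same strategy as the paper's: show that $Z_n := U_n^{-1}\cdot[v_\rho]$ is Cauchy in $L^\epsilon$ at an exponential rate, using the one-step recursion $S_{n+1}=X_{n+1}S_n$, the ratio estimate of Theorem \ref{ch2ratioA} for $\chi(A_n)/\chi_\rho(A_n)$, and the exponential moment of $\mu$ to control $\|\rho(X_{n+1})\|$ and $\|\rho(X_{n+1})^{-1}\|$. The only difference is cosmetic: where you invoke eigenvector perturbation theory for the dominant direction of $\rho(A_n)B_n\rho(A_n)$, the paper obtains the same bound directly by estimating $\|S_{n+1}x\|$ from above via $\|\rho(X_{n+1})\|\chi(A_n)$ and from below via the orthogonal projection onto $U_{n+1}^{-1}\cdot V_{\chi_\rho}$, which is a self-contained substitute for the lemma you cite without detail.
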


\begin{proof}[Proof] We recall that by Mostow theorem, there exists a scalar product $\langle\cdot\rangle$ on $V$ such that the weight spaces are orthogonal and $K$ acts by isometries and that we choose an orthonormal  basis in each weight space so that $\rho(K)\rho(K)^t$ is the trivial group. \\
 For every $n\in \N^*$, every non zero weight $\chi$, we denote by $Q_\chi(n)$ the orthogonal projection on the space
 $U_n^{-1}\cdot V_\chi= \rho(U_n)^tV_\chi$. In particular $Q_{\chi_\rho}(n)$ is the projection on the line $\R U_n^{-1}\cdot v_{\rho}$, where
  $v_\rho$ is a highest weight vector (it is one-dimensional because $\rho$ is proximal).
  We will show that for every $\epsilon>0$ small enough:
\begin{equation}
\displaystyle \limsup_{n\rightarrow \infty}\big[\E(||Q_{\chi_\rho}(n) - Q_{\chi_{\rho}}(n+1)||^{\epsilon})\big]^{\frac{1}{n}}< 1\label{ch2yara}\end{equation}
This ends the proof because if $x$ and $y$ are two non zero vectors of $V$
and $Q_x$ and $Q_y$ are the orthogonal projections on the lines $\R x$ and $\R y$,
then $||Q_x - Q_y||\geq \frac{1}{2}\delta([x],[y])$, so that (\ref{ch2yara}) would imply by the Markov property
that
%\begin{equation}\displaystyle \limsup_{n\rightarrow \infty}\big[\E\left(\delta^\epsilon (U_n^{-1}\cdot [v_\rho],U_{n+1}^{-1}\cdot [v_\rho])\right)\big]^{\frac{1}{n}}<1\label{yassou3amali}\end{equation}
 $\{U_n^{-1}\cdot [v_\rho];\;n\geq 0\}$ is a.s. a Cauchy series in the projective
 space $P(V)$. Hence it converges to some variable $Z$. By Fatou lemma and the triangular inequality, we get for some $t=t(\epsilon)\in ]0,1[$ and all large $n$:
$\E\left({\delta (Z,U_{n}^{-1}\cdot [v_\rho])}^\epsilon\right)\leq \liminf_{m \rightarrow \infty}\;\E\left({\delta (U_{m}^{-1}\cdot [v_\rho],U_{n}^{-1}\cdot [v_\rho])}^\epsilon\right)\leq {t^n}$. \\
Now we prove (\ref{ch2yara}). For every $n\in \N^*$, $\sum_{\chi}{Q_\chi}$ is the identity operator,
where the sum is over all the non zero weights of $(\rho,V)$. Moreover, two orthogonal projections commute, hence:
 $||Q_{\chi_\rho}(n) - Q_{\chi_\rho}(n+1)||\leq \sum_{\chi\neq \chi_\rho} { ||Q_{\chi_\rho}(n+1)Q_{\chi}(n)||+ ||Q_{\chi_\rho}(n)Q_{\chi}(n+1)|| }$. \\
Fix a weight $\chi\neq \chi_\rho$. First we show that  $\E(||Q_{\chi_\rho}(n+1)Q_{\chi}(n)||^\epsilon)$ is sub-exponential for every $\epsilon>0$ small enough. This is equivalent to prove that there exists $\eta\in ]0,1[$ such that for all large $n$:

$$\E \left( \big[\displaystyle{\sup_{x \in U_n^{-1}\cdot V_\chi; ||x||=1}{|Q_{\chi_\rho}(n+1) (x)|}}\big]^\epsilon \right) \leq \eta^n $$
Let $x\in U_n^{-1}\cdot V_\chi$ of norm one and $y_n=Q_{\chi_\rho}(n+1)(x)$, i.e. the orthogonal projection of
 $x$ on the line $U_{n+1}^{-1}\cdot V_{\chi_\rho}$.  Now we evaluate
 $||S_{n+1}\cdot x||$ in two different ways. On the one hand,
  \begin{equation}||S_{n+1}\cdot x||=||X_{n+1}S_n\cdot x||\leq ||\rho(X_{n+1})|| \;||S_n\cdot x||=||\rho(X_{n+1})||\;\chi(A_n)\label{ch21}\end{equation}
On the other hand,  $\langle S_{n+1}\cdot (x-y_n),S_{n+1}\cdot y_n\rangle=\langle(x-y_n),S_{n+1}^tS_{n+1}\cdot y_n\rangle=0$ because
$x-y_n \perp U_{n+1}^{-1}\cdot V_{\chi_\rho}$ and if $y_n=U_{n+1}^{-1}\cdot  z_n$ for some $z_n\in V_{\chi_\rho}$, then
$S_{n+1}^tS_{n+1}\cdot y_n = U_{n+1}^{-1}A_{n+1}^2 U_{n+1}U_{n+1}^{-1}\cdot z_n={\chi_\rho} ^2(A_{n+1}) y_n\in U_{n+1}^{-1}\cdot V_{\chi_\rho}$.
Hence \begin{equation}\label{ch22}||S_{n+1}\cdot x||=\sqrt{||S_{n+1}\cdot  y_n||^2+||S_{n+1}\cdot  (x-y_n)||^2} \geq
||S_{n+1}\cdot y_n||={\chi_\rho}(A_{n+1})\;||y_n||\end{equation}
Combining (\ref{ch21}) and (\ref{ch22}) gives:
$$\displaystyle{\sup_{x \in U_n^{-1}\cdot V_\chi; ||x||=1}{||Q_{\chi_\rho}(n+1) (x)||}} =||y_n|| \leq ||\rho(X_{n+1})|| \;\frac{\chi(A_{n})}{\chi_\rho(A_{n+1})}$$
But for every $p\in \N^*$, $||\rho(S_p)||=\chi(A_p)$ (because the norm on $V$ is $K$-invariant). Hence,
\begin{equation}\label{ch2badama3}\displaystyle{\sup_{x \in U_n^{-1}\cdot V_\chi; ||x||=1}{||Q_{\chi_\rho}(n+1) (x)||}} =||y_n|| \leq ||\rho(X_{n+1})||\cdot  ||\rho(X_{n+1}^{-1})|| \;\frac{\chi(A_{n})}{\chi_\rho(A_{n})}\leq ||\rho(X_{n+1})||^d \;\frac{\chi(A_{n})}{\chi_\rho(A_{n})} \end{equation}
Last inequality is due to the relation $||g^{-1}||\leq ||g||^{d-1}$ true for every $g\in SL_d(k)$.
% $\frac{\chi(A_{n+1})}{\chi_\rho(A_{n})}= \frac{||\rho(S_{n+1})||}{||\rho(S_n)||}\;\frac{\chi(A_{n+1})}{\chi_\rho(A_{n+1})}\leq ||\rho(Y_{n+1})||\;\frac{\chi(A_{n+1})}{\chi_\rho(A_{n+1})}$, so that:
%\begin{equation}\displaystyle{\sup_{x \in U_n^*\cdot V_\chi; ||x||=1}{|Q_{\chi_\rho}(n+1) (x)|}} =||y_n||\leq ||\rho(Y_{n+1})||^2 \;\frac{\chi(A_{n+1})}{\chi_\rho(A_{n+1})}\label{badama3}\end{equation}
By Lemma \ref{ch2exponential}, the probability measure $\rho ( \mu)$ has an exponential moment so that
there exists $C\geq 1$ such that for all $\epsilon>0$ small enough  $\E (||\rho(X_{n+1})||^\epsilon)< C$. By Theorem \ref{ch2ratioA},
 for every $\epsilon>0$ small enough, some $\eta(\epsilon)\in ]0,1[$ and all $n$ large enough:
$\E \Big[\left(\frac{\chi(A_{n})}{\chi_\rho(A_{n})}\right)^\epsilon\Big] \leq \eta(\epsilon)^n$.
It suffices to apply Cauchy-Schwartz inequality to (\ref{ch2badama3}) to obtain the sub-exponential behavior of
$\E \left( ||Q_{\chi_\rho}(n)Q_{\chi}(n+1)||^\epsilon \right)$.

To bound $\E(||Q_{\chi_\rho}(n)Q_{\chi}(n+1)||^\epsilon)$ we apply the same reasoning as above: we fix $x\in V_{\chi}(n+1)$ of norm one
 and denote by $y_n$ its projection on $U_n^{-1}\cdot V_{\chi_\rho}$. Then, we evaluate $||S_n\cdot x||$ in two ways:
$$||S_n\cdot x||=||X_{n+1}^{-1}S_{n+1}\cdot x||\leq {||\rho(X_{n+1}^{-1})||}\;\chi (A_{n+1})$$
$$||S_n\cdot x||=\sqrt{||S_n\cdot (x-y_n)||^2 + ||S_n \cdot y_n||^2}\geq ||S_n\cdot y_n||= ||y_n||\chi_{\rho}(A_n)$$
The end of the proof is the same as above. For the law of $Z$, see the following remark.
\end{proof}
\begin{remarque} \label{ch2conkak}[Identification of the limit] By the Markov inequality and the Borel-Cantelli lemma,
Theorem \ref{ch2conv} shows that $U_n^{-1}[v_\rho]$ converges towards some
random variable $Z$.
In fact, the law of $Z$ is the unique $\rho(\mu)^t$-invariant probability measure on $P(V)$ (see for example \cite[Proposition 3.2 page 50]{bougerol}).
 \end{remarque}

We will also use the following lemma:
\begin{lemme} Let $\mu$ be a probability measure on $GL_d(\R)$ with an exponential moment, such that the smallest closed group $G_\mu$ containing the support is $\mu$ is strongly irreducible and proximal, then
 \begin{equation}\limsup_{n\rightarrow +\infty} \frac{1}{n} \log {\p \left(S_n [x] \in H \right)}< 0\label{decr}\end{equation}
 uniformly on $x\in \R^d \setminus \{0\}$ and the hyperplanes $H$ of $\R^d$. \label{ch2hyperplane}\end{lemme}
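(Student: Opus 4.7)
The plan is to obtain a pointwise lower bound on $|\langle S_n x, y\rangle|$ from the Cartan decomposition and then to combine Theorems \ref{ch2ratioA} and \ref{ch2conv} with Guivarc'h's regularity of the Furstenberg measure. Normalize so that $\|x\|=1$ and let $y$ be a unit vector with $H=y^\perp$, so that $S_n[x]\in H\Longleftrightarrow\langle S_n x,y\rangle=0$. Writing $S_n=K_n A_n U_n$ with $A_n=\mathrm{diag}(a_1(n),\ldots,a_d(n))$ in the standard orthonormal (and weight) basis $(e_1,\ldots,e_d)$ of $\R^d$, expanding the inner product, and applying Cauchy--Schwarz on the tail gives the deterministic lower bound
\[
|\langle S_n x, y\rangle|\;\geq\; a_1(n)\,|u_1(n)\,v_1(n)|\;-\;a_2(n),
\]
where $u_1(n):=\langle x,U_n^{-1}e_1\rangle$ and $v_1(n):=\langle y,K_n e_1\rangle$. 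Hence $\langle S_n x,y\rangle=0$ forces $|u_1(n)v_1(n)|\leq a_2(n)/a_1(n)$, so for any $\rho\in(0,1)$,
\[
\p(S_n[x]\in H)\;\leq\;\p\!\Bigl(\tfrac{a_2(n)}{a_1(n)}\geq\rho^n\Bigr)+\p(|u_1(n)|\leq\rho^{n/2})+\p(|v_1(n)|\leq\rho^{n/2}).
\]

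I will bound the three terms separately. For the first, Theorem \ref{ch2ratioA} applied to the natural representation of $\mathbf{GL}_d(\R)$ (the required strong irreducibility and proximality of $G_\mu$ transfer to its Zariski closure by Theorem \ref{ch2margulis}), combined with Markov's inequality, yields exponential decay. For the other two terms, observe that $|u_1(n)|=\delta(U_n^{-1}[e_1],P(x^\perp))$ and $|v_1(n)|=\delta(K_n[e_1],P(y^\perp))$ in the Fubini--Study distance. Theorem \ref{ch2conv} applied to the natural representation gives exponential $L^\varepsilon$-convergence of $U_n^{-1}[e_1]$ to a random variable $Z$ whose law (Remark \ref{ch2conkak}) is the unique $\mu^t$-stationary probability measure $\check\nu$ on $P(\R^d)$. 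Applying the same theorem to the random walk associated to the transposed measure $\mu^t$ (which inherits all the hypotheses, since transposition preserves eigenvalues and commutes with taking the Zariski closure), and observing that under this substitution the role of $U_n^{-1}[e_1]$ is played by $K_n[e_1]$ (via the identity $(K_n^t)^{-1}=K_n$ valid because $K_n$ is orthogonal), I obtain the exponential convergence in distribution of $K_n[e_1]$ to a limit with law the unique $\mu$-stationary probability measure $\nu$ on $P(\R^d)$.

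The final input is Guivarc'h's H\"older regularity of the stationary Furstenberg measures: under our hypotheses there exist $C,\alpha>0$ such that
\[
\nu(\{[z]:\delta([z],H')\leq\eta\})\leq C\eta^{\alpha}
\]
uniformly in projective hyperplanes $H'\subset P(\R^d)$, and the same holds for $\check\nu$ (see, e.g., \cite[Chapter VI]{bougerol}). Combining this with the previous exponential convergences via the triangle inequality and Markov's inequality at scale $\rho^{n/2}$ shows that $\p(|u_1(n)|\leq\rho^{n/2})$ and $\p(|v_1(n)|\leq\rho^{n/2})$ each decay exponentially, uniformly in $x$ and in the hyperplane $H$. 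A suitable choice of $\rho\in(0,1)$ then yields the lemma. The main obstacle is precisely this Guivarc'h regularity estimate: it is the one essential analytic input and the only place where the full combination of strong irreducibility, proximality, and exponential moment is used; the rest is a routine packaging of the quantitative Cartan-decomposition estimates established in Theorems \ref{ch2ratioA} and \ref{ch2conv}.
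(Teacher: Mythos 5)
Your proof is correct in substance but takes a genuinely different and heavier route than the paper's. The paper's argument (which you should compare with) is short and direct: it works with $[S_n x]$ itself, quoting Bougerol--Lacroix Chapter~V, Theorem~2.5 for the exponential-speed convergence in law of $S_n[x]$ towards the unique $\mu$-stationary measure $\nu$ on $P(\R^d)$ (tested against H\"older functions), and then Bougerol--Lacroix Chapter~VI, Corollary~4.2 for the H\"older regularity $\nu(\{\delta(\cdot,H)\le\epsilon\})\le C\epsilon^\alpha$, combining the two by testing a cutoff of $\mathds{1}_{\{\delta(\cdot,H)\le\eta^n\}}$ against the convergence estimate. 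You instead redo a Cartan-decomposition analysis parallel to the paper's own proof of Proposition~\ref{ch2propp}: decompose, get the pointwise inequality $|\langle S_n x,y\rangle|\ge a_1(n)|u_1v_1|-a_2(n)$, then treat the $A$-ratio via Theorem~\ref{ch2ratioA}, the $U$-direction via Theorem~\ref{ch2conv}, the $K$-direction via Theorem~\ref{ch2conv} applied (in distribution) to the transposed walk, and close with Guivarc'h regularity (Theorem~\ref{ch2hausdorff}). This is a valid and more self-contained route which exposes the Cartan mechanism, at the cost of length. Two small imprecisions worth flagging: (i) the $K_n$-convergence does not come from a literal identification $K_n[e_1]=\check U_n^{-1}[e_1]$ for the $\mu^t$-walk $\check S_n=X_n^t\cdots X_1^t$ -- the Cartan factors of $\check S_n$ and of $S_n$ are unrelated $\omega$-by-$\omega$; what is true, and suffices for your purposes, is that for each fixed $n$ one has $\check S_n\overset{d}{=}S_n^t=U_n^{-1}A_nK_n^{-1}$, so $K_n[e_1]$ has the same law as $\check U_n^{-1}[e_1]$, giving only a convergence in distribution at exponential rate (which is all the union-bound requires); (ii) Theorem~\ref{ch2ratioA} is stated for representations of semi-simple $\mathbf G$, whereas here $\mu$ lives in $GL_d(\R)$; to be rigorous one should instead apply Le Page's large deviation theorem (Theorem~\ref{ch2page}) directly to $\log\|S_n\|$ and $\log\|\bigwedge^2 S_n\|$, using proximality ($\lambda_1>\lambda_2$) to get the negative drift of $\frac1n\log(a_2(n)/a_1(n))$, which is exactly what Theorem~\ref{ch2ratioA}'s proof does.
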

 \begin{remarque}In \cite[Theorem 4.18]{aoun}, we have proved the previous lemma over an arbitrary local field.
 We will give here a short proof since we are working in the field of real numbers.
\end{remarque}
\begin{proof} With the assumptions of the lemma,  $S_n[x]$ converges in law towards a random variable $Z$
with law the unique $\mu$-invariant probability measure $\nu$ on the projective space $P(\R^d)$. Moreover,
the convergence is with exponential speed in the following sense (see \cite[Chapter V, Theorem 2.5]{bougerol}):
 there exists $\alpha>0$ such that for every $\alpha$-holderian function on $P(\R^d)$,

 $$\Big|\E \Big[(f\left(\rho(S_n)[x] \right) \Big) - \int{f d\nu} \Big|\leq ||f||_\alpha \rho^n$$
 where $$||f||_\alpha=\underset{[x]\neq [y]\in P(\R^d)}{\sup}{|f([x])-f([y])|}{\delta^\alpha([x],[y])}$$
 and $\delta(\cdot, \cdot)$ is the Fubiny-Study distance on the projective space $P(\R^6)$.
 But the limiting measure $\nu$ has some regularity, its Hausdorff dimension is positive and satisfies:

$$\underset{\textrm{$H$ hyperplanes in $R^6$}}{\sup} \nu \left(\{[x]\in P(\R^6); \delta([x],H)\leq \epsilon\} \right)\leq C \epsilon^\alpha$$
for some $C,\alpha>0$ (see \cite[Chapter VI, Corollary 4.2]{bougerol}).
We can now easily conclude.\end{proof}

Finally, we quote a useful result  from \cite{aoun}.
\begin{theo}\cite[Theorem 4.35]{aoun}[Asymptotic independence of the $K$-components] With the same assumptions as in Theorem \ref{ch2conv},
there exist
 \textbf{independent random variables} $Z$ and $T$ with respective laws the unique $\rho(\mu)^t$ (resp. $\rho(\mu)$)-
 invariant probability measure on $P(V)$  such that for every $\epsilon>0$,
    every $\epsilon$-holder (real) function $\phi$ on $P(V)\times P(V)$ and all large $n$ we have:
$$\big|\E\left(\phi([U_n^{-1}\cdot v_{\rho}],[K_n\cdot v_\rho]) \right) - \E \left( \phi(Z,T) \right) \big|\leq ||\phi||_{\epsilon} \rho(\epsilon)^n $$
where \;\;\;\;\;$||\phi||_\epsilon=
   \underset{{ [x],[y],[x'],[y']}}{Sup} \;{\frac{\big|\phi([x],[x'])-\phi([y],[y'])\big|}{\delta
 ([x],[y])^{\epsilon}+\delta([x'],[y'])^{\epsilon}}}$.
\label{ch2independence}\end{theo}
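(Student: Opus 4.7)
The plan is to split the random walk at its midpoint and exploit the resulting independence of past and future. Set $m = \lfloor n/2 \rfloor$, $S' := X_m \cdots X_1$ and $S'' := X_n \cdots X_{m+1}$, so that $S_n = S'' S'$ with $S'$ and $S''$ independent; write their Cartan decompositions as $S' = K' A' U'$ and $S'' = K'' A'' U''$. The strategy is that the ``output'' direction $[K_n \cdot v_\rho]$ is essentially determined by $S''$ (the later random variables), while the ``input'' direction $[U_n^{-1} \cdot v_\rho]$ is essentially determined by $S'$ (the earlier random variables). Since $S'$ and $S''$ are independent, the asymptotic independence of the two limits will follow from this separation of dependence.

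The heart of the proof is to establish, for every small $\epsilon > 0$ and some $t = t(\epsilon) \in (0,1)$, the two exponential approximation estimates
$$ \E\bigl[\delta([K_n \cdot v_\rho], [K'' \cdot v_\rho])^{\epsilon}\bigr] \le t^n, \qquad \E\bigl[\delta([U_n^{-1} \cdot v_\rho], [(U')^{-1} \cdot v_\rho])^{\epsilon}\bigr] \le t^n. $$
For the first, proximality of $\rho$ yields, for any $g = KAU \in G$ and any probe vector $w$ with $\langle U w, v_\rho^* \rangle \neq 0$, the projective estimate
$$ \delta\bigl([g\cdot w], [K\cdot v_\rho]\bigr) \;\le\; \frac{C \, \|w\|}{|\langle U w, v_\rho^* \rangle|} \cdot \max_{\chi \neq \chi_\rho} \frac{\chi(A)}{\chi_\rho(A)}. $$
Applying this to both $g = S_n$ and $g = S''$ (with probe vector $w = S' \cdot w_0$ in the latter case, so that $[S_n \cdot w_0] = [S'' \cdot (S'\cdot w_0)]$), one deduces that both $[K_n v_\rho]$ and $[K''v_\rho]$ lie close to the common vector $[S_n\cdot w_0]$. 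The error bound for $\delta([K_n v_\rho],[K'' v_\rho])$ is then a sum of the two weight-ratio maxima of $A_n$ and $A''$, modulated by inverse non-degeneracy factors: Theorem \ref{ch2ratioA} controls the ratios in $\epsilon$-moment, while Lemma \ref{ch2hyperplane} shows that the non-degeneracy factors are polynomially bounded outside an event of exponentially small probability. The second estimate is obtained by applying the identical reasoning to the transpose $S_n^t = (S')^t (S'')^t$, whose left $K$-component is precisely $U_n^{-1}$ (and that of $(S')^t$ is $(U')^{-1}$).

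With these approximations in hand, Theorem \ref{ch2conv} (in its original form and in the $K$-analog stated at the end of it) provides exponential convergence of $[(U')^{-1} \cdot v_\rho]$ to a random variable $Z$ with law the unique $\rho(\mu)^t$-invariant probability measure on $P(V)$, and of $[K'' \cdot v_\rho]$ to a random variable $T$ with law the unique $\rho(\mu)$-invariant probability measure. Crucially, $[(U')^{-1} v_\rho]$ is measurable with respect to $S'$ and $[K'' v_\rho]$ with respect to $S''$; by independence of the two halves, the pair $([(U')^{-1} v_\rho], [K'' v_\rho])$ has product distribution, and the limits $(Z,T)$ may be realized as independent with the prescribed marginals.

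The conclusion for an $\epsilon$-Hölder test function $\phi$ then follows by two triangle-inequality steps: first, $|\E \phi([U_n^{-1} v_\rho], [K_n v_\rho]) - \E \phi([(U')^{-1} v_\rho], [K'' v_\rho])|$ is bounded by $\|\phi\|_\epsilon$ times the two approximation estimates; second, $|\E \phi([(U')^{-1} v_\rho], [K'' v_\rho]) - \E\phi(Z,T)|$ factorizes over the two independent coordinates and is bounded by $\|\phi\|_\epsilon$ times the exponential convergence rates given by Theorem \ref{ch2conv}. I expect the main obstacle to be the non-degeneracy control, namely a uniform lower bound on $|\langle U''(S' w_0), v_\rho^*\rangle|$ outside an exponentially small event; this prevents a purely soft argument and requires the quantitative regularity of stationary measures supplied by Lemma \ref{ch2hyperplane}, together with a careful choice of probe vector $w_0$.
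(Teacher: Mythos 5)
This theorem is stated in the paper by citation to \cite[Theorem 4.35]{aoun}; the paper itself does not supply a proof, so there is no internal argument to compare against. Judged on its own terms, your midpoint-splitting strategy is sound and is the standard route to such asymptotic-independence statements: writing $S_n = S''S'$, showing $[K_n\cdot v_\rho]$ is exponentially close to $[K''\cdot v_\rho]$ and $[U_n^{-1}\cdot v_\rho]$ to $[(U')^{-1}\cdot v_\rho]$ via the projective estimate (which you state correctly, since $\delta$ is $K$-invariant and the numerator in $\delta([AUw],[v_\rho])$ only involves the sub-dominant weight components), exploiting independence of the two halves for the product structure, and then passing to the stationary laws. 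The ingredients you invoke --- Theorem \ref{ch2ratioA} for the weight ratios and the regularity of the stationary measure for the non-degeneracy of the probe pairing --- are exactly the right tools, and both were set up earlier in the section for this purpose.

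Two points deserve sharpening. First, the non-degeneracy control really rests on Theorem \ref{ch2hausdorff} (the H\"older regularity $\p(|\langle Z, x/\|x\|\rangle|\le\epsilon)\le C\epsilon^\alpha$ together with the integrability of $|\langle\cdot,\cdot\rangle|^{-\alpha}$ against $\nu$), transferred to finite $n$ via the exponential convergence of Theorem \ref{ch2conv}; Lemma \ref{ch2hyperplane} as stated only addresses the exact event $S_n[x]\in H$ and is itself a consequence of the same regularity, so citing it is somewhat circular. For the $\langle U''(S'w_0), v_\rho^*\rangle$ factor you also need to condition on $S'$ and use that the regularity bound is uniform over hyperplanes. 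Second, a minor imprecision: $[K''\cdot v_\rho]$ does not converge a.s.\ to $T$ as $n\to\infty$ (the indices of the increments in $S''=X_n\cdots X_{m+1}$ shift with $n$); rather, $[K''\cdot v_\rho]$ has the same law as $[k(X_1\cdots X_{n-m})\cdot v_\rho]$, which does converge a.s., and this distributional identity is what licenses the final triangle-inequality step over the two independent coordinates. Since the theorem asserts only the existence of an independent pair $(Z,T)$ with the stated marginals, you are free to realize $(Z,T)$ as a fresh independent pair (on an enlarged space) --- note the a.s.\ limit $Z_0$ of $U_m^{-1}[v_\rho]$ is \emph{not} independent of $S''$, so one must not conflate it with the $Z$ of the statement. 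With these points made explicit, the argument goes through.
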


\section{Proof of the main theorems}
\label{ch2subproof1}

The proof of the main theorems we presented in the introduction is based on the following

\begin{theo} Let $\mathbf{G}$ be a semi-simple algebraic group defined over $\R$, $G$ its group of real points, let  $(\rho,V)$ be a rational real
representation of $\mathbf{G}$ such that its irreducible sub-representations $(\rho_1,V_1),\cdots,(\rho_r,V_r)$ are pairwise non isomorphic and let finally
 $A\in End(V_1)\oplus\cdots\oplus End(V_r)$ such that its projection on $End(V_1)$ is non zero. Consider a probability measure $\mu$ on $G$ with an exponential moment
 and such that $G_\mu:=\overline{\langle Supp(\mu)\rangle}$ is
  Zariski dense in $G$. Denote by $\{S_n; n\geq 0\}$ the corresponding random walk. Assume that :

    \begin{enumerate}
  \item $\rho_1$ is proximal.
  \item $L_{\rho_1(\mu)}> L_{\rho_i(\mu)}$ , $i=2,\cdots, r$  (see Definition \ref{ch2liapou}).
  \end{enumerate}

Then for every $\epsilon>0$ there exists $\rho(\epsilon)\in ]0,1[$ such that for all large $n$:
%$$\displaystyle \limsup_{n\rightarrow \infty}\big[\p \Big(\Big|tr\left(\rho(S_n) A \right) - \lambda \Big|>\epsilon \Big)\big]^{\frac{1}{n}} < 1$$
$$\p \Big(\big| \frac{1}{n} \log|Tr\left(\rho(S_n)A\right)| - L_{\rho_1(\mu)}\big| >\epsilon \Big) \leq \rho(\epsilon)^n$$
In particular, $Tr\left(\rho(S_n)A\right)$ vanishes only with  a probability decreasing exponentially fast to zero, and $\frac{1}{n} \log \Big|Tr\left(\rho(S_n) A\right)\Big|$
 converges a.s. towards $L_{\rho_1(\mu)}$.
\label{ch2theo1}\end{theo}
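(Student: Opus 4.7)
The plan is to isolate, inside $Tr(\rho(S_n) A)$, the contribution of the first (dominant) irreducible factor $\rho_1$ and, within that factor, the contribution of the highest weight space. Decomposing $A = A_1 + \cdots + A_r$ with $A_i \in End(V_i)$ and $A_1 \neq 0$, one has $Tr(\rho(S_n) A) = \sum_i Tr(\rho_i(S_n) A_i)$. For $i \geq 2$, the crude bound $|Tr(\rho_i(S_n) A_i)| \leq \dim(V_i)\,\|A_i\|\,\|\rho_i(S_n)\|$ combined with Le Page's theorem (Theorem~\ref{ch2page}) applied to $\rho_i(\mu)$ --- strongly irreducible because $G_\mu$ is Zariski dense in the connected group $\mathbf{G}$ and $\rho_i$ is irreducible, with exponential moment by Lemma~\ref{ch2exponential} --- shows this term is exponentially smaller than $e^{n L_{\rho_1(\mu)}}$ thanks to hypothesis~2.

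For the $i=1$ contribution I would use the Cartan decomposition $S_n = K_n A_n U_n$ and the orthogonal weight decomposition of $V_1$ in the good Mostow basis. Proximality of $\rho_1$ makes the highest weight space a line $\R v_\rho$ (unit vector), so
\begin{equation*}
Tr(\rho_1(S_n) A_1) \;=\; \chi_{\rho_1}(A_n)\, F_n \;+\; R_n, \qquad F_n := \langle A_1 \rho_1(K_n) v_\rho,\; \rho_1(U_n)^{-1} v_\rho\rangle,
\end{equation*}
with $R_n$ a sum over non-dominant weights bounded by $\chi(A_n)\,\dim V_{1,\chi}\,\|A_1\|$. The factor $\chi_{\rho_1}(A_n) = \|\rho_1(S_n)\|$ concentrates exponentially around $e^{n L_{\rho_1(\mu)}}$ by Le Page again, and $R_n$ is absorbed into a relative error of size $e^{-\epsilon n}\chi_{\rho_1}(A_n)$ by Theorem~\ref{ch2ratioA} combined with Markov's inequality. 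Everything thus reduces to showing that $|F_n|$ admits an essentially polynomial lower bound with exponentially high probability.

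This last step is the main obstacle. I would introduce the Lipschitz function $\phi([x],[y]) = |\langle A_1 x, y\rangle|$ on $P(V_1) \times P(V_1)$ and the cutoff $g_\epsilon = \max(0,\, 1 - \phi/(2\epsilon))$, which satisfies $\mathbf{1}_{\phi<\epsilon} \leq 2 g_\epsilon$ and whose $\alpha$-H\"older norm is $O(\epsilon^{-\alpha})$. The asymptotic independence statement (Theorem~\ref{ch2independence}) applied to $\rho_1$ gives
\begin{equation*}
\p\bigl(|F_n| < \epsilon\bigr) \;\leq\; 2\,\E\bigl[g_\epsilon(Z, T)\bigr] \;+\; C\,\epsilon^{-\alpha}\rho^n,
\end{equation*}
where $Z, T$ are independent with laws the unique $\rho_1(\mu)^t$- and $\rho_1(\mu)$-invariant probability measures on $P(V_1)$. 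Conditioning on $T$, the event $|F_n|<\epsilon$ becomes a tube of width $2\epsilon/\|A_1 T\|$ around a projective hyperplane; splitting according to whether $\|A_1 T\| \gtrless \sqrt{\epsilon}$ and invoking the H\"older regularity of the invariant measures on projective space (as used in the proof of Lemma~\ref{ch2hyperplane}, from Bougerol--La Croix) yields $\p(|F_n|<\epsilon) \leq C\,\epsilon^{\beta/2} + C\,\epsilon^{-\alpha}\rho^n$. Choosing $\epsilon = e^{-\tau n}$ with $\tau$ small enough that $\tau \alpha < \log(\rho^{-1})$ gives the desired exponential decay.

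Combining everything, on an event of exponentially high probability one has $|Tr(\rho_1(S_n)A_1)| \geq \tfrac{1}{2}\,e^{n(L_{\rho_1(\mu)}-\epsilon)}$, which dominates the $i\geq 2$ contributions by hypothesis~2, hence $\tfrac{1}{n}\log|Tr(\rho(S_n)A)|$ lies within $3\epsilon$ of $L_{\rho_1(\mu)}$. The delicate point throughout is the balance in step three: the H\"older norm of $g_\epsilon$ blows up like $\epsilon^{-\alpha}$, so $\epsilon$ cannot be taken exponentially small while preserving the $\rho^n$ rate --- but fortunately a merely polynomial (here, sub-exponential) lower bound on $|F_n|$ is enough, since it only perturbs the Lyapunov exponent $L_{\rho_1(\mu)}$ by $o(1)$.
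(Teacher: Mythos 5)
Your proposal follows essentially the same route as the paper: Cartan decomposition plus Mostow's orthogonal weight basis to isolate the highest-weight matrix coefficient $F_n = \langle A_1\rho_1(K_n)v_\rho, \rho_1(U_n)^{-1}v_\rho\rangle$ of the dominant factor, Le Page's large deviations and Theorem~\ref{ch2ratioA} to control the remaining terms, and then the asymptotic independence of the $K$-components (Theorem~\ref{ch2independence}) combined with H\"older regularity of the stationary measure (Theorem~\ref{ch2hausdorff}) to get an exponential anti-concentration bound for $F_n$. The paper packages this last step as a standalone statement (Proposition~\ref{ch2propp}) and works inside the single representation $\rho$ via ratios $\chi(A_n)/\chi_{\rho_1}(A_n)$ rather than splitting off the $i\geq 2$ blocks first, but these are cosmetic differences and your argument is sound.
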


%Let $\mu$ be a probability measure on $G$. We recall the definition of $\lambda_\alpha$ for $\alpha\in \Pi$; it is the
%Lyapunov exponent relative to $\rho_\alpha(\mu)$, where $\rho_\alpha$ is the representation given in Proposition \ref{tits}.
%\begin{lemme} If $(\rho,V)$ is an irreducible representation of $G$ then the Lyapunov exponent relative to $\rho(\mu)$ is an rational
 %combination of  the $\{\lambda_\alpha; \alpha \in \Pi\}$ \label{facile}\end{lemme}

%\begin{proof}[Proof] $\rho(\mu)=\lim \frac{1}{n} \log \chi_\rho(A_n)$ where $\chi_\rho$ is the highest weight of $V$ and $A_n$ is the $A$-component of
%the random walk $S_n$ in the Cartan decomposition. It suffices to express $\chi_\rho$ in terms of the fundamental weights:
% $\chi_\rho=\prod_{\alpha\in \Pi}{w_\alpha^{n_\alpha}}$ with $n_\alpha\in \N^*$ and to recall that the highest weight of $\rho_\alpha$ is a power
% of $w_\alpha$. \end{proof}

Assumption 1 in Theorem \ref{ch2theo1} is fulfilled whenever $\mathbf{G}$ is $\R$-split (see Lemma \ref{ch2split}). We provide two sufficient conditions for assumption 2 to hold: a probabilistic one and a determinist (algebraic) one.

\begin{remarque}[A probabilistic sufficient conditions for assumption $2$]

 Lemma \ref{ch2lyapunovcone} proves  that assumption 2 is fulfilled whenever the Lyapunov vector $Liap(\mu)$ does not belong to a finite union of hyperplanes in the Weyl chamber $\mathfrak{a}^+$ . \end{remarque}

%\begin{remarque}[A sufficient condition for $1$] If $\mathbf{G}$ is split, then by Lemma \ref{split} condition $1$ is true for every rational representation
%of $G$. \label{i}\end{remarque}

\begin{remarque}[An algebraic sufficient conditions for assumption $2$]

Let $\chi_i$ be the highest weight of $V_i$, $i=1,\cdots,r$.
A necessary  condition for $2$ to hold is
that $\chi_1/\chi_i=\prod_{\alpha \in\Pi}{\alpha^{n_\alpha}}$ for some non negative integers $\{n_\alpha; \alpha\in \Pi\}$ with at least one non zero $n_\alpha$.
This is easily checked using the fact that the Lyapunov vector is in the interior of the weyl chamber (Theorem \ref{ch2guimo}).\\
See the applications of this remark in the proof of Theorem \ref{ch2tr2}
 %and Proposition \ref{tr'}.\\
 \label{ch2nec}\end{remarque}

\begin{proof}[Proof]
Without loss of generality, we can assume $r=2$. Let $d=dim(V)$, $p=dim(V_1) $, $B_1=(v_1,\cdots,v_p)$ (resp. $B_2=(v_{p+1},\cdots, v_d)$)  a basis of $V_1$ (resp. $V_2$) consisting of weight vectors. We impose $v_1$ to be a highest weight.
This gives a basis $B=(B_1,B_2)$ of $V$. The scalar products on $V_1$ and $V_2$ given by Theorem \ref{ch2mostow}
induce naturally a scalar product on $V$ for which $V_1$ and $V_2$
are orthogonal.
 In the basis $B$,
   $\rho(A_n)=diag(\rho_1(A_n),\rho_2(A_n))=diag(a_1(n),\cdots,a_d(n))$ with $a_1(n)=\chi_{\rho_1}(A_n)$ and $a_{p+1}(n)=\chi_{\rho_2}(A_n)$ (notations of Section \ref{ch2subprel}).
Let $W_{\rho_i}$ be the set of non zero weights of $(V_i,\rho_i)$, $i=1,2$.   A simple computation gives:

\begin{eqnarray}Tr(\rho(S_n)A) &=& Tr(\rho(K_n)\rho(A_n)\rho(U_n)A) = Tr(\rho(A_n)\rho(U_n)A \rho(K_n))\nonumber\\
&=&\sum_{i=1}^d {a_i(n) \langle \rho(K_n) v_i,A^t\rho(U_n)^{t}v_i\rangle}\nonumber\end{eqnarray}
where $S_n=K_nA_nU_n$ is the Cartan decomposition of $S_n$ (see Section \ref{ch2subcartan}).
Since $\rho_1$ is proximal, $a_2(n)=\chi(A_n)$
for some weight $\chi\in W_{\rho_1}$ distinct from $\chi_\rho$. Then,
$$Tr(\rho(S_n)A)=  \chi_{\rho_1}(A_n) \Big[ \langle K_n\cdot v_{\rho_1},A^tU_n^{-1}\cdot v_{\rho_1}\rangle + \sum_{\chi\neq \chi_{\rho_1}
\in W_{\rho_1}}\;{O\left(\frac{\chi(A_n)}{\chi_{\rho_1}(A_n)} \right)}+ \sum_{\chi \in {W_{\rho_2}}}\;
{O\left(\frac{\chi(A_n)}{\chi_{\rho_1}(A_n)} \right)}\Big]$$

Le Page large deviations theorem (Theorem \ref{ch2page}) shows that for every $\epsilon>0$ and some $\rho\in ]0,1[$:
$$\p \left(exp(nL_{\rho_1(\mu)}-n\epsilon)\leq\chi_{\rho_1}(A_n)\leq exp(nL_{\rho_1(\mu)}+n\epsilon)\right) \geq 1-\rho^n$$

Next we show that for every $\chi\neq \chi_{\rho_1} \in W_{\rho_1}$ and
$\chi\in W_{\rho_2}$ and every $\epsilon>0$:

$$\displaystyle \limsup_{n\rightarrow \infty}\big[\E \left(\frac{\chi(A_n)}{\chi_\rho(A_n)} \right)^\epsilon \big]^{\frac{1}{n}}< 1$$
Indeed, for $\chi\neq \chi_{\rho_1} \in W_{\rho_1}$, this follows from Theorem \ref{ch2ratioA} and the fact that $\rho_1$ is proximal. For $\chi \in W_{\rho_2}$, this follows also from Theorem \ref{ch2ratioA} and assumption $2$.

Hence, by the Markov property,  there exist $\epsilon_1,\epsilon_2\in ]0,1[$ such that for all $n$ large enough: $\p \left(\frac{\chi(A_n)}{\chi_\rho(A_n)} \geq \epsilon_1^n \right) \leq \epsilon_2^n $.
 The following proposition applied to the (non trivial)
projection of $A$ on $V_1$ and to the representation $(\rho_1,V_1)$ ends the proof.
\end{proof}

\begin{prop}
Let $\mathbf{G}$ be a semi-simple algebraic group defined over $\R$, $G$ its group of real points, $\Gamma$ a Zariski dense subgroup of $G$, $(\rho,V)$ an irreducible rational real representation of $\mathbf{G}$,
 $\mu$ a probability measure with an exponential moment and whose support generates $\Gamma$.
 If $\rho$ is proximal, then for any non zero endomorphism $A\in End(V)$:
$$\displaystyle \limsup_{n\rightarrow \infty}\big[\p \left(|\langle K_n\cdot v_{\rho},AU_n^{-1}\cdot v_{\rho}\rangle|\leq t^n \right) \big]^{\frac{1}{n}} < 1$$
where $v_{\rho}$ is a highest weight vector.
\label{ch2propp}\end{prop}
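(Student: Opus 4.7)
Write $\phi([x],[y]) := |\langle x, Ay\rangle|/(\|x\|\,\|y\|)$, a Lipschitz function on $P(V)\times P(V)$ (constant of order $\|A\|$). Since $K_n,U_n\in K$ act isometrically on $V$ (Mostow), the quantity $|\langle K_n v_\rho, A U_n^{-1} v_\rho\rangle|$ equals $\|v_\rho\|^2\,\phi_n$, where $\phi_n := \phi([K_n v_\rho],[U_n^{-1}v_\rho])$. My plan is to apply Theorem \ref{ch2independence} to replace the pair $([K_nv_\rho],[U_n^{-1}v_\rho])$ asymptotically by an independent pair $(T,Z)$ of laws $\nu',\nu$, where $\nu$ (resp.\ $\nu'$) is the unique $\rho(\mu)^t$- (resp.\ $\rho(\mu)$-)invariant probability on $P(V)$. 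Both $\rho(\mu)$ and its transpose are strongly irreducible (since $\rho$ is irreducible and $\rho(\Gamma)$ is Zariski dense in the connected group $\rho(\mathbf G)$) and proximal (by assumption on $\rho$ and Theorem \ref{ch2margulis}), so these measures exist, are unique, and enjoy the standard H\"older regularity near every hyperplane: $\sup_H \nu(\{[x] : \delta([x],H)\leq \eta\})\leq C\eta^\alpha$ for some $\alpha>0$ (cf.\ the proof of Lemma \ref{ch2hyperplane} and \cite[Chap.~VI]{bougerol}), and similarly for $\nu'$.

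The core estimate on the limit is $\p(\phi(T,Z)\leq s)\leq C''s^\gamma$ for some $\gamma>0$ and all small $s>0$. I split the integration over $Z$ according to whether $\|AZ\|/\|Z\|<\eta$ or $\geq\eta$: on the first event $[Z]$ lies in an $(\eta/\sigma)$-neighborhood of $P(\ker A)$ (with $\sigma>0$ the smallest positive singular value of $A$), and since $P(\ker A)$ is contained in a hyperplane ($A\neq 0$), this event has $\nu$-mass $\leq C_0\eta^\alpha$. On the complementary event, with $Z$ fixed, $\{[x]:\phi([x],Z)\leq s\}$ is an $(s/\eta)$-neighborhood of the hyperplane $[AZ]^\perp$, of $\nu'$-mass $\leq C_1(s/\eta)^\beta$; optimizing $\eta = s^{\beta/(\alpha+\beta)}$ yields the claim with $\gamma=\alpha\beta/(\alpha+\beta)$.

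To transfer this to finite $n$, approximate $\mathbf 1_{[0,s]}$ by a Lipschitz $g_s\colon[0,\infty)\to[0,1]$ with $\mathbf 1_{[0,s]}\leq g_s\leq \mathbf 1_{[0,2s]}$ and Lipschitz constant $\leq C/s$; the composition $\Phi_s := g_s\circ\phi$ is then Lipschitz on $P(V)\times P(V)$ with $\|\Phi_s\|_1\leq C'/s$, so Theorem \ref{ch2independence} (with H\"older exponent $\epsilon=1$) gives
\[
\p(\phi_n\leq s)\leq \E(\Phi_s([K_nv_\rho],[U_n^{-1}v_\rho]))\leq \E(\Phi_s(T,Z))+\|\Phi_s\|_1\,\rho(1)^n\leq C_2 s^\gamma + (C'/s)\rho(1)^n.
\]
Choosing $s=t^n$ with $t\in(\rho(1),1)$ makes both terms exponentially small, and the statement for arbitrary $t\in(0,1)$ follows from the trivial monotonicity $\{\phi_n\leq t^n\}\subseteq\{\phi_n\leq t'^n\}$ for $t\leq t'$. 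The main obstacle is exactly this balance in the last display: approximating a sharp cutoff at scale $t^n$ inflates the test function's H\"older norm by $t^{-n}$, which must be absorbed both by the decay rate $\rho(1)^n$ of asymptotic independence and by the positive H\"older exponent $\gamma$ of the limiting projective measures --- both of which are special to the strongly irreducible proximal setting over $\R$.
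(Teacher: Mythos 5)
Your proof is correct and follows essentially the same route as the paper's: reduce to the limiting independent pair $(T,Z)$ via Theorem \ref{ch2independence}, using a Lipschitz cutoff whose constant grows like $t^{-n}$ against the exponentially small error $\rho^n$, and then bound the limit probability by the H\"older regularity of the stationary measures (Theorem \ref{ch2hausdorff}). The only cosmetic difference is in controlling the event where $AZ$ is nearly zero: you invoke the geometry of $\ker A$ and the smallest positive singular value, whereas the paper picks a fixed $v_0$ with $A^t v_0\neq 0$, bounds $\|AT\|\geq|\langle T,A^t v_0\rangle|$, and applies the same regularity estimate to the fixed direction $A^t v_0$.
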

Before giving the proof, we recall the following remarkable theorem of Guivarc'h:
\begin{theo}\label{ch2hausdorff}\cite{Guivarch3} Let $\mu$ be a probability measure on $GL_d(\R)$ having an exponential moment
and such that $G_\mu$ is strongly irreducible and proximal. Denote by $\nu$ the unique $\mu$-invariant probability measure
on the projective space $P(\R^d)$ . Then there exists $\alpha>0$ (small enough) such that:
$$Sup\{\int{\frac{1}{|\langle\frac{x}{||x||},\frac{y}{||y||}\rangle|^\alpha}d\nu([x])\;\;;\;\;y\in \R^d \setminus\{0\}}\}< \infty $$
In particular, if $Z$ is a random variable with law $\nu$,  there exists a constant $C>0$ such that:
$$Sup\{\p (|\langle Z,\frac{x}{||x||}\rangle|\leq \epsilon );\;\; x\in \R^d\setminus\{0\}\}\leq C\epsilon^\alpha$$\end{theo}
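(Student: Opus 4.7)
The two conclusions are equivalent by the layer-cake formula
$$\int|\langle\bar{x},\bar{y}\rangle|^{-\alpha'}\,d\nu([x])=\alpha'\int_{0}^{\infty}\epsilon^{-\alpha'-1}\,\nu\bigl\{[x]\in P(\R^{d}):|\langle\bar{x},\bar{y}\rangle|\leq\epsilon\bigr\}\,d\epsilon,$$
so the plan reduces to establishing the uniform tail bound $\sup_{\bar{y}}\nu\{|\langle\bar{x},\bar{y}\rangle|\leq\epsilon\}\leq C\epsilon^{\alpha}$ for some small $\alpha>0$; the first assertion then follows for every $\alpha'<\alpha$. Geometrically the bound says that $\nu$ assigns mass $O(\epsilon^{\alpha})$ to the $\epsilon$-neighborhood of every projective hyperplane, uniformly in the hyperplane direction.

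The analytic backbone is the quasi-compactness of the Markov operator $Pf([x])=\int f(g\cdot[x])\,d\mu(g)$ acting on the Hölder space $C^{\beta}(P(\R^{d}))$, available under strong irreducibility, proximality, and an exponential moment (Le Page; this is the refined version of the convergence results in Section \ref{ch2subproba}): $1$ is a simple isolated eigenvalue with spectral projector $f\mapsto\int f\,d\nu$, and the rest of the spectrum is contained in a disk of radius strictly less than $1$. I would embed $P=P_{0}$ in the analytic family
$$P_{s}f([x])=\int\Bigl(\tfrac{\|gx\|}{\|x\|}\Bigr)^{-s}f(g\cdot[x])\,d\mu(g),$$
whose leading eigenvalue $k(s)$ remains simple for small $|s|$, is analytic near $0$, and satisfies $k(0)=1$ and $k'(0)=-L_{\mu}<0$ by Furstenberg's formula (positivity of $L_{\mu}$ is Theorem \ref{ch2guimo} applied in the irreducible case). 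In particular $k(s_{0})<1$ for some small $s_{0}>0$. The geometric bridge to the theorem is the cocycle identity
$$|\langle\overline{g\bar{x}},\bar{y}\rangle|=\frac{\|g^{t}\bar{y}\|}{\|g\bar{x}\|}\,|\langle\bar{x},\overline{g^{t}\bar{y}}\rangle|,$$
which converts proximity of $g\cdot[\bar{x}]$ to the hyperplane $\bar{y}^{\perp}$ into proximity of $[\bar{x}]$ to the rotated hyperplane $(g^{t}\bar{y})^{\perp}$, modulo explicit cocycle factors.

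To execute the estimate I would fix a smooth bump $\varphi\colon[0,\infty)\to[0,1]$ with $\varphi\equiv 1$ on $[0,1]$ and $\varphi\equiv 0$ on $[2,\infty)$, set $f_{\epsilon,y}([x])=\varphi(|\langle\bar{x},\bar{y}\rangle|/\epsilon)$, and bound $\int f_{\epsilon,y}\,d\nu$ from above. Using invariance $\int f_{\epsilon,y}\,d\nu=\E\bigl[\int f_{\epsilon,y}(S_{n}\cdot[x])\,d\nu([x])\bigr]$, inserting the cocycle identity inside, and controlling the ratio $\|S_{n}\bar{x}\|/\|S_{n}^{t}\bar{y}\|$ by Theorem \ref{ch2page} applied both to $\mu$ and to its image under $g\mapsto g^{t}$ (both satisfy the hypotheses, the transpose case by Goldsheid-Margulis, Theorem \ref{ch2margulis}), one reduces to estimating the action of $P_{s}^{n}$ on a Hölder test function built from $\varphi$. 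Choosing $n\sim\log(1/\epsilon)/L_{\mu}$ balances the cocycle factor $\epsilon^{-s}e^{nsL_{\mu}}$ against the spectral contraction $k(s)^{n}$ and yields $\nu(\{|\langle\bar{x},\bar{y}\rangle|\leq\epsilon\})\leq C\epsilon^{\alpha}$ with $\alpha$ explicit in terms of $s_{0}$ and $L_{\mu}$.

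The principal obstacle is a circularity: naive iteration of the cocycle identity expresses the quantity for $\bar{y}$ in terms of the same quantity at the random rotated direction $\overline{g^{t}\bar{y}}$, producing an inequality of the form $F_{\alpha}\leq C\cdot F_{\alpha}$ that does not close. The strict inequality $k(s_{0})<1$ supplied by the spectral gap is exactly what breaks the circularity, by forcing the iterated constant to be genuinely contractive. An alternative (Guivarc'h-Raugi style) route would first establish the qualitative statement $\nu(H)=0$ for every hyperplane $H$ directly from strong irreducibility and a Furstenberg argument, then bootstrap $\nu(B_{\epsilon}(y))\to 0$ to the Hölder rate using the same spectral gap input; either way, the quasi-compactness of $P_{s}$ is the indispensable nonperturbative ingredient.
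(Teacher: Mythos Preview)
The paper does not prove this statement; it is quoted from \cite{Guivarch3} and used as a black box in the proof of Proposition~\ref{ch2propp}. Your sketch, however, has a genuine gap. The claim $k'(0)=-L_\mu<0$ is unjustified: the hypotheses and the conclusion (which concerns only $\nu$ on $P(\R^d)$) are invariant under $g\mapsto cg$, while $L_\mu$ shifts by $\log c$, so $L_\mu$ can have either sign; Theorem~\ref{ch2guimo} asserts that the Lyapunov vector of a walk on a semi-simple group lies in the open Weyl chamber (equivalently $\lambda_1>\lambda_2>\cdots$), not that $\lambda_1>0$ for a measure on $GL_d(\R)$. More seriously, even after normalizing to $SL_d$ so that $L_\mu>0$, your $P_s$ is not the operator that governs the problem. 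After the cocycle identity the weight is $(\|gx\|/\|g^t y\|)^{s}$, and by Le Page \emph{both} $\tfrac1n\log\|S_n x\|$ and $\tfrac1n\log\|S_n^t y\|$ concentrate at the same value $\lambda_1$; the ratio therefore carries no exponential decay, no reduction to iterates of $P_s$ materializes, and I cannot locate a source for the factor $\epsilon^{-s}e^{nsL_\mu}$ in your computation. The recursion one actually obtains is of the shape $M(\epsilon)\le e^{-cn}+M(\epsilon e^{n\delta})$, which (even combined with the properness of $\nu$) does not yield a power bound $M(\epsilon)\le C\epsilon^\alpha$.

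What drives Guivarc'h's argument is not $\lambda_1$ but the gap $\lambda_1-\lambda_2>0$ (simplicity of the top exponent, Guivarc'h--Raugi under strong irreducibility and proximality), equivalently the contraction $\E\big[\delta(S_n[x],S_n[x'])^\alpha\big]\le\rho^n\delta([x],[x'])^\alpha$ on $P(\R^d)$. This enters through the second singular value via $a_2(g)/a_1(g)$, not through $\|g\|$ alone; the correct twisted operator is built from a cocycle comparing $\|g^t y\|$ to $\|g\|$ (a function of $g$ and $y$, not of $x$), and its subunit spectral radius for small $\alpha$---which encodes precisely $\lambda_1-\lambda_2$---is what breaks the circularity you correctly identify.
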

\begin{proof}[Proof of Proposition \ref{ch2propp}]
\begin{itemize}
  \item  Let $\eta$ the function defined on $P(V)\times P(V) \rightarrow \R$
by $\eta([x],[y]) =|\langle x,Ay\rangle|$ where $x$ and $y$ are two representative of $[x]$ and $[y]$
in the sphere of radius one. The function $\eta$ is lipshitz with lipshitz constant $\leq Max\{1,||A||\}$.
  \item For every $a>0$, let
 $\psi_a$ be  the  function defined on $\R$ by $\psi_a(x)= 1$ if $x\in[-a;a]$;
 affine on $[-2a;-a[ \cup ]a,2a]$ and zero otherwise.
One can easily verify that $\psi_a$ is lipshitz with constant equal to $\frac{1}{a}$.\\
Note also that \begin{equation}\mathds{1}_{[-a,a]} \leq \psi_a \leq \mathds{1}_{[-2a,2a]}\label{ch2yy}\end{equation}
\end{itemize}
Define for $a>0$, $\phi_a=\psi_a \circ \eta$.
 By the previous remarks,   $\phi_a$ is lipshitz  with lipshitz constant: $||\phi_a|| \leq \frac{Max\{1,||A||\}}{a}$.\\
%Theorem \ref{ind} applied for $\epsilon=1$ gives $\rho \in ]0,1[$, independent random variables $Z,T\in P( V)$  such that for every lipshitz  function $\phi$ on $P(V)\times P(V^*)$, and $n$ large enough
%\begin{equation}\big|\E \left(\phi([K_n.v_\rho],[U_n^{*}.v_\rho])\right) - \E \left( \phi(Z,T)\right) \big|\leq ||\phi||\;\rho^n \label{imp} \end{equation}
By Theorem \ref{ch2independence} there exist independent random variables $Z$ and $T$ in $P(V)$ such that for any $t\in ]0,1[$, we have:
\begin{eqnarray}\p (|\langle K_n\cdot v_\rho,AU_n^{-1}\cdot v_\rho\rangle|\leq t^n ) & \leq & \E \left(\phi_{t^n} ([K_n\cdot v_\rho],[{U_n}^{-1}\cdot v_\rho]) \right) \label{ch2awwalmarra}\\
& \leq & \E \left( \phi_{t^n} (Z,T) \right) + ||\phi_{t^n}|| \rho^n \\
& \leq & \p (|\langle Z,AT\rangle| \leq 2t^n ) + Max\{1,||A||\}\frac{\rho^n}{t^n} \label{ch2explique1}
 \end{eqnarray}

 In the last line, we confused between $Z$ and $T$ in $P(V)$ and some representative in the unit sphere. The bounds (\ref{ch2awwalmarra}) and (\ref{ch2explique1}) follow from (\ref{ch2yy}).\\
To prove our proposition, we can clearly suppose $t\in ]\rho,1[$. It suffices then to show that
$\p (|\langle Z,AT\rangle| \leq 2t^n )$  is sub-exponential.
The law of $T$ is the unique $\rho(\mu)^t$-invariant probability measure $\nu$  on $P(V)$ (Theorem \ref{ch2independence}). Moreover, a general lemma of Furstenberg (see for example \cite[Proposition 2.3 page 49]{bougerol}) shows that $\nu$ is proper. Hence, a.s. $AT\neq 0$. Moreover, we claim that following  the stronger statement holds
: there exist
 $C,\alpha>0$ such
that for every $t'\in ]0,1[$ and $n\in \N^*$:

 \begin{equation}\label{ch2tesada2}\p (||AT||\leq t'^n)\leq C t'^{n\alpha}\end{equation}
 Indeed, $A$ being a non zero endomorphism, there exist a non zero vector of norm one, $v_0$ such that $A^tv_0\neq 0$. Then by Theorem \ref{ch2hausdorff},
$$\p (||AT||\leq t'^n ) \;\leq\; \p(|\langle AT,v_0\rangle|\leq t'^n)\;\leq\; \p(|\langle T,A^tv_0\rangle|\leq t'^n)\;\leq\frac{C}{||A^tv_0||^\alpha} t'^{n\alpha}$$
Hence for every $t'\in ]t,1[$,
\begin{eqnarray}\p (|\langle Z,AT\rangle| \leq 2t^n ) &=& \p (|\langle Z,\frac{AT}{||AT||}\rangle| \leq 2\frac{t^n}{||AT||} ) \nonumber\\
&\leq& \p \left(|\langle Z,\frac{AT}{||AT||}\rangle| \leq 2 (t/t')^n \right)+\frac{C}{||A^tv_0||^\alpha}{t'}^{n\alpha} \nonumber\\
&\leq& Sup\{\p \left(\delta(Z,[H])\leq 2 (t/t')^n \right);\textrm{\;\;$H$ hyperplane of $V$}\} + C{t'}^
{n\alpha}\nonumber\end{eqnarray}
The last line is by independence of $Z$ and $T$. Theorem \ref{ch2hausdorff} shows that it decreases exponentially fast to zero.
\end{proof}

As an application, we give the

\begin{proof}[Proof of Theorem \ref{ch2tr1}]
Lemma \ref{ch2lemma} allows us to be in the situation of Theorem \ref{ch2theo1}, i.e., we have
 a representation $(\rho,V)$ whose irreducible sub-representations $\rho_1,\cdots, \rho_r$ are pairwise non isomorphic, a endomorphism
  $A\in End(V_1)\oplus \cdots \oplus End(V_r)$ whose restriction to each $End(V_i)$ non zero such that $\mathcal{V}=\{g\in G; Tr(gA)=0\}$.
Lemma \ref{ch2lyapunovcone} allows us to distinguish a representation, say $\rho_1$, whose Lyapunov exponent is the biggest. Lemma \ref{ch2split} shows that this representation is proximal. It suffices
to apply Theorem \ref{ch2theo1}.  \end{proof}

%\paragraph{Proof of Corollary \ref{subg}}
%Let $\mathbf{H}$ be a proper algebraic subgroup of $\mathbf{G}$ and $h\in H=\mathbf{H}(\R)$. We have:
%$$\p (S_n \in H)\leq \p (\langle S_n, h\rangle \subset H ) \leq \p (\langle S_n, h\rangle \textrm{\;is not Zariski dense )$$
%We recall the following
%\begin{lemme}\cite[Lemma 6.8]{strongtits} Let $\mathbf{G}$ be a semi-simple group, $k$ a field, $G=\mathbf{G}(k)$. Then there exists a proper
%algebraic variety $\mathcal{W}$ of $\mathbf{G} \times \mathbf{G}$ such that any pair of elements $x,y\in G$ generate a Zariski dense subgroup
%unless $(x,y)\in \mathcal{W}(k)$. \label{strongtits}\end{lemme}
%Let $\mathcal{W}$ be the algebraic subvariety given by the previous lemma and denote \mathcal{W}_h=\{g\in \mathbf{G}; (g,h)\in \mathcal{W

\begin{proof}[Proof of Theorem \ref{ch2tr2}]
%We will show that the ``linearization'' of $\mathcal{V}$ (Section \ref{sublinear}) allows us to apply  Remark \ref{nec}. For that,
%we recall the definition of the symmetric representations of $SL_d(\R)$.
 For every $k\in \N$, let
 $Sym^k(\R^d)$ be the vector space of homogenous polynomials on $d$ variables of degree $k$. The group $SL_d(\R)$ acts on $Sym^k(\R^d)$ by the formula:
$g.P(X_1,\cdots,X_d)=P\left(g^{-1}(X_1,\cdots,X_d) \right)$ for every $g\in SL_d(\R)$, $P\in Sym^k(\R^d)$.
%We impose $Sym^0(\R^d)$ to be the trivial representation of $SL_d(\R)$.
A known fact (see for example \cite{fulton}) is
that the action of $SL_d(\R)$ on $Sym^k(\R^d)$ is irreducible for every $k\in \N$. \\ Consider now a proper algebraic hypersurface $\widetilde{\mathcal{V}}$ of $\R^d$ defined over $\R$, a non zero vector $x$ of $\R^d$  and denote $\mathcal{V}=\{g\in SL_d(\R); gx\in \widetilde{\mathcal{V}}\}$. Let now $P$ be the polynomial that defines $\widetilde{\mathcal{V}}$, $k$ its degree. The polynomial $P$ can be seen as a vector in $V=\oplus_{i=0}^k {Sym^i(\R^d)}$. Let $\rho_i$ be the action of $SL_d(\R)$  on $Sym^i(\R^d)$.
If $P_i$ denotes
projection of $P$ on $Sym^i(\R^d)$, then
 ``$g x \in \mathcal{V} \;\Leftrightarrow\; P(gx)=0 \; \;\Leftrightarrow\; \sum_{i=0}^k {f_i(g^{-1})}=0$'' where
 $f_i(g)=\rho_i(g) (P_i) (x) \in C(\rho_i)$  (see Definition \ref{ch2matrix}). Moreover, the highest weight of $Sym^i(\R^d)$ is strictly bigger
 (for the natural order on $X(\mathbf{A})$ defined in Section \ref{ch2subcartan}) than the one of $Sym^{i-1}(\R^d)$, the ratio being the highest weight  of  the natural representation of $SL_d(\R)$ on $\R^d$. We can then apply Remark \ref{ch2nec} and Theorem \ref{ch2theo1} to the probability measure $\mu^{-1}$.
\end{proof}

An application of the results of Section \ref{ch2subproba} independent from Theorem \ref{ch2theo1} is the

\begin{proof}[Proof of Theorem \ref{ch2tr3}]
% A standard fact in the theory of
%algebraic groups is the following theorem (see for example \cite{Humphreys}).
%\begin{theo} Let $\mathbf{H}$ a proper maximal algebraic subgroup of $\mathbf{G}$ then either $\mathbf{H}$ is parabolic or its identity component is reductive.\end{theo}

 If the identity component $\mathbf{H}^0$ of $\mathbf{H}$
 is reductive, then by Proposition \ref{ch2reductive}, there exists a rational representation $(\rho,V)$ of $\mathbf{G}$ such that
 the reductive group $\mathbf{H}^0$ fixes a non zero vector $x$ of $V$. By decomposing $\rho$ into irreducible sub-representations, one can
  assume $(\rho,V)$ to be irreducible. If $h_1,\cdots,h_r$ denote  the cosets of the finite group $H/H^0$, then we can write
$$\p (S_n \in H) \leq \sum_{i=1}^r {\p (S_n h_i^{-1} \cdot x = x)} \leq \sum_{i=1}^r {\p \left(||\rho(S_n)\frac{ h_i^{-1} \cdot x}{||x||}|| = 1\right)}  $$
 Since $G$ has no compact factors, $\rho(G)$ is non compact. In particular, $\rho(G_\mu)$
  is not contained in a compact subgroup of $SL(V)$ because compact subgroups of $SL(V)$ are algebraic and $\rho(G_\mu)$ is Zariski dense in $\rho(G)$. Hence we can apply
  Furstenberg theorem (\cite{Furst}) which shows that  $L_{\rho(\mu)}>0$ (see Definition \ref{ch2liapou}).
  Applying Le Page large deviations theorem (Theorem \ref{ch2page}) shows  that for every $i=1,\cdots, r$, $\p \left(||S_n\cdot (h_i^{-1} \cdot x)|| \leq exp(nL_{\rho(\mu)}/2) \right)$ decreases exponentially fast to zero.

If $\mathbf{H}^0$ is not reductive, then it contains a unipotent Zariski connected $\R$-subgroup $\mathbf{U}$ which is normal in $\mathbf{H}^0$. Hence $\mathbf{H}^0\subset N(\mathbf{U})$, where $N(\mathbf{U})$ is the normalizer of $\mathbf{U}$ in $\mathbf{G}$. By \cite[Corollary 3.9]{elements}, there is an $\R$-parabolic subgroup $\mathbf{P}$ of $\mathbf{G}$ such that $N(\mathbf{U}) \subset \mathbf{P}$. By \cite[Proposition 5.14]{gpesred}, $\mathbf{P}$ is conjugated to one of the standard  parabolic subgroups
$\mathbf{P}_\theta$, $\theta \subset \Pi$
  described in  Section \ref{ch2subparabolic}. Hence, by Lemma \ref{ch2lemmepar}, $\mathbf{P}_\theta$ fixes the line generated by the highest weight $x_\alpha$ of
 $(\rho_\alpha,V_\alpha)$ for every $\alpha\not\in \theta$. Fix such $\alpha$. Hence, $$\mathbf{H}^0\subset\{g\in \mathbf{G}^0; g\cdot [x_\alpha]=[x_\alpha]\}$$
As in the previous paragraph, denote by $h_1,\cdots, h_r$ the cosets of the finite group $H/H^0$. Hence,

\begin{equation}\p (S_n \in H)\leq \sum_{i=1}^r{\p(\rho_\alpha(S_n)[h_i^{-1}x_\alpha]=[x_\alpha])}\label{ch2intassa}\end{equation}
The representation $\rho_\alpha$ is $G$-irreducible hence by connectedness, strongly
 irreducible. Moreover, it is proximal because $\Theta_{\rho_\alpha}=\{\alpha\}$, its highest weight space is a line and $G$ has no compact factors. By Golsheild-Margulis theorem (Theorem \ref{ch2margulis}), $\rho_\alpha(\Gamma)$ is proximal. Hence we can apply Lemma \ref{ch2hyperplane} which proves the exponential decay of the probability \ref{ch2intassa}.
 \end{proof}

\section{Application to generic Zariski density and to free subgroups of linear groups}
\label{ch2subzariski}
\subsection{Statement of the results and commentaries}

%Let $\mathbf{G}$ be a semi-simple $\R$-split algebraic group, $G$ its real points, $\mathfrak{a}$ the Weyl chamber,
%We recall the following notation: if $\mu$ is a probability measure on $G$, then $Liap(\mu)$ (Lyapunov vector) is a*
% vector in the interior of the Wayl
%chamber $\mathfrak{a}^+$ defined by the almost sure limit of $\frac{1}{n}m (S_n)$  where $\{S_n; n\geq 0\}$ is the corresponding random walk. \\

Let $\mathbf{G}$ be a semi-simple algebraic group defined over $\R$ and $G$ its group of real points.

\begin{question}
Let $\Gamma$ be a Zariski dense subgroup of $G$. Is it true that two ``random'' elements   in $\Gamma$
generate a Zariski dense subgroup of $G$. \end{question}
A motivation for this question is the following
\begin{question}
By the  Tits alternative \cite{tits}, any Zariski dense subgroup $\Gamma$ of $G$ contains a Zariski dense free subgroup on two generators. A natural question is to see if this property is generic.
In \cite[Theorem 1.1]{aoun}, we proved that  two ``random'' elements in $\Gamma$ generate a free subgroup.
The  question that arises immediately is to see if the latter subgroup is Zariski dense. \label{ch2question2}\end{question}

In recent works of Rivin \cite{genericrivin}, he showed the following:
\begin{theo} \cite[Corollary 2.11]{genericrivin} Let $\mathbf{G}=\mathbf{SL_d}$ and $\Gamma=SL_d(\Z)$ for some $d\geq 3$.
 Consider the uniform probability measure on a finite symmetric generating set and denote by
$\{S_n, n\geq 0\}$ the associated  random walk. Then, for any $g\in \Gamma$,
there exists a constant $c(g)\in ]0,1[$ such that
$$\p (\langle g,S_n\rangle\textrm{is  Zariski dense}) \geq 1-c(g)^n $$
Moreover, $c(g)$ is effective. \label{ch2genericrivin}\end{theo}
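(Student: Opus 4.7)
The plan is to derive Rivin's statement from Theorem~\ref{ch2tr3} together with an algebraic-geometric analysis of the maximal proper algebraic subgroups of $\mathbf{SL}_d$ that contain the fixed element $g$.

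First, I would reduce the failure of Zariski density to a union of algebraic containment events. The subgroup $\langle g, S_n\rangle$ is Zariski dense in $\mathbf{SL}_d(\C)$ if and only if it is not contained in any proper algebraic subgroup; equivalently, if and only if no \emph{maximal} proper algebraic subgroup of $\mathbf{SL}_d$ contains both $g$ and $S_n$. Since $g$ is fixed, this gives
$$\p\bigl(\langle g, S_n\rangle \text{ is not Zariski dense}\bigr) \;\leq\; \sum_{\mathbf{H}} \p(S_n \in H),$$
where the sum ranges over maximal proper algebraic subgroups $\mathbf{H}$ of $\mathbf{SL}_d$ with $g\in \mathbf{H}$. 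The problem then splits into controlling the size/complexity of this collection and bounding each summand.

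Second, I would control the family of such $\mathbf{H}$'s using the Dynkin/Aschbacher-type classification of maximal proper algebraic subgroups of $\mathbf{SL}_d(\C)$: these come in finitely many ``types'', each characterised by an invariant structure its members preserve (a nontrivial subspace, a tensor decomposition, a nondegenerate bilinear form, a subfield structure, \dots). The subgroups of a given type containing $g$ correspond to the invariant structures that $g$ itself preserves. For a \emph{generic} $g$ (e.g.\ regular semisimple), each such set of structures is finite --- the $g$-stable subspaces are spanned by subsets of the eigenlines, the $g$-invariant nondegenerate bilinear forms span a finite-dimensional and usually trivial space, etc. Hence only finitely many $\mathbf{H}$ contain $g$, and Theorem~\ref{ch2tr3} applied to each yields $\p(S_n \in H) \leq c_H^n$ for some $c_H < 1$. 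Taking $c(g) = \max_H c_H$ concludes the argument in this case.

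The main obstacle is the non-generic case (e.g.\ $g$ with repeated eigenvalues, unipotent, or even $g=I$), where the $g$-invariant structures can form positive-dimensional algebraic families, so that infinitely many $\mathbf{H}$ contain $g$. Since the constant in Theorem~\ref{ch2tr3} depends on $\mathbf{H}$ without uniformity --- a limitation explicitly noted in the paper just after the statement of Theorem~\ref{ch2tr3} --- one cannot conclude by directly summing the bounds. Two ways out come to mind: (a) invoke Varju's uniform exponential transience estimate~\cite{Varju}, which applies here because $\mathbf{SL}_d$ is simple and its hypotheses match, and which furnishes a bound uniform over $\mathbf{H}$; (b) follow Rivin's own route via strong approximation --- project $\Gamma = \mathbf{SL}_d(\Z)$ onto $\mathbf{SL}_d(\mathbb{F}_p)$ for a suitable prime $p$, observe that non-Zariski-denseness forces the image of $\langle g, S_n\rangle$ to miss the finite group $\mathbf{SL}_d(\mathbb{F}_p)$, and count ``bad'' configurations in this finite setting via a sieve. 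Route (b) is naturally effective, which explains the effectiveness of $c(g)$ in Rivin's statement; route (a) fits more cleanly into the probabilistic framework of this paper.
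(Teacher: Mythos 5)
This theorem is not proved in the paper at all: it is quoted verbatim from Rivin's paper \cite{genericrivin} (as the citation in the theorem statement indicates), and the surrounding text compares it to the paper's own Theorem~\ref{ch2generic1} rather than offering a proof. So there is no internal proof for your attempt to match; what you are proposing is an alternative, probabilistic derivation using the paper's Theorem~\ref{ch2tr3}, and the question is whether it actually closes.

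It does not, and you have put your finger on exactly why. The reduction to a union bound over maximal proper algebraic subgroups $\mathbf{H}\ni g$ is reasonable in outline, but the bound from Theorem~\ref{ch2tr3} is not uniform in $\mathbf{H}$ --- the paper says so explicitly just after stating that theorem --- and for non-generic $g$ (repeated eigenvalues, unipotent, or the extreme case $g=I$) the family of maximal $\mathbf{H}$ containing $g$ is a positive-dimensional, hence infinite, family. A union bound with $\mathbf{H}$-dependent constants over an infinite family gives nothing. You acknowledge this, but the two escape routes you sketch do not repair it within the paper's framework. Route (a) does not apply: Varju's result, as cited in this paper (\cite[Propositions 8 and 9]{Varju}), is an \emph{existence} statement --- ``there exists a symmetric probability measure $\mu$ on $\Gamma$'' with uniform exponential escape --- not a statement about the uniform measure on an arbitrary finite symmetric generating set of $SL_d(\Z)$, which is what Rivin's theorem concerns. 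Route (b), reduction mod $p$ via strong approximation followed by counting in $\mathbf{SL}_d(\mathbb{F}_p)$, is indeed Rivin's actual argument, and it is the route that produces the effective constant $c(g)$; but at that point you are no longer using the present paper's probabilistic machinery at all. In short: your sketch correctly diagnoses the obstruction, but neither of your proposed fixes turns it into a proof from Theorem~\ref{ch2tr3}, and the genuine proof is the sieve-theoretic one in \cite{genericrivin}, which is methodologically disjoint from this paper.

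One smaller point worth flagging: even in the ``generic $g$'' case, the claim that only finitely many maximal $\mathbf{H}$ contain $g$ needs more care than a passing appeal to Dynkin/Aschbacher types. For a regular semisimple $g$ the stabilized subspaces are indeed finite in number (spanned by eigenlines), but one must check each of the other structure types (forms, tensor decompositions, imprimitivity systems) separately, and for some of them the invariant structures of a regular semisimple element still form nontrivial families in low dimension. This is the kind of case analysis Rivin actually carries out; a one-line appeal to the classification does not discharge it.
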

Passing from the ``1.5 random subgroup'' in Theorem \ref{ch2genericrivin}
to the subgroup generated by two random elements is delicate since the constant $c(g)$ depends among others
things on the norm of $g$. \\

Using our Theorem \ref{ch2tr1}, we will prove the following

\begin{theo}\label{ch2generic1} Let $G$ be the group of real points of a semi-simple algebraic group defined and split over $\R$. Let $\Gamma_1,\Gamma_2$ be two Zariski dense subgroups of $G$.
 Then there exists probability measures $\mu_1$ and $\mu_2$ respectively
on $\Gamma_1$ and $\Gamma_2$ with an exponential moment such that for some $c\in ]0,1[$ and all large $n$,
 $$\p (\textrm{$\langle S_{1,n},S_{2,n}\rangle$ is  Zariski dense and free}) \geq 1-c^n$$ where  $\{S_{2,n}; n \geq 0\}$
and  $\{S_{2,n}, n \geq 0\}$ are two independent random walks on $\Gamma_1$ (resp. $\Gamma_2$) associated respectively to
$\mu_1$ and $\mu_2$. This implies that almost surely, for $n$ big enough, the subgroup
 $\langle S_{1,n},S_{2,n}\rangle$ is Zariski dense and free.
\end{theo}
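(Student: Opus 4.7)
The plan is to reduce the statement to an application of Theorem \ref{ch2tr1} on the product group $G \times G$, which is again the real points of a semi-simple split $\R$-algebraic group, exploiting that $\Gamma_1 \times \Gamma_2$ is Zariski dense in $G \times G$ and that $(S_{1,n}, S_{2,n})$ is a random walk on $\Gamma_1 \times \Gamma_2$ with step distribution $\mu_1 \otimes \mu_2$.

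The key algebraic input is the finiteness of the $G$-conjugacy classes of maximal proper closed algebraic subgroups of $\mathbf{G}$; denote representatives by $M_1, \ldots, M_k$. For each $i$, define $W_i \subset G \times G$ to be the Zariski closure of the image of the morphism $(g, m, m') \mapsto (gmg^{-1}, gm'g^{-1})$ from $G \times M_i \times M_i$ to $G \times G$. Each $W_i$ is a closed algebraic subvariety of $G \times G$ defined over $\R$, and it is \emph{proper}, because $G$ admits pairs of elements generating a Zariski dense subgroup, so not every pair can be simultaneously conjugated into $M_i$. Set $W^* := W_1 \cup \cdots \cup W_k$. If $\langle g_1, g_2\rangle$ is not Zariski dense in $G$, then its Zariski closure is a proper closed algebraic subgroup of $\mathbf{G}$, therefore contained in a maximal proper closed algebraic subgroup, and hence in some conjugate $gM_ig^{-1}$; this forces $(g_1, g_2) \in W_i \subset W^*$.

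Next, I would apply Lemma \ref{ch2lyapunovcone} to choose $\mu_1$ and $\mu_2$ with exponential moment whose supports generate $\Gamma_1$ and $\Gamma_2$, and such that the joint Lyapunov vector $(Liap(\mu_1), Liap(\mu_2))$ avoids the finite union of hyperplanes in the Weyl chamber of $G \times G$ associated with $W^*$ by Theorem \ref{ch2tr1}. Since a hyperplane in $\mathfrak{a} \times \mathfrak{a}$ restricts, for a fixed first coordinate, to an affine hyperplane in the second factor, this reduces to two successive applications of Lemma \ref{ch2lyapunovcone} (whose proof works equally well for affine hyperplanes, since the Jordan projection of a Zariski dense subgroup is not contained in any proper algebraic hypersurface of $\mathfrak{a}$). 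One can simultaneously arrange the genericity required by \cite[Theorem 1.1]{aoun} for the freeness statement, since the combined forbidden set is still a finite union of hyperplanes. Then Theorem \ref{ch2tr1}, applied to $(G \times G,\; \Gamma_1 \times \Gamma_2,\; W^*)$, gives $\p\bigl((S_{1,n}, S_{2,n}) \in W^*\bigr) \leq c^n$, while \cite[Theorem 1.1]{aoun} yields $\p\bigl(\langle S_{1,n}, S_{2,n}\rangle \text{ is not free}\bigr) \leq c^n$. A union bound delivers the announced estimate, and Borel-Cantelli upgrades it to the almost sure statement.

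The main obstacle I expect is the algebraic step producing $W^*$: it is essential that the covering of non-Zariski-dense pairs be a \emph{finite} union of proper algebraic subvarieties, so that the single-variety Theorem \ref{ch2tr1} can be invoked. A naive argument via Chevalley's theorem applied to every rational representation of $\mathbf{G}$ would only give a countable union, which a priori could exhaust $G \times G$ and be useless. Everything hinges on the classification result guaranteeing finitely many conjugacy classes of maximal closed proper algebraic subgroups of a semi-simple group.
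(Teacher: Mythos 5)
Your overall architecture matches the paper's: produce a single proper $\R$-subvariety $\mathcal{W}\subset \mathbf{G}\times\mathbf{G}$ containing every pair $(x,y)$ that fails to generate a Zariski dense subgroup, pick $\mu_1,\mu_2$ via Lemma~\ref{ch2lyapunovcone} so that the Lyapunov vector of $\mu_1\otimes\mu_2$ avoids the hyperplanes dictated by $\mathcal{W}$, and then apply the exponential escape result (Theorem~\ref{ch2tr1}/Theorem~\ref{ch2theo1}) to the product random walk, with a union bound against the freeness estimate from \cite{aoun}. The difference is in the source of $\mathcal{W}$: the paper simply quotes Breuillard's strong Tits alternative (Lemma~\ref{ch2strongtits}), which is precisely the statement that such a $\mathcal{W}$ exists, whereas you try to reconstruct it by hand from a classification of maximal proper closed subgroups. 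This is a genuinely different route, and it is fair to say that the substance of Breuillard's lemma is more or less what you are re-deriving.

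There is, however, a real gap in your construction of $W_i$. You assert that $W_i$ is a proper subvariety ``because not every pair can be simultaneously conjugated into $M_i$.'' Non-surjectivity of a morphism does not imply that the Zariski closure of its image is proper; a dominant morphism can easily fail to be surjective. What you actually need is that the morphism $\phi_i\colon G\times M_i\times M_i \to G\times G$, $(g,m,m')\mapsto (gmg^{-1}, gm'g^{-1})$, is \emph{not dominant}. This requires an argument, e.g.\ a dimension count: the fiber of $\phi_i$ over a point of its image contains a copy of $N(M_i)$ (replace $(g,m,m')$ by $(gh^{-1}, hmh^{-1}, hm'h^{-1})$ for $h\in N(M_i)$), so $\dim W_i \le \dim G + 2\dim M_i - \dim N(M_i) \le \dim G + \dim M_i < 2\dim G$ since $M_i$ is a proper subgroup. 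Without such a count your ``properness'' step is unjustified. Separately, the finiteness of conjugacy classes of maximal proper closed algebraic subgroups of $\mathbf{G}$ (including non-connected ones, and over $\R$) is a nontrivial classification theorem and should be cited explicitly; asserting it without reference leaves the argument incomplete. Both of these points are exactly what Lemma~\ref{ch2strongtits} packages cleanly, which is why the paper invokes it rather than reproving it. Your remark about Lemma~\ref{ch2lyapunovcone} extending to affine hyperplanes is also unnecessary: the forbidden locus coming from Theorem~\ref{ch2tr1} is a finite union of \emph{linear} hyperplanes in the Weyl chamber of $G\times G$, and the paper handles this by running the proof of Lemma~\ref{ch2lyapunovcone} directly on the Zariski dense subgroup $\Gamma_1\times\Gamma_2$ of $G\times G$, choosing a pair $(g_1,g_2)$ with joint Jordan projection off the hyperplanes and then perturbing to a product measure $\mu_1\otimes\mu_2$.
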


When $\mathbf{G}=\mathbf{SL_2}$, a stronger statement holds. It will follow immediately from our result in \cite{aoun}.

\begin{theo}\label{ch2generic2} Let $\Gamma_1,\Gamma_2$ be two Zariski dense subgroups of $SL_2(\R)$.
Then for any probability measures $\mu_1$ and $\mu_2$  with an exponential moment whose support generates respectively  $\Gamma_1$ and $\Gamma_2$,
 there exists
$c\in ]0,1[$ such that
 $$\p (\textrm{$\langle S_{1,n},S_{2,n}\rangle$ is  Zariski dense}) \geq 1-c^n$$  \end{theo}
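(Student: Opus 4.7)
The plan is to observe that in the very special case $\mathbf{G}=\mathbf{SL_2}$, Zariski density follows automatically from freeness, so that Theorem \ref{ch2generic2} reduces directly to the corresponding result in \cite{aoun} without having to invoke the machinery of Theorem \ref{ch2tr1} at all. Concretely, I would begin by recalling (as done implicitly before the statement of Theorem \ref{ch2tr}) that the proper algebraic subgroups of $\mathbf{SL_2}$ are, up to conjugation, either finite, or contained in the normalizer of the diagonal torus, or contained in a Borel subgroup; in every case they are virtually solvable. Consequently, a subgroup of $SL_2(\R)$ is Zariski dense if and only if it is non-elementary (i.e.\ not virtually solvable), which is the definition of non-elementary probability measure used in Theorem \ref{ch2tr}.

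The second step is the trivial but crucial remark that a subgroup of $SL_2(\R)$ which is free of rank $2$ cannot be virtually solvable, since a virtually solvable group is virtually metabelian and in particular contains no non-abelian free subgroup. Hence, for any two elements $g_1,g_2\in SL_2(\R)$,
\[
\langle g_1,g_2\rangle \text{ is free of rank }2 \ \Longrightarrow\ \langle g_1,g_2\rangle \text{ is non-elementary} \ \Longleftrightarrow\ \langle g_1,g_2\rangle \text{ is Zariski dense}.
\]
In particular, the event that $\langle S_{1,n},S_{2,n}\rangle$ is free of rank $2$ is contained in the event that $\langle S_{1,n},S_{2,n}\rangle$ is Zariski dense.

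The final step is to invoke the main result of \cite{aoun}: for any pair of probability measures $\mu_1,\mu_2$ with an exponential moment whose supports generate Zariski dense subgroups $\Gamma_1,\Gamma_2$ of $SL_2(\R)$ (in particular non-elementary subgroups), there exists $c\in\,]0,1[$ such that $\p\bigl(\langle S_{1,n},S_{2,n}\rangle \text{ is free of rank }2\bigr)\geq 1-c^n$ for all large $n$. Combining with the inclusion of events above yields $\p\bigl(\langle S_{1,n},S_{2,n}\rangle \text{ is Zariski dense}\bigr)\geq 1-c^n$, which is the claim. The only potential subtlety is measurability of the event, which is handled by the same type of argument invoked for Theorem \ref{ch2generic11} via Lemma \ref{ch2strongtits}; but here there is no genuine obstacle, since in rank one everything reduces to the freeness result already established.
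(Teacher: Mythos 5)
Your proposal is correct and follows essentially the same route as the paper: observe that in $SL_2(\R)$ Zariski density is equivalent to being non-elementary (not virtually solvable), note that a free group of rank $2$ cannot be virtually solvable, and then invoke the freeness-at-exponential-rate result of \cite{aoun}. Your write-up is in fact slightly more careful than the paper's, which states loosely that ``a free subgroup of $SL_2(\R)$ is always Zariski dense'' (false for cyclic free groups) where the relevant point, as you make explicit, is freeness of rank $2$.
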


\begin{remarque} Let us compare  Theorem \ref{ch2generic1} with Rivin's Theorem \ref{ch2genericrivin}.
The  advantage of our method is that it allows us to consider two elements at random and not a ``1.5 random subgroup'',
 which is crucial to solve Question \ref{ch2question2}.
 Furthermore,  we do not necessarily consider arithmetic groups, neither finitely generated groups:
 any   Zariski dense subgroup $\Gamma$ works. In addition to that, the statement shows that Zariski density is generic
 for a pair of random elements taken   in two groups $\Gamma_1$ and $\Gamma_2$ not necessarily equal. \\
  However, the big inconvenient is that our constants are not effective unlike Rivin's. Our result can be applied to prove the ``1.5 random subgroup'' but is
  less interesting than Rivin results since we don't know if the uniform probability measure on a finite symmetric generating of $SL_d(\Z)$
  works. \\
   %For instance, a symmetric probability measure on $SL_3(\R)$
   % have zero as the second Lyapunov exponent and hence do not fullfill (\star ).
 For $d=2$, Theorem \ref{ch2generic2} is more satisfying; there is no restrictions neither on $\mu_1$ nor $\mu_2$.
  \end{remarque}

\subsection{Proofs}
\begin{proof}[Proof of Theorem \ref{ch2generic2}]
A subgroup  of $SL_2(\R)$ is Zariski dense if and  only it is not virtually solvable. In particular, a free subgroup of $SL_2(\R)$ is
always Zariski dense. But in Theorem \cite[Theorem 2.11]{aoun}, we proved that with the same assumptions as in Theorem \ref{ch2generic2},
$\p(\textrm{$\langle S_{1,n},S_{2,n}\rangle$ is not free} )$ decreases exponentially fast.

\end{proof}

\begin{proof}[Proof of Theorem \ref{ch2generic1}]
The key point is the following
\begin{lemme}\cite[Lemma 6.8]{strongtits} Let $k$ be a field of characteristic zero, $\mathbf{G}$ be a semi-simple group defined over $k$,  $G=\mathbf{G}(k)$. Then there exists a proper
algebraic variety $\mathcal{W}$ of $\mathbf{G} \times \mathbf{G}$ defined over $k$ such that any pair of elements $x,y\in G$ generate a Zariski dense subgroup
unless $(x,y)\in \mathcal{W}(k)$. \label{ch2strongtits}\end{lemme}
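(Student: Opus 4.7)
The plan is to exhibit $\mathcal{W}$ as a union of two proper Zariski closed $k$-subvarieties of $\mathbf{G}\times\mathbf{G}$ which together contain every pair $(x,y)$ such that $\overline{\langle x,y\rangle}^{\mathrm{Zar}}\ne\mathbf{G}$. The two pieces correspond to the dichotomy $\dim\overline{\langle x,y\rangle}^{\mathrm{Zar}}\ge 1$ versus $\overline{\langle x,y\rangle}^{\mathrm{Zar}}$ is finite.

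For the positive-dimensional case, I parameterize proper connected algebraic subgroups of $\mathbf{G}$ by their Lie algebras. Let $\mathcal{L}$ be the closed $k$-subvariety of the disjoint union of Grassmannians $\bigsqcup_{1\le d<\dim\mathfrak{g}}\mathrm{Gr}_d(\mathfrak{g})$ parameterizing proper nontrivial Lie subalgebras of $\mathfrak{g}$ (the bracket-closure condition $[\mathfrak{h},\mathfrak{h}]\subset\mathfrak{h}$ is closed). Consider the incidence variety
\[
\mathcal{I}=\{(\mathfrak{h},x,y)\in\mathcal{L}\times\mathbf{G}^2:\mathrm{Ad}(x)\mathfrak{h}=\mathrm{Ad}(y)\mathfrak{h}=\mathfrak{h}\},
\]
which is Zariski closed in $\mathcal{L}\times\mathbf{G}^2$ and defined over $k$. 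Since $\mathcal{L}$ is projective, the projection $\pi:\mathcal{L}\times\mathbf{G}^2\to\mathbf{G}^2$ sends closed sets to closed sets (elimination theory), so $\mathcal{W}_1:=\pi(\mathcal{I})$ is a closed $k$-subvariety of $\mathbf{G}^2$. Any bad pair with $\mathbf{H}^\circ:=(\overline{\langle x,y\rangle}^{\mathrm{Zar}})^\circ$ positive-dimensional lies in $\mathcal{W}_1$: take $\mathfrak{h}=\mathrm{Lie}(\mathbf{H}^\circ)\in\mathcal{L}$, and observe that $x,y\in N_{\mathbf{G}}(\mathbf{H}^\circ)$ forces $\mathrm{Ad}(x)\mathfrak{h}=\mathrm{Ad}(y)\mathfrak{h}=\mathfrak{h}$.

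For the remaining case where $\langle x,y\rangle$ is finite, I invoke Jordan's theorem: every finite subgroup of $\mathbf{G}\hookrightarrow GL(V)$ (via a faithful $k$-rational representation) admits an abelian normal subgroup of index at most a constant $C=C(\mathbf{G})$. Hence $x^{C!}$ and $y^{C!}$ both lie in a common abelian subgroup, so $[x^{C!},y^{C!}]=1$. Define
\[
\mathcal{W}_2:=\{(x,y)\in\mathbf{G}^2:[x^{C!},y^{C!}]=1\},
\]
a closed $k$-subvariety. Setting $\mathcal{W}:=\mathcal{W}_1\cup\mathcal{W}_2$, we have a closed $k$-subvariety of $\mathbf{G}^2$ containing every non-Zariski-dense pair. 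To verify $\mathcal{W}\ne\mathbf{G}^2$ (the properness), I exhibit a Zariski dense pair outside $\mathcal{W}$: the group $\mathbf{G}(k)$ is Zariski dense in $\mathbf{G}$ (connected semisimple groups are rational over any infinite field in characteristic zero), and Tits' alternative provides a Zariski dense free subgroup $\langle x_0,y_0\rangle\le\mathbf{G}(k)$ on two generators; such a pair satisfies $[x_0^{C!},y_0^{C!}]\ne 1$ and cannot preserve any proper Lie subalgebra.

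The main obstacle lies in the semisimple but non-simple case: any proper nontrivial \emph{ideal} $\mathfrak{h}_0\trianglelefteq\mathfrak{g}$ is automatically $\mathrm{Ad}(\mathbf{G})$-invariant, so $(\mathfrak{h}_0,x,y)\in\mathcal{I}$ for all $(x,y)$, which would force $\mathcal{W}_1=\mathbf{G}^2$. The remedy is to remove the (finitely many) ideal points from $\mathcal{L}$ --- they correspond to the sub-sums $\bigoplus_{i\in I}\mathfrak{g}_i$ in the decomposition of $\mathfrak{g}$ into simple summands --- and then compensate by enlarging $\mathcal{W}_2$: for each non-empty proper $I\subsetneq\{1,\ldots,r\}$, add the closed condition $[p_{I^c}(x)^{C!},p_{I^c}(y)^{C!}]=1$, where $p_{I^c}:\mathbf{G}\to\mathbf{G}_{I^c}$ projects to the complementary simple factors; this covers pairs whose image in $\mathbf{G}_{I^c}$ is finite, which is precisely the case where $\mathbf{H}^\circ$ is an ideal complementary to $I^c$. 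The verification that no Zariski dense pair ends up in the enlarged $\mathcal{W}$ then follows from the fact that a Zariski dense pair projects to a Zariski dense (hence infinite and non-virtually-abelian) pair in each quotient $\mathbf{G}_{I^c}$.
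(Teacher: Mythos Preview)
The paper does not prove this lemma at all; it simply quotes it from Breuillard's preprint \cite{strongtits} and uses it as a black box in the proof of Theorem~\ref{ch2generic1}. So there is no ``paper's own proof'' to compare against, and your attempt should be judged on its own merits.

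Your strategy (parameterize positive-dimensional subgroups by Lie subalgebras in a Grassmannian, project the incidence variety, and handle finite subgroups via Jordan's theorem) is one of the standard routes to this result, and the simple case is essentially correct. In the semisimple non-simple case, however, two points need more care.

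First, when you ``remove the finitely many ideal points from $\mathcal{L}$'', you lose projectivity of the parameter space a priori, and hence the closedness of $\pi(\mathcal{I}')$ is no longer automatic from elimination theory. This step can in fact be salvaged: a short deformation computation shows that each proper ideal $\mathfrak{h}_0=\bigoplus_{i\in I}\mathfrak{g}_i$ is an \emph{isolated} point of the scheme of Lie subalgebras. Indeed, the Zariski tangent space to $\mathcal{L}_d$ at $\mathfrak{h}_0$ is the space of cocycles $Z^1(\mathfrak{h}_0,\mathfrak{g}/\mathfrak{h}_0)$; since $\mathfrak{h}_0$ is an ideal the adjoint action on $\mathfrak{g}/\mathfrak{h}_0$ is trivial, so the cocycle condition forces $\phi$ to vanish on $[\mathfrak{h}_0,\mathfrak{h}_0]=\mathfrak{h}_0$. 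Thus $\mathcal{L}'=\mathcal{L}\setminus\{\text{ideals}\}$ is a union of connected components of $\mathcal{L}$, hence still projective, and your $\mathcal{W}_1$ is closed. You should include this argument.

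Second, your verification of properness has a genuine gap. You claim that a Zariski dense pair $(x_0,y_0)$ lies outside each $\mathcal{W}_{2,I}$ because its projection to $\mathbf{G}_{I^c}$ is ``non-virtually-abelian'', hence $[p_{I^c}(x_0)^{C!},p_{I^c}(y_0)^{C!}]\neq 1$. But this implication fails: two elements can generate a Zariski dense subgroup of a semisimple group while their $N$-th powers commute (e.g.\ if one of them has finite order dividing $N$), and nothing in ``free and Zariski dense in $\mathbf{G}$'' forces each projection to be free or torsion-free. The clean fix is to abandon the search for a single pair avoiding all pieces simultaneously: since $\mathbf{G}\times\mathbf{G}$ is irreducible, it suffices to show that each of $\mathcal{W}_1$, $\mathcal{W}_2$, and the finitely many $\mathcal{W}_{2,I}$ is individually a proper closed subvariety, and for each piece separately an avoiding pair is easy to exhibit (for $\mathcal{W}_{2,I}$, lift a Tits free pair from $\mathbf{G}_{I^c}$).
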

%We will use also the following lemma. Recall that for every probability measure $\mu$ on $G$, one can associate
%the Lyapunov vector $Liap(\mu)$ in the interior of the Weyl chamber $\mathfrak{a}$ (see \ref{guimo}).

  %  We will write $\mathcal{V}$ for $\mathcal{V}(\R)$.
By Lemma \ref{ch2lemma}, there exist a rational real representation $(\rho,V)$ of $\mathbf{G} \times \mathbf{G}$, an endomorphism $A\in End(V_1) \oplus \cdots \oplus End(V_r)$ such that \begin{equation}\label{ch2lkop}\mathcal{W}=\{(g,h)\in \mathbf{G}\times \mathbf{G};\;Tr\left(\rho(g,h)A\right) =0\}\end{equation} Let $\rho_1, \cdots, \rho_r$ the irreducible sub-representations of $\rho$. Since $\Gamma_1 \times \Gamma_2$ is Zariski dense in $\mathbf{G}\times \mathbf{G}$, the  proof of Lemma \ref{ch2lyapunovcone} shows that there exist two probability measures $\mu_1$ and $\mu_2$ respectively on $\Gamma_1$ and $\Gamma_2$, a permutation $\sigma$ of $\{1, \cdots, r\}$ such that $L_{\rho_{\sigma(i)}(\mu_1 \otimes \mu_2)} > L_{\rho_{\sigma(i+1)}(\mu_1 \otimes \mu_2)}$ for $i=1, \cdots, r$.
Let $T_n$ be the random walk $(S_{1,n},S_{2,n})$ on $\Gamma_1\times \Gamma_2$
(i.e. the one corresponding to the probability measure $\mu_1 \otimes \mu_2$.)
By Lemma \ref{ch2strongtits} and identity (\ref{ch2lkop}),

\begin{equation}\p (\textrm{$\langle S_{n,1},S_{n,2}\rangle$ is not Zariski dense in $G$}) \leq \p \Big(Tr\left( \rho(T_n) A \right)=0 \Big)\end{equation}

Theorem \ref{ch2theo1} shows that the latter quantity decreases exponentially fast to zero.
\end{proof}

\section{Open problems and questions}

\begin{itemize}
\item   It is interesting to see if the probabilistic methods we used can generalize Theorem \ref{ch2tr1}. More precisely, if $\mu$ is a probability measure  with an exponential moment and whose support generates a Zariski dense subgroup of the real points of a semi-simple algebraic group $\mathbf{G}$, is it true that for every proper algebraic subvariety $\mathcal{V}$ of $\mathbf{G}$, $$\limsup\big[ \p (S_n \in \mathcal{V})\big]^{\frac{1}{n}}< 1$$
      where  $S_n$ the random walk associated to $\mu$.

\item The same question for Theorem \ref{ch2generic1} (i.e. replace there exists by for all, and do not assume the semi-simple algebraic group $\mathbf{G}$ $\R$-split.)
\end{itemize}

\def\cprime{$'$} \def\cprime{$'$}

\end{document}